\newtheorem{Thm}{Theorem}[section]
\newtheorem{Prop}[Thm]{Proposition}
\newtheorem{Cor}[Thm]{Corollary}
\newtheorem{Lem}[Thm]{Lemma}
\theoremstyle{definition}
\newtheorem*{definition}{Definition}
\newtheorem*{remark}{Remark}
\numberwithin{equation}{section}
\begin{document}

\newcommand{\R}{\mathbb{R}}
\newcommand{\Z}[0]{\mathbb{Z}}
\newcommand{\Q}[0]{\mathbb{Q}}
\newcommand{\F}[0]{\mathbb{F}}
\newcommand{\N}[0]{\mathbb{N}}
\renewcommand{\O}[0]{\mathcal{O}}
\newcommand{\p}[0]{\mathfrak{p}}
\newcommand{\m}[0]{\mathrm{m}}
\newcommand{\Tr}{\mathrm{Tr}}
\newcommand{\Hom}[0]{\mathrm{Hom}}
\newcommand{\Gal}[0]{\mathrm{Gal}}
\newcommand{\Res}[0]{\mathrm{Res}}
\newcommand{\id}{\mathrm{id}}
\newcommand{\mult}{\mathrm{mult}}
\newcommand{\adm}{\mathrm{adm}}
\newcommand{\tr}{\mathrm{tr}}
\newcommand{\pr}{\mathrm{pr}}
\newcommand{\Ker}{\mathrm{Ker}}
\newcommand{\ab}{\mathrm{ab}}
\newcommand{\triv}{\mathrm{triv}}
\newcommand{\alg}{\mathrm{alg}}
\newcommand{\ur}{\mathrm{ur}}
\newcommand{\Coker}{\mathrm{Coker}}
\newcommand{\Aut}{\mathrm{Aut}}
\newcommand{\Ext}{\mathrm{Ext}}
\newcommand{\Iso}{\mathrm{Iso}}
\newcommand{\GL}{\mathrm{GL}}
\newcommand{\Fil}{\mathrm{Fil}}
\newcommand{\an}{\mathrm{an}}
\renewcommand{\c}{\mathcal }
\newcommand{\crys}{\mathrm{crys}}
\newcommand{\st}{\mathrm{st}}
\newcommand{\Map}{\mathrm{Map}}
\newcommand{\Sym}{\mathrm{Sym}}
\newcommand{\Spec}{\mathrm{Spec}}
\newcommand{\Frac}{\mathrm{Frac}}
\newcommand{\LT}{\mathrm{LT}}
\newcommand{\Alg}{\mathrm{Alg}}
\newcommand{\To}{\longrightarrow}
\newcommand{\Aug}{\mathrm{Aug}}
\newcommand{\wt}{\widetilde}
\newcommand{\op}{\mathrm}
\newcommand{\Ad}{\op{Ad}}
\newcommand{\ad}{\op{ad}}
\newcommand{\fr}{\mathfrak}

\title[Ramification via deformations]
{Ramification filtration via deformations, II}
\author{Victor Abrashkin}
\address{Department of Mathematical Sciences, 
Durham University, Science Laboratories, 
South Rd, Durham DH1 3LE, United Kingdom \ \&\ Steklov 
Institute, Gubkina str. 8, 119991, Moscow, Russia
}
\email{victor.abrashkin@durham.ac.uk}
\date{21.05.2025}
\keywords{local field, Galois group, ramification filtration}
\subjclass[2010]{11S15, 11S20}

\begin{abstract} Let $\mathcal K$ be a field of 
formal Laurent series 
with coefficients in a finite field of 
characteristic $p$. For $M\in\mathbb{N}$, let 
$\mathcal G_{<p,M}$ be the maximal quotient of 
the Galois group of $\c K$ 
of period $p^M$ and 
nilpotent class $<p$ and let 
$\{\mathcal G_{<p,M}^{(v)}\}_{v\geqslant 0}$ be 
the filtration by ramification subgroups in upper numbering.  
We use the identification $\mathcal G_{<p,M}=G(\mathcal L)$ of 
nilpotent Artin-Schreier theory: here 
$G(\mathcal L)$ is the group obtained from a 
suitable profinite Lie 
$\mathbb{Z}/p^M$-algebra $\mathcal L$ via the 
Campbell-Hausdorff composition law. 
We develop new techniques to obtain a 
``geometrical'' construction of the ideals 
$\mathcal L^{(v)}$ such that 
$G(\mathcal L^{(v)})=\mathcal G_{<p,M}^{(v)}$. 
Given $v_0\geqslant 1$, we construct a 
decreasing central filtration 
$\mathcal L(w)$, $1\leqslant w\leqslant p$, on $\c L$, an 
epimorphism of 
Lie $\mathbb{Z}/p^M$-algebras 
$\bar{\mathcal V}:
\bar{\mathcal L}^{\dag }\longrightarrow 
\bar{\mathcal L}:=\mathcal L/\mathcal L(p)$, and 
a unipotent action 
$\Omega $ of $\mathbb{Z} /p^M$  
on $\bar{\mathcal L}^{\dag }$, which induces the identity 
action on $\bar{\mathcal L}$. 
Suppose  
$d\Omega =B^{\dag }$, where 
$B^{\dag }\in\operatorname{Diff}\bar{\mathcal L}^{\dag }$, 
and 
 $\bar{\mathcal L}^{\dag [v_0]}$ is the ideal of 
$\bar{\mathcal L}^{\dag }$ generated 
by the elements of $B^{\dag }(\bar{\mathcal L}^{\dag })$. 
Our main result states that the ramification ideal 
$\mathcal L^{(v_0)}$ appears as the preimage of 
the ideal in $\bar{\mathcal L}$ 
generated by $\bar{\mathcal V}B^{\dag }(\bar{\mathcal L}^{\dag [v_0]})$. 
In the last section we apply this to   
the explicit construction of generators of 
$\bar{\mathcal L}^{(v_0)}$. The paper justifies a geometrical origin 
of ramification subgroups of $\Gamma _{\c K}$ 
and can be used for further developing of 
non-abelian local class field theory.
\end{abstract}
\maketitle

\section*{Introduction} \label{S0} \ \

Everywhere in the paper $p$ is a prime number. If $K$ is a field 
we denote by $K_{sep}$ its separable closure and set  
$\Gamma _K=\op{Gal}(K_{sep}/K)$.

\subsection{Motivation} \label{S0.1} 
We say that $K$ is cdvf if it is a complete discrete 
valuation field with finite residue field of characteristic $p$. 
Then the structure of the group 
$\Gamma _K$, cf.\,\cite{JW, Se3}, can be explicitly described. 
As a matter of fact, it depends only on $\op{char}K$ and 
the torsion subgroup of 
$K^*$. 
In particular, it is very weak invariant of $K$. Note that 
in the case of algebraic number fields $E$ the situation is quite 
opposite. Here by the Neukirch-Uchida theorem, the functor 
$E\mapsto \op{Gal}(E_{sep}/E)$ is 
fully faithful \cite{Ne}. In other words, $\Gamma _E$ is an absolute 
invariant of 
the field $E$. 

For a cdvf $K$, consider the decreasing filtration by normal 
ramification subgroups $\{\Gamma _K^{(v)}\}_{v\geqslant 0}$ in $\Gamma _K$, \cite{Se1}. 
This (ramification) filtration allows us to recover arithmetic invariants  
(say, discriminants, differentes, etc) 
of subextensions $L$ of $K$ in $K_{sep}$. The importance of 
ramification filtration 
has been never doubted because this is 
highly non-trivial number theoretic interpretation of the 
fundamental geometric concept of ramification. 
However, during very long time the structure of ramification 
subgroups $\Gamma _K^{(v)}$ was known just modulo the subgroup of 
second commutators of $\Gamma _K$. (In other words, we knew only the images of 
$\Gamma _K^{(v)}$ in the maximal abelian quotient 
$\Gamma _K^{ab}$ of $\Gamma _K$ 
given by local class field theory.) 
Only in \cite{Mo} Mochizuki    
proved that in the mixed characteristic case the group 
$\Gamma _K$ together with additional structure 
given by its ramification filtration behaves 
like a Galois group of algebraic number field. In other words, if 
$K$ is a cdvf of characteristic 0 then 
the functor 
$K\mapsto (\Gamma _K, \{\Gamma _K^{(v)}\}_{v\geqslant 0})$ is fully faithful. 
This gave the 
first formal confirmation of the importance of ramification theory in the 
number theoretic setting. 
As a result, the ramification filtration should be not anymore 
treated just as 
a technical concept,  
it is  a strict invariant of $K$ and, as such, 
requires very careful and 
detailed study. Notice that Mochizuki's proof is a spectacular application of the theory of 
$p$-adic representations; at the same time his proof is highly non-efficient. 

It is well-known  that the group structure of 
$\Gamma _K$ can be easily recovered 
from its maximal 
pro-$p$-quotient $\Gamma _K(p)=\op{Gal} (K(p)/K)$, where 
$K(p)$ is the maximal $p$-extension of $K$ in $K_{sep}$, cf.\,\cite{JW}. 
This group is provided with induced ramification filtration 
$\{\Gamma _K(p)^{(v)}\}_{v\geqslant 0}$. 
In 
\cite{Ab1, Ab2, Ab3} the author 
obtained first explicit description (at substantially non-abelian level) 
of the subgroups $\Gamma _K(p)^{(v)}$ modulo the subgroup 
$C_p(\Gamma _K(p))$ 
of commutators of order $\geqslant p$ in $\Gamma _K(p)$. 
This approach systematically used the equivalence of 
the categories of $p$-groups and 
Lie $\Z _p$-algebras with nilpotent class $<p$. It is based on 
new techniques which allowed us 
to work with ramification properties in terms of the appropriate Lie algebras. These results already found the following applications. 

\subsubsection{Mixed characteristic case}  \label{S011}
In \cite{Ab12, Ab13, Ab14} we used the Fontaine-Wintenberger 
field-of-norms functor \cite{Wtb1} 
to extend the approach from \cite{Ab1, Ab2, Ab3} 
to the case of cdvf's $K$ of mixed characteristic 
under the assumption that $K$ contains non-trivial 
$p$-th roots of unity.  In particular, suppose that  
$K$ contains a primitive $p^M$-th root of unity, $M\geqslant 1$, and let 
$K(s,M)$ be the maximal $p$-extension of $K$ in $K(p)$ with the 
Galois group of period $p^M$ and nilpotent class $s<p$. Then we obtained explicit value for the number $v(s,M)$ (the last jump) such that $\op{Gal}(K(s,M)/K)^{(v)}=e$ iff $v>v(s,M)$.

\subsubsection{Local analogue of the Grothendieck Conjecture} \label{S012} 

In \cite{Ab6, Ab11} we obtained by essentially different method 
quite efficient generalisation of Mochizuki's result \cite{Mo} 
in the context of $p$-extensions for cdvf $K$ of any characteristic. 
Namely, if $K$ is a cdvf then the functor $K\mapsto 
(\Gamma _K(p), \{\Gamma _K(p)^{(v)}\}_{v\geqslant 0})$ is fully faithful. 
If $p\geqslant 3$ the proof of the equicharacteristic case 
in \cite{Ab6} uses explicit description of the ramification subgroups 
$\Gamma _K(p)^{(v)}$ modulo third commutators 
$C_3(\Gamma _K(p))\supset C_p(\Gamma _K(p))$. Then 
the mixed chracteristic case is deduced via 
application of the field-of-norms functor. 
In the case $p=2$ our approach  
does not work because modulo 
$C_p(\Gamma _K(p))=C_2(\Gamma _K(2))$ we are just 
in the abelian situation where the ramification filtration 
carries no essential information. 
However, in \cite{Ab11} we studied the images of ramification filtration 
in the Magnus algebra associated with $\Gamma _K(2)$ modulo 
subgroup $C_3(\Gamma _K(2))$ of third commutators. This new 
techniques allowed us to prove the case $p=2$ in the both 
(equicharacteristic and mixed characteristic) cases. Our method even worked in 
the case of higher local fields, cf.\,\cite{Ab8, Ab10}.
\medskip 

\subsubsection{``Small'' $p$-adic representations} \label{S013}  

Suppose $V$ is a finite $\Z _p$-module and $\pi :\Gamma _K\To GL_{\Z _p}(V)$ 
is a group homomorphism. Then we have a finite field extension 
$K_{\pi }:=K_{sep}^{\Ker \pi }$ of $K$ and arithmetic properties 
of $K_{\pi }/K$ depend on the images 
$\pi (\Gamma _K^{(v)})=\op{Gal}(K_{\pi }/K)^{(v)}\subset GL_{\Z _p}(V)$ 
of subgroups $\Gamma _K^{(v)}$ of $\Gamma _K$.
\medskip 

{\sc Question.} How to describe the subgroups 
$\pi (\Gamma _K^{(v)})\subset GL_{\Z _p}(V)$? 
\medskip

In \cite{Ab16} we introduced the concept of ``small'' $p$-adic 
representation. By definition, $\pi $ is {\sc small} if for the wild inertia subgroup 
$\tilde{I}_K$ of $\Gamma _K$ it holds 
$\pi (\tilde{I}_K)=\exp (L)$, where $L$ is a Lie subalgebra in 
$\op{End}_{\Z _p} V$ such that $L^p=0$. We proved that if  
$\op{char}K=p$  and 
$\pi $ is {\sc small} then all images $\pi (\Gamma _K^{(v)})\subset GL_{\Z _p}(V)$ can be 
uniquely recovered from explicitly constructed 
differential form on Fontaine's 
$\varphi $-module $\c M$ 
associated with $\pi $. The construction of such differential form 
is purely geometric. It is related to the canonical connection 
$\nabla $ on $\c M$ and its preimages $\varphi ^{*N}(\nabla)$ on 
$\c M\otimes _K K_{rad}$, 
where $K_{rad}$ is the radical closure of $K$ in $K_{sep}$. In particular, this construction 
gives quite unexpected relation of the 
number-theoretic ramification theory 
to a very popular theory of $\c D$-modules, lifts of Frobenius, etc.  

\subsubsection {Non-abelian explicit reciprocity laws (ERL)} \label{S014}

Suppose that $V$ is a finite cyclic $\Z _p$-module. 
If $V=(\Z /p^{M+1})v^*$ then 
$\op{Gal}(K_{\pi}/K)\subset (\Z /p^{M+1})^{\times }$ and 
$\pi $ factors as 
$$\Gamma _K\To \Gamma _K^{ab}\To 
\op{Gal}(K_{\pi }/K)\subset (\Z /p)^{\times }\times 
\Z /p^{M}\, .$$
Since $K_{\pi }/K$ is abelian all $\op{Gal}(K_{\pi }/K)^{(v)}$ come from 
$\Gamma _K^{ab(v)}$ which are well-known by local 
class field theory. Then the corresponding 
images of $\Gamma _K^{(v)}$ can be found via one-or-another 
ERL for the Hilbert symbol. 
For example, suppose   
$K=k((t_0))$ contains a 
primitive $p^{M}$-th root of unity and $[K_{\pi }:K]=p^M$ 
then $K_{\pi }=K(\root {p^{M}}\of {v_{\pi }})$, 
$v_{\pi }\in K^*$. 
So the images $\pi (\Gamma _K^{(v)})$ can be calculated via calculations  
with the $M$-th Hilbert symbols $(u,v_{\pi})$,  
where $u\equiv 1\,\op{mod}\,t_0^{v}$.  
The study of different ERL's for the Hilbert symbol (and its analogs) 
is one of most popular  
areas of Algebraic Number Theory. It has very rich history \cite{Le} 
started with Kummer's work in 1859. All known ERL's have 
more-or-less similar structure: the answer appears 
as a residue of a suitable explicitly constructed differential form. 
As example, we can refer to  
fundamental contributions of  
Shafarevich \cite{Sh}, Artin-Hasse \cite{AH}, 
Iwasawa \cite{Iw}, Br\" uckner-Vostokov \cite{Br, Vo}, 
Witt \cite{Fo}, Coates-Wiles \cite{CW}, et al.  
If $V$ from above Subsection 
\ref{S013} is not cyclic then one-or-another 
non-abelian version of the Hilbert 
symbol and the corresponding ERL's could be used to 
study a similar question in full generality. 
Such symbol should use as essential ingredients nilpotent versions of 
local class field theory and appropriate versions of  
Kummer or Artin-Schreier theory.  
The idea of creating nilpotent class field theory was very popular 
for a very long time, e.g.\,\cite{Sh, KdS, KKL}. Recently, 
F.\,Laubie \cite{LF1}  
obtained such theory by 
generalizing the approach from \cite{KdS}. It is based on 
the idea of iteration of Lubin-Tate theories via the 
field-of-norms functor. This theory posseses 
many nice properties but is not adjusted for obtaining 
(even very well-known) explicit information about  
involved Galois groups. A nilpotent analog of the Artin-Schreier theory was constructed 
by the author in \cite{Ab1, Ab2}; it allowed us to obtain 
new results about ramification filtration in \cite{Ab1, Ab3}. Introduction of 
nilpotent analogue of Hilbert symbol could be stated now as the question about linking  
the Laubie 
and our theories. This will allow us to benefit from 
functorial properties of the first  
and explicit constructions of the second theories. 
Notice that the case of ``small'' representations \cite{Ab16} 
gives a positive feeling about existence of ERL's in the 
non-abelian context. 

\subsubsection{Geometrical approach to ramification filtration} \label{S015}
The above 
applications not only illustrate a fundamental role of explicit results  
from \cite{Ab1, Ab2, Ab3} but also rise the question about their   
geometric origin and more structural approach.  
The proofs in \cite{Ab1} are reasonably technical but their 
modulo $p^M$-version in \cite{Ab2, Ab3} relies on heavy computations with 
enveloping algebras. 
When working with Lie algebras of nilpotent class $<p$ the most efficient and clear approach should use one-or-another 
deformation theory. Such theory was recently constructed in the case of 
Galois groups of period $p$ in \cite{Ab15} and this gave us a geometrical 
approach to \cite{Ab1}. 
In this paper we construct more general deformation theory which adjusted to 
the modulo $p^M$ situation. This theory  
not just gives the results of \cite{Ab2, Ab3} but also 
considerably simplifies the approach from \cite{Ab15}. 
We expect that this  
new techniques could be applied to develop the modulo $p^M$-case 
of results obtained in the mixed characteristic 
case in \cite{Ab12, Ab13}. 
Note that in \cite{Ab15} we gave also an explicit description of arbitrarily long  
segments of ramification filtration (not just of 
a single ramification subgroup) in the case of 
Galois groups of period $p$. 
This description admits a straight generalisation to the modulo 
$p^M$-situation. As a result, we have a substantial control 
on explicit information about arithmetical structure of 
Galois groups of cdvf's. We also expect the existence of other 
fundamental relations between Arithmetic and Geometry of cdvf's.

\subsection{The content of the paper} \label{S0.2} 

Everywhere in the paper $M\in\N $ and $v_0\in\R _{>0}$ are fixed. 
For $s\in\N $ and a topological 
group $G$, we denote by $C_s(G)$ the closure of the subgroup of 
$G$ generated by commutators of order $\geqslant s$. 
If $L$ is a toplogical 
Lie algebra then $C_s(L)$ is the closure of the ideal generated by 
commutators of order $\geqslant s$. 
For any topological $A$-modules $M$ and $B$, we use the notation  
$M_B:=M\hat\otimes _AB$. 
\medskip 

Let $\c K$ be a cdvf and  
let $\c K_{<p,M}$ be a maximal $p$-extension of $\c K$ 
with the Galois group 
$\Gal (\c K_{<p,M}/\c K):=\c G_{<p,M}$ 
of nilpotence class $<p$ and exponent $p^M$. 
If $L$ is a Lie $\Z /p^M$-algebra then $G(L)$ is the 
group obtained from the elements of 
$L$ via the Campbell-Hausdorff composition law $\circ $, cf.\,Sect.\,\ref{S1.2}. 

Consider a decreasing filtration by ramification 
subgroups in upper numbering
$\{\c G^{(v)}_{<p,M}\}_{v\geqslant 0}$ of $\c G_{<p,M}$. 
In \cite{Ab15} we constructed a deformation theory which 
allowed us to retrieve (in essentially geometrical way) 
the results from \cite{Ab1}. 
In this paper, we obtain a 
generalisation (and also a considerable 
simplification) 
of our approach from \cite{Ab15} to the case of 
Galois groups $\c G_{<p,M}$ with arbitrary exponent $p^M$. 
We also explain how this theory can be applied to recover the structure of 
the ramification subgroups $\c G_{<p,M}^{(v)}$. 
Our approach could be overviewed as follows. 
\medskip 

a) Let $\c K=k((t_0))$, where $k\simeq\F _{p^{N_0}}$. Then we can use the uniformiser $t_0$ 
to define the lifts modulo 
$p^M$, $O_M(\c K)=W_M(k)((t))\subset W_M(\c K)$ and 
$O_M(\c K_{sep})=W_M(\sigma ^{M-1}\c K_{sep})[t]\subset W_M(\c K_{sep})$. Here $W_M$ is 
the functor of Witt vectors of length $M$ and  
$t=[t_0]\in W_M(\c K)$ is the Teichmuller representative  
of $t_0$. This allows us to apply the nilpotent version of 
Artin-Shreier theory, cf.\,\cite{Ab2},  to fix an 
identification $\eta ^0:\c G_{<p,M}\simeq G(\c L)$, where $\c L$ is 
explicitly constructed  pro-finite 
Lie $\Z /p^M$-algebra. The identification $\eta ^0$ 
appears via choosing 
a suitable $e\in\c L_{O_M(\c K)}$, taking  
$f\in\c L_{O_M(\c K_{sep})}$ such that $\sigma (f)=e\circ f$ and setting 
for any 
$\tau\in\c G_{<p,M}$, $\eta ^0(\tau )=(-f)\circ \tau (f)\in G(\c L)$. 
This provides us also with 
``ramification'' ideals $\c L^{(v)}$ in $\c L$ such that 
$\eta ^0(\c G_{<p,M}^{(v)})=\c L^{(v)}$. 
\medskip 

b) The extension of scalars $\c L_{W(k)}$ 
appears with the set of  explicit 
generators $\{D_{an}\ |\ \op{gcd}(a,p)=1, n\in\Z /N_0\}\cup \{D_0\}$. 
We use them to define a decreasing filtration of ideals 
$\c L=\c L(1)\supset \c L(2)\supset\ldots \supset \c L(p)$ and provide 
$\bar{\c L}=\c L/\c L(p)$ with induced filtration 
$\bar{\c L}(w)$, $1\leqslant w\leqslant p$. 
The algebra $\bar{\c L}$ is considerably smaller than $\c L$;  
it is generated 
over $W(k)$ by all $D_{an}$ with $0\leqslant a<(p-1)v_0$. 
However,  
we prove that $\c L(p)\subset\c L^{(v_0)}$ and, as a result, 
it will be sufficient to 
describe the image $\bar{\c L}^{(v_0)}$ of $\c L^{(v_0)}$ in $\bar{\c L}$. 
\medskip 

c) Our next step is a construction of a new 
Lie $\Z /p^M$-algebra $\bar{\c L}^{\dag }$ 
together with projection $\bar{\c V}:\bar{\c L}^{\dag }\to \bar{\c L}$. As a 
matter of fact, there is a splitting of Lie algebras 
$\bar{\c L}^{\dag }=\Ker\bar{\c V}\oplus \bar{\c L}$. 
The construction of $\bar{\c L}^{\dag }$ depends on a construction of a 
special system of coefficients inside ${O_M(\c K)}$, cf. 
Sect.\,\ref{S2}. 
We just mention that this system specifies explicitly  the 
powers of $t$ which depend on 
a choice of a sufficiently large $N^*\in\N $ and a sufficiently close to 
$v_0$ rational number $r^*<v_0$. More precisely, if $q:=p^{N^*}$ then 
$r^*(q-1):=b^*$ is a natural number prime to $p$, and 
there are no ramification breaks of the ramification 
filtration $\c G^{(v)}_{<p,M}$ 
inside the interval $(r^*,v_0)$. 
(The ramification breaks of the filtration $\c G^{(v)}_{<p,M}$ 
are discrete because 
they are discrete for abelian extensions and 
$\c G_{<p,M}$ has finite nilpotence class.) Then the 
admissible exponents $t^{-\iota }$, $\iota\in\mathfrak{A}^0$, appear 
in the form 
$\iota =qp^{M-1}A-b^*U\in\mathfrak{A}^0$, where $A$ is a finite sum of various 
$p^na$ with $n\geqslant 0$ 
and $a\in [0,(p-1)v_0)$, and $U$ is a finite sum of $u_jp^{m_j}$ with 
$u_j,m_j\geqslant 0$ and $1\leqslant \sum _ju_j<p$. 
One of properties of such exponents $\iota\in\mathfrak{A}^0$ is that 
$\op{ch}(\iota ):=\sum _ju_j$ is an invariant of $\iota $. In these terms, 
working with the Lie algebra 
$\bar{\c L}$ inside $\bar{\c L}^{\dag }$ is related  
to $t$-exponents with $\op{ch}(\iota )=0$,  and working 
with $\Ker\bar{\c V}\subset \bar{\c L}^{\dag }$ corresponds to 
t-exponents with $\op{ch}(\iota )\geqslant 1$.
\medskip 

d) Let us come back to the identification 
$\eta ^0:\c G_{<p,M}\simeq G(\c L)$ from a) 
determined (up to conjugation) 
by a choice of a suitable $e\in\c L_{O_M(\c K)}$. Then 
the image $\bar e$ of $e$ under the projection 
$\c L_{O_M(\c K)}\to\bar{\c L}_{O_M(\c K)}$ determines the epimorphic map 
$\bar{\eta }^0:{\c G}_{<p,M}\to G(\bar{\c L})$. 
On the other hand, choose an analog $\bar e^{\dag }$ of $\bar e$ for $\bar{\c L}^{\dag }$ 
which is compatible with $\bar e$, i.e. 
such that $(\bar{\c V}\otimes\id )\bar e^{\dag }=\bar e$. 
This will allow us 
to obtain a unique lift 
$\bar\eta ^{\dag }:\c G_{<p,M}\to G(\bar{\c L}^{\dag })$ of 
$\bar{\eta }^0$ such that 
$\bar{\c V}\bar{\eta }^{\dag }=\bar{\eta }^0$. 
\medskip 

e) Note that $\bar{\c L}^{\dag }$ also could be 
provided with a decreasing filtration of length 
$p$ such that the projection 
$\bar{\c V}:\bar{\c L}^{\dag }\to \bar{\c L}$ is  
epimorphism of filtered Lie algebras. As a matter 
of fact, the filtration on $\bar{\c L}^{\dag }$ 
is defined in terms of invariant $\op{ch}(\iota )\in [0,p-1]$. 
This allows us to define the 
action $\Omega $ of the elements $\gamma\in\Z /p^M$ 
on the system of coefficients $\mathfrak{A}^0$ 
by the correspondences 
$t^{-qp^{M-1}a}\mapsto t^{-qp^{M-1}a}E(t^{b^*})^{a\gamma p^{M-1}}$, 
where $E$ is the Artin-Hasse exponential.
\medskip 

f) We use the action $\Omega $ to obtain a deformation of  
the element $\bar e^{\dag }$ to $\Omega ({\bar e}^{\dag })
\in\bar{\c L}^{\dag }_{O_M(\c K)}$. This element determines the deformation 
$\Omega (\bar{\eta }^{\dag })$ of $\bar{\eta }^{\dag }$ which 
is given by a unipotent automorphism $A^{\dag }$ of 
the Lie algebra $\bar{\c L}^{\dag }$. Equivalently, 
it appears from a 
differentiation $B^{\dag }$ of $\bar{\c L}^{\dag }$ such that 
$A^{\dag }=\wt{\exp}B^{\dag }$ (this is the truncated exponential). 
With this notation, our main result states that 
the ramification ideal 
$\bar{\c L}^{(v_0)}$ appears as 
$\bar{\c V}B^{\dag }(\Ker{\bar{\c V}})$. 
\medskip 

In the remaining part of the paper we are doing explicit 
calculations with specially chosen element $\bar e$ to 
find explicit generators of $\bar{\c L}^{(v_0)}$ 
in the style of paper 
\cite{Ab3}.
\medskip 

{\sc Acknowledgements.} The author is very grateful to the referee 
for encouraging comments, pointing out a few inexactitudes and suggesting several simplifications.

\section{Lie algebra $\c L$ and its filtration} \label{S1} 

In this section 
$k\simeq\F _{p^{N_0}}$, $W(k)$ is the corresponding ring of 
Witt vectors, $\sigma $ is the morphism of taking $p$-th powers  
in $k$. The corresponding morphism $W(\sigma )$ we will denote for 
simplicity just by $\sigma $. 

\subsection{Lie algebra $\c L$} \label{S1.1} 
Set $\Z ^+(p)=\{a\in\N\ |\ \op{gcd}\,(a,p)=1\}$ and  
$\Z ^0(p)=\Z ^+(p)\cup\{0\}$.  
Let  
$\widehat{\c L}_{W(k)}$ be a 
free Lie $W(k)$-algebra with the 
module of (free) 
generators    
\begin{equation}  \label{E1.1} 
\{D_0\}\cup\{D_{an}\ |\ a\in\Z ^+(p),  
n\in\Z/N_0\}\,.
\end{equation}

Define $\sigma $-action on $\widehat{\c L}_{W(k)}$ by setting  
$\sigma :D_{an}\mapsto D_{a,n+1}$ and $\sigma :D_0\mapsto D_0$. 
Then $\widehat{\c L}_{W(k)}|_{\sigma =\id}:=\widehat{\c L}$ 
is a free Lie $\mathbb{Z} _p$-algebra such that 
$\widehat{\c L}\otimes W(k)=\widehat{\c L}_{W(k)}$. 
We set $\c L:=\widehat{\c L}/C_p(\widehat{\c L})\otimes\Z /p^M$. 
This is the maximal quotient of $\widehat{\c L}$ of 
nilpotent class $<p$ and period $p^M$.

\subsection{Ideals $\c L(w)$, $w\geqslant 1$} \label{S1.2} Recall that we  
fixed $v_0\in\mathbb{R}_{>0}$. 

\begin{definition} For $m\in\Z _{\geqslant 0}$ and $s\in\N$ , let $U(s,m)$ be  
the minimal value of 
$1/p^{i_1}+\ldots +1/p^{i_{s}}$, where all 
$i_j$ run over $\Z _{\geqslant 0}$ and satisfy the condition 
$i_1+\dots +i_{s}=m$. By definition, set for any $m\geqslant 0$, $U(0,m)=0$. 
\end{definition}

Note that directly from definitions it follows that: 

-- $U(s,0)=s$ and for $s\geqslant 1$, $U(s,m)>0$;

-- if $s'>s$ then for any $m\geqslant 1$, $U(s',m)>U(s,m)$;

-- if $m'>m$ and $s\geqslant 1$ then 
$U(s,m')<U(s,m)$.
\medskip

\begin{Prop} \label{P1.1} 
Let $s,s',m,m'\in\Z _{\geqslant 0}$. Then:

{\rm a)} $U(s,m)+U(s',m')\geqslant U(s+s',m+m')\,.$
\medskip 

{\rm b)} For $s\geqslant 1$, let 
$m=su+s_1$, where $0\leqslant s_1<s$. 
Then  
$$U(s,m)=s_0/p^{u}+s_1/p^{u+1}\, \text{with } 
s_0=s-s_1\,.$$
In particular, $u, s_0,s_1$ are uniquely determined by $m$ and $s$. 
\medskip 

{\rm c)} For any $m^0, s, s'\geqslant 0$, 
$$\min\{U(s,m)+U(s',m')\ |\ m+m'=m^0\}=U(s+s',m^0)\,.$$
\end{Prop}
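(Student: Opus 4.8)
The plan is to prove the three parts essentially in order, since (c) will be deduced from (a) and (b). For part (a), I would argue directly from the definition: choosing optimal exponent tuples $(i_1,\dots,i_s)$ realizing $U(s,m)$ and $(i_1',\dots,i_{s'}')$ realizing $U(s',m')$, their concatenation is a tuple of length $s+s'$ whose exponents sum to $m+m'$, hence $1/p^{i_1}+\dots+1/p^{i_s}+1/p^{i_1'}+\dots+1/p^{i_{s'}'} \geqslant U(s+s',m+m')$ by minimality; the left side equals $U(s,m)+U(s',m')$. The only subtlety is the degenerate case $s=0$ or $s'=0$, which is handled by the convention $U(0,m)=0$ (and here concatenation is trivial). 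So (a) is immediate.

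For part (b), the content is that the minimizing tuple for $U(s,m)$ is as ``spread out'' as possible: the function $x\mapsto 1/p^x$ on $\Z_{\geqslant 0}$ is convex, so to minimize a sum of $s$ such terms with prescribed total $m$ of the arguments, one should make the arguments as equal as possible. Concretely I would show: if some optimal tuple has two entries $i_j, i_\ell$ with $i_j \geqslant i_\ell + 2$, then replacing them by $i_j - 1, i_\ell + 1$ strictly decreases the sum (since $p^{-(i_j-1)} + p^{-(i_\ell+1)} < p^{-i_j} + p^{-i_\ell}$ is equivalent to $(p - 1/p)(p^{-i_j} - p^{-i_\ell-1})$... — more cleanly, $p^{-i_\ell} - p^{-i_\ell-1} > p^{-i_j+1} - p^{-i_j}$ because $p^{-i_\ell} > p^{-i_j+1}$ when $i_j \geqslant i_\ell+2$, and multiply the obvious factor through). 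Hence in an optimal tuple all entries lie in $\{u_0, u_0+1\}$ for some $u_0$; if $s_1$ of them equal $u_0+1$ and $s_0 = s - s_1$ equal $u_0$, then $m = s u_0 + s_1$ with $0 \leqslant s_1 < s$, which pins down $u_0, s_0, s_1$ uniquely by the division algorithm, and the value is $s_0/p^{u_0} + s_1/p^{u_0+1}$ as claimed.

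For part (c), with $s + s' < p$, I want the minimum of $U(s,m) + U(s',m')$ over $m + m' = \mathcal M$ to be exactly $U(s+s', \mathcal M)$. The inequality ``$\geqslant$'' is (a). For ``$\leqslant$'', I would take the optimal tuple for $U(s+s', \mathcal M)$, which by (b) has all entries in $\{u_0, u_0+1\}$, and split it into a sub-tuple of length $s$ and one of length $s'$; setting $m, m'$ to be the respective sums of exponents gives $U(s,m) + U(s',m') \leqslant$ (sum over the first block) $+$ (sum over the second block) $= U(s+s',\mathcal M)$. Here one must check the split can be done so that each block again has all entries in $\{u_0, u_0+1\}$ — automatic, any partition of a $\{u_0,u_0+1\}$-valued tuple does — so that the blocks are themselves optimal for their own parameters and the inequality in fact is equality on each block.

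The main obstacle, and the only place the hypothesis $s + s' < p$ is used, is ensuring in (c) that the sub-tuples obtained from splitting remain \emph{genuine} optimal configurations, i.e. that one does not accidentally need exponent $u_0$ appearing with multiplicity forcing a worse value; more precisely, the delicate point is that $U(s,m)$ as computed from the block equals the formula in (b) for that block's own $(s, m)$. One should verify that when the big tuple has $s_0' = (s+s') - s_1'$ copies of $u_0$ and $s_1'$ copies of $u_0+1$, any sub-tuple of length $s$ has between $\max(0, s - s_0')$ and $\min(s, s_1')$ copies of $u_0+1$, and all such splits are consistent with (b) applied to $(s,m)$; the bound $s + s' < p$ guarantees we stay in the regime where no further ``carry'' phenomenon (from $p$ copies of $1/p^{u}$ collapsing to one copy of $1/p^{u-1}$) interferes with the uniqueness in (b). I would isolate this as a short lemma on splitting balanced tuples and then (c) follows formally. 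I expect parts (a) and (b) to be genuinely routine, with essentially all the real work concentrated in this splitting/carry-avoidance check for (c).
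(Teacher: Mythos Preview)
Your proofs of (a) and (b) are correct and match the paper's exactly: (a) by concatenation of optimal tuples, (b) by the smoothing move $i_j \mapsto i_j - 1$, $i_\ell \mapsto i_\ell + 1$ whenever $i_j \geqslant i_\ell + 2$, reducing to a tuple with values in $\{u_0, u_0+1\}$ and then the division algorithm.

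For (c) your approach is genuinely different from the paper's. The paper fixes a minimizing pair $(m,m')$, expands both $U(s,m)$ and $U(s',m')$ via (b) as $s_0/p^i + s_1/p^{i+1}$ and $s_0'/p^{i'} + s_1'/p^{i'+1}$, and runs a case analysis on $i$ versus $i'$ (cases $c_1$, $c_{21}$--$c_{24}$), in each case either deriving a contradiction via the move $(m,m') \to (m-1,m'+1)$ or checking the formula for $U(s+s',m+m')$ directly. Your argument is cleaner: having ``$\geqslant$'' from (a), you exhibit a witness for ``$\leqslant$'' by splitting the optimal $(s+s')$-tuple into blocks of sizes $s$ and $s'$. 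This works, and the check you flag as the ``main obstacle'' is in fact immediate: a sub-tuple of a $\{u_0,u_0+1\}$-valued tuple is again $\{u_0,u_0+1\}$-valued, and any such tuple realizes the formula in (b) for its own $(s,m)$ (if it has $a>0$ copies of $u_0$ and $b$ copies of $u_0+1$ then $m = su_0 + b$ with $b<s$; if $a=0$ then $m = s(u_0+1)$ and (b) applies with the new base $u_0+1$). No appeal to $p$ is needed anywhere.

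So your diagnosis of where the hypothesis $s+s'<p$ enters is off: there is no ``carry'' phenomenon in your argument, and the uniqueness in (b) is purely the division algorithm, independent of $p$. Your proof of (c) in fact goes through for arbitrary $s,s'\geqslant 0$. The paper's case analysis invokes $s_0+s_1+s_0'+s_1'<p$ explicitly, but this is not essential there either; the hypothesis is stated only because it covers all applications in the paper. Your splitting argument is shorter and yields the slightly stronger statement for free.
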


\begin{proof}  
a) is obvious. 

b) Suppose $m_1,\dots ,m_s\in\Z _{\geqslant 0}$, 
$m_1+\ldots +m_s=m$ and 
$$U(s,m)=1/p^{m_1}+\cdots +1/p^{m_s}\, .$$
If for some indices $1\leqslant i,j\leqslant s$ 
it holds that $m_i\geqslant m_j+2$ 
then  we obtain the contradiction by the use of the obvious inequality 
$$1/p^{m_i-1}+1/p^{m_j+1}<1/p^{m_i}+1/p^{m_j}\, .$$ 
Therefore, the minimal value $U(s,m)$ can appear 
only if there is 
$u\in \mathbb{N}$ such that among all numbers $m_i$, 
$1\leqslant j\leqslant s$, there are 
$s_0>0$ of those which are equal to $u$ and all 
remaining $s_1=s-s_0$ 
numbers are equal to $u+1$.  
It remains to note that 
$m=m_1+\dots +m_s=us_0+(u+1)s_1=su+s_1\, .$

c) Suppose $U(s,m)+U(s',m')$ is the minimal value of 
the LHS. We can assume that 
$s,s'\geqslant 1$. Let $m=si+s_1$ and $m'=s'i'+s_1'$, 
where $0\leqslant s_1<s$ and 
$0\leqslant s_1'<s'$. Then for $s_0=s-s_1$ and $s_0'=s'-s_1'$, 
$U(s,m)=s_0/p^i+s_1/p^{i+1}$ and 
$U(s',m')=s_0'/p^{i'}+s_1'/p^{i'+1}$. 

$c_1$) Suppose $i=i'$. Then $U(s,m)+U(s',m')=U(s+s',m+m')$ 
and our statement is proved. 

$c_2$) Suppose $i>i'$.

$c_{21}$) Let $s_1\ne 0$ and $s_1'\ne s'-1$. In this case 
$m-1=si+(s_1-1)$ 
and $m'=s'i'+(s_1'+1)$ imply that 
\begin{equation} \label{E1.2}
U(s,m-1)+U(s',m'+1)<U(s,m)+U(s',m')
\end{equation}
because $U(s,m-1)=U(s,m)+(1/p^i-1/p^{i+1})$ and 
$U(s',m'+1)=U(s',m')-(1/p^{i'}-1/p^{i'+1})$.
\medskip 

$c_{22}$) Let $s_1=0$ but $s_1'\ne s'-1$. In this case 
$m-1=s(i-1)+(s-1)$ and 
$m+1=s'i'+(s'_1+1)$ imply that either we have again 
the contradiction \eqref{E1.2} 
or $i=i'+1$. But in the later case, 
$m+m'=(s+s')i'+s+s_1'$ and 
$$U(s+s', m+m')=\frac{s'}{p^{i'}}+\frac{s+s_1'}{p^{i'+1}}=
U(s,m)+U(s',m')\, .$$

$c_{23}$) Let $s_1\ne 0$ but $s_1'-1=s'$. Here $m-1=si+s_1-1$ 
and $m+1=(s'+1)i'$. Then 
$U(s,m-1)=U(s,m)+(1/p^i-1/p^{i+1})$ and 
$U(s',m'+1)=U(s',m')-(1/p^{i'}-1/p^{i'+1})$.
Then again we obtain contradiction \eqref{E1.2} because $i>i'$. 

$c_{24}$) Let $s_1=0$ and $s_1'=s'-1$. Here 
$m-1=s(i-1)+s-1$ and $m'+1=s'(i'+1)$. 
We obtain either contradiction \eqref{E1.2} or $i-1=i'$. 
In the later case 
$m+m'=s'i'+s+s'-1$ and $U(s+s',m+m')=U(s,m)+U(s',m')$.
\end{proof}

\begin{definition} Let  
$\{\c L(w)\}_{w\in\N }$ be the minimal (decreasing) 
filtration of ideals in $\c L$ such that: 

1) $\c L(1)=\c L$ and 
$\forall w_1,w_2$, 
$[\c L(w_1), \c L(w_2)]\subset \c L(w_1+w_2)$;

2) if  $a/v_0\geqslant U(w-1,m)$ then 
$p^mD_{an}\in\c L(w)_{W(k)}$. 
\end{definition} 

\begin{remark} Let $\hat{\c A}_{W(k)}$ be the enveloping algebra of 
$\hat{\c L}_{W(k)}$. This is free associative $W(k)$-algebra. 
It has (as a $W(k)$-module) a $W(k)$-basis consisted of 
all finite products of 
elements from the set \eqref{E1.1}. We can introduce a minimal 
decreasing 
filtration of ideals $\{\hat{J}(w)\}_{w\in\N }$ in 
$\hat{\c A}_{W(k)}$ such that

1) $\hat J(1)$ is the augmentation ideal of 
$\hat{\c A}_{W(k)}$ and for all $w_1,w_2$, 
$$\hat J(w_1)\hat J(w_2)\subset \hat J(w_1+w_2);$$
2) for all $D_{an}$ and $m\geqslant 0$, 
if $p^ma\geqslant v_0U(w-1,m)$ then 
$p^mD_{an}
\in\hat J(w)\,.$ 

It can be easily verified that we have the induced filtration 
on the Lie algebra $\hat{\c L}$ which induces our filtration 
$\{\c L(w)\}_{w\in\N }$ on $\c L$. 
 
\end{remark}

{\sc Example.} Suppose $m=0$ and $s\geqslant 1$. 
Then $u_0=s_1=0$ and for all $w\geqslant 1$, 
$U(w-1,0)=w-1$. 
If $(w-1)v_0\leqslant a< wv_0$ then 
$D_{an}\in\c{L}(w)_{W(k)}$. 
In particular, if $a\geqslant (p-1)v_0$ then 
$D_{an}\in\c L(p)_{W(k)}$. 
In the modulo $p$ situation 
from \cite{Ab15}   
the ideals $\c L(w)$ coincide with the ideals 
introduced in that paper (and 
denoted by $\c L(s)$). 
\medskip

Part b) of above proposition \ref{P1.1} implies the following property.

\begin{Prop} \label{P1.2}
In the above definition of the filtration $\{\c L(w)\}_{w\in\N }$ 
property 2) can be replaced by the following one:

$2')$  
for all $a\in\Z ^0(p)$ and  
integers $\alpha _0, \alpha _1,j\geqslant 0$, it holds 
\medskip 

$\bullet $\ $R_a(\alpha _0,\alpha _1,j)$:
  if 
$a/v_0\geqslant \alpha _0/p^{j}+\alpha _1/p^{j+1}$ then 
$$p^{\alpha _0j+\alpha _1 (j+1)}D_{an}
\in\c L(1+\alpha _0+\alpha _1)_{W(k)}\, .$$
\end{Prop}  

\begin{remark} 
 For $j\geqslant 1$, the relations $R_a(\alpha _0,0,j)$ 
 and $R_a(0,\alpha _0,j-1)$ coincide. 
\end{remark}

\subsection{Relations 
$p^{M(a,s)}D_{an}\in\c L(1+s)_{W(k)}$, $s<p$}
\label{S1.5}
\begin{definition} 
Given $a\in\Z ^0(p)$ and $0\leqslant s<p$, let $M(a,s)$ 
be the minimal non-negative integer such that 
$p^{M(a,s)}D_{an}\in\c L(1+s)_{W(k)}$. 
\end{definition}

\begin{remark} The restriction 
$s<p$ is introduced because we are going to work 
below with the Lie 
 algebra $\bar{\c L}=\c L/\c L(p)$ provided with 
 the filtration by 
 the ideals $\bar{\c{L}}(w)=\c L(w)/\c L(p)$. 
 \end{remark}
 \medskip

Clearly, $M(a,0)=0$ and for all $m$, 
$p^mD_{0}\notin\c L(2)_{W(k)}$. 
In addition, if $a/v_0\geqslant U(p-1,0)=p-1$ 
then $D_{an}\in\c L(p)_{W(k)}$.
Therefore,  planning to work later modulo $\c L(p)_{W(k)}$ 
we can assume below that $s\geqslant 1$ and 
$1\leqslant a< (p-1)v_0$. 
\medskip 

Introduce the integers $u(a)\geqslant 0$, $1\leqslant 
\kappa _0(a)<p$ and $0\leqslant \kappa _1(a)<p$ such that 
$$\kappa _0/p^{u}+
\kappa _1/p^{u+1}\leqslant 
a/v_0<\kappa _0/p^{u}+(\kappa _1+1)/p^{u+1}\, ,$$
where we set $u=u(a)$, $\kappa _0=\kappa _0(a)$ and 
$\kappa _1=\kappa _1(a)$ to simplify the notation. These 
integers are uniquely determined by $a<(p-1)v_0$.

Now relation $R_a(\alpha _0,\alpha _1 ,j)$ 
from Prop.\ref{P1.2} means that   
\begin{equation}\label{E1.4} 
\alpha _0/p^{j}+\alpha _1 /p^{j+1}
\leqslant\kappa _0/p^{u}+
\kappa _1/p^{u+1}\,,
\end{equation} 
implies  
$p^{\alpha _0j+\alpha _1(j+1)}D_{an}
\in\c L(1+\alpha _0+\alpha _1)_{W(k)}\,.$ 
\medskip 

Note that $\alpha _0+\alpha _1\geqslant p-1$ implies 
$p^{\alpha _0j+
\alpha _1(j+1)}D_{an}\in \c L(p)_{W(k)}$. 
If we want to obtain basic relations for the ideals 
$\c L(s)$ modulo $\c L(p)$ then 
it makes sense to restrict ourselves with the cases where 
$\alpha _0+\alpha _1 <p$. In particular, 
$\alpha _0/p^j+\alpha _1 /p^{j+1}$ will appear as a 
standard decomposition 
in base $p$ with $p$-digits $\alpha _0,\alpha _1 $. 
Now inequality \eqref{E1.4} 
is equivalent to one of the following options:
 \medskip 

-- $j\geqslant u+1$; 

-- $j=u$ and $\alpha _0<\kappa_0$; 

-- $j=u$, $\alpha _0=\kappa _0$ and 
$\alpha _1\leqslant \kappa _1$. 
\medskip 

Proceeding along these lines introduce for 
a given $a\in\Z ^+(p)$, 
the following sets 
of ``basic''  relations 
$R_a(\beta _0, \beta _1, u)$ (where $u=u(a)$) 
which provide the condition 
$p^{M(a,s)}D_{an}\in\c L(1+s)_{W(k)}$: 
\medskip 

1)\  $\c R_a^I=\{R_a(\beta _0, 0, u)\ |\ 1\leqslant\beta _0
\leqslant\kappa _0\}$; here $s=\beta _0$, $M(a,s)=su$;
\medskip 

2)\ $\c R_a^{II}=\{R_a(\kappa _0, \beta _1,u)\ |\ 
1\leqslant\beta _1\leqslant\kappa _1, 
\kappa _0+\beta _1\leqslant p-1\}$; 
here 
\newline 
$s=\kappa _0+\beta _1$,  
$M(a,s)=M(a,\kappa _0)+\beta _1(u+1)$;  
\medskip 

3)\ 
$\c R^{III}_a=\{R_a(\kappa _0-1, \kappa _1+2,u)
\ |\ \kappa _0+\kappa _1\leqslant p-2\}$; here 
$s=\kappa _0+\kappa _1+1$ and  
$M(a,s)=M(a,\kappa _0+\kappa _1)+u+2$;
\medskip 

4)\ $\c R^{IV}_a=\{R_a(\kappa _0-1, \kappa _1+2+t,u)\ |\ 
t\geqslant 1, \kappa _0+\kappa _1+1+t\leqslant p-1\}$; here 
$s=\kappa _0+\kappa _1+1+t$ and 
$M(a,s)=M(a,\kappa _0+\kappa _1+1)+t(u+1)$. 
\medskip

\begin{Prop} \label{P1.3} 
Given $a\in\Z ^+(p)$ the above listed 
relations imply all 
relations $R_a(\alpha _0,\alpha _1,j)$ modulo $\c L(p)_{W(k)}$. 
\end{Prop} 

 \begin{proof} 
 Consider the relation $R_a(\alpha _0, \alpha ,j)$, i.e. 
 it holds 
 $a/v_0\geqslant \alpha _0/p^{j}+\alpha _1/p^{j+1}$ with 
 integers $j\geqslant 0$, 
 $\alpha _0, \alpha _1\geqslant 0$. 
 Note that $\alpha _0$ and $\alpha _1$ 
 are not necessarily $p$-digits.
 Prove that the corresponding condition 
 $p^{m}D_{an}\in\c L(1+s)_{W(k)}$, where 
 $m=\alpha _0j+\alpha _1(j+1)$ 
 and $s=\min\{\alpha _0+\alpha _1,p-1\}$, 
 is implied by one of the relations $R_a(\beta _0, \beta _1,u)$ 
 from the sets in above 1)-4).  

 Otherwise, there is at least one relation 
 $R_a(\beta _0,\beta _1,u)$ listed in 1)-4),  
 such that the 
 following two conditions hold 
 \begin{equation} \label{E1.5} 
\beta _0+\beta _1\leqslant \min\{\alpha _0+\alpha _1, p-1\}
\end{equation}  
\begin{equation} \label{E1.6} 
\beta _0u+\beta _1(u+1)> \alpha _0j+\alpha _1(j+1)\,.
\end{equation}

Then 
$\alpha_0\geqslant \alpha _0-\beta _0\geqslant 
(\beta _0+\beta _1-\alpha_0-\alpha _1)(u+1)+
(\alpha _0-\beta _0)=
\beta _0u+\beta _1(u+1)-(\alpha _0u+\alpha _1(u+1))
>(\alpha _0+\alpha _1)(j-u)$
implies that $j\leqslant u$. 

$\bullet $\ {\it Suppose $\alpha _0\ne 0$}. 

If 
$j<u$ then $p/p^{u}>a/v_0\geqslant 
\alpha _0/p^{j}$ implies $p>\alpha _0p^{u-j}\geqslant p$. 
The contradiction. 
Therefore, $j=u$. Then \eqref{E1.6} implies    
$\beta _1>\alpha _1$ because 
$$0\geqslant (\beta _0+\beta _1-\alpha _0-
\alpha _1)u>\alpha _1-\beta  _1\, .$$

In particular, $\beta _1\ne 0$ and 
$R_a(\beta _0,\beta _1,u)\notin \c R_a^{I}$.  
Now condition \eqref{E1.5} implies 
that $\beta _0<\alpha _0$.
This excludes the set $\c R_a^{II}$ 
because $\alpha _0\leqslant\kappa _0$ (use that $j=u$). 

It remains to consider the sets $\c R_a^{III}$ 
and $\c R_a^{IV}$,
where $\beta _0=\kappa _0-1$ and 
$\alpha _0=\kappa _0$. 
Here $\beta _1\geqslant\kappa _1+2$ and 
$\alpha _1\leqslant\kappa _1$.
This gives the contradiction 
$\beta _0+\beta _1\geqslant\kappa _0+
\kappa _1+1>\alpha _0+\alpha _1$. 
\medskip 

$\bullet $\ {\it Suppose $\alpha _0=0$ and $\alpha _1\ne 0$.} 

Proceed similarly to the above case where $\alpha _0\ne 0$. 
Here $0>\alpha _1(j-u)$ implies $j+1\leqslant u$. 
If $j+1<u$ we obtain the following contradiction 
$p/p^u>a/v_0\geqslant \alpha _1/p^{j+1}$, i.e. 
$p>\alpha _1p^{u-j-1}\geqslant p$. So, $u=j+1$.

Now $a/v_0\geqslant \alpha _1/p^{u}$ implies that 
$\beta _0\geqslant \alpha _1$ and condition 
\ref{E1.5} implies $\beta _1=0$. 
Therefore, $R_a(\alpha _0,\alpha _1,j)=
R_a(\alpha _1, 0, u)\in\c R_a^{I}$.
\end{proof} 
\begin{remark} 
 Given $a\in\Z ^+(p)$ the relations from 1)-4)   
 are independent: 
 
 a) if $u(a)=0$ then 
 relations $R_a(\beta _0, 0,0)\in\c R_a^{I}$ 
 could be replaced by 
 the relation $R_a(\kappa _0, 0,0)$. 
 This relation together with 
 remaining ones $R_a(\beta _0, \beta _1, 0)\in\c R_a^{I}$ are independent 
 because the appropriate values $M(a,s)$ 
 strictly increase in $s$.
 
 b) if $u(a)\ne 0$ then (for a fixed $a$) all $M(a,s)$ strictly increase 
 together with $s$ and, therefore, are independent. 
\end{remark}

\subsection{Explicit description of ideals 
$\c L(w)_{W(k)}$} \label{S1.4}

Recall that the ideal $\c L(w)_{W(k)}$ is generated 
as $W(k)$-module  
by all commutators 
$$p^m[\dots [D_{a_1n_1},D_{a_2n_2}],\dots ,
D_{a_rn_r}]\, :=p^{m}[D_{\bar a\bar n}]$$   
where $\bar a=(a_1,\ldots ,a_r)$, 
for $1\leqslant i\leqslant r:=r(\bar a)$,  
there are $m_i, s_i\geqslant 0$   
such that $m=\sum _im_i$, \ 
$r+\sum _is_i \geqslant w$ and 
all $a_i\geqslant U(s_i,m_i)v_0$. 
\medskip

\begin{definition}
 Set 
 $M(\bar a, s)=
 \min\{m\in\Z _{\geqslant 0}\ |\ p^m[D_{\bar a\bar n}]
 \in\c L(r(\bar a)+s)_{W(k)}\}$.
\end{definition}

We always assume that $r=r(\bar a)<p$ because 
$D_{\bar a\bar n}\in\c L(r)_{W(k)}$.

If $\bar a=\bar 0$ then, clearly, $M(\bar a,0)=0$ but $M(\bar 0, s)$ 
are not defined for $s\geqslant 1$. 

Assume that $\bar a\ne\bar 0$. 

Let $\bar u=\bar u(\bar a)$ be the minimal integer such that 
for all $1\leqslant i\leqslant r$, there are $p$-digits 
$\theta _i, \theta _i'\in [0,p-1]$ such that 
$$\theta _i/p^{\bar u}+\theta _i'/p^{\bar u+1}\leqslant a_i/v_0
<\theta _i/p^{\bar u}+(\theta _i'+1)/p^{\bar u+1}\,,$$ 
Since $\bar u$ is minimal, at least one $\theta _i\ne 0$. 
\medskip  

Set $\bar{\kappa }_0=\kappa _0(\bar a)=
\sum _{1\leqslant i\leqslant r}\theta _i$ and 
$\bar{\kappa}_1=\kappa _1(\bar a)=
\sum _{1\leqslant i\leqslant r}\theta '_i$. 
\medskip 

\begin{Prop} \label{P1.4}

{\rm a)} $M(\bar a,j)=j\bar u$ for 
$0\leqslant j\leqslant \min \{p-r, \bar\kappa _0\}$;
\medskip 

{\rm b)}  
 $M(\bar a, \bar\kappa _0+j)=\bar u\bar\kappa _0+(\bar u+1)j$, 
 for $1\leqslant j\leqslant 
 \min\{p-(r+\bar\kappa _0), \bar\kappa _1\}$;
 \medskip 
 
{\rm c)} $M(\bar a, \bar\kappa _0+\bar\kappa _1+j)=
 \bar\kappa _0\bar u+\bar\kappa _1(\bar u+1)+1+j(\bar u+1)$, 
 where $j$ runs over 
 $1\leqslant j\leqslant p-(r+\bar\kappa _0+\bar\kappa _1)$.
 \end{Prop}
 
 \begin{proof}
  Note that the initial $[D_{\bar a\bar n}]\in \c L(r+s)_{W(k)}$, 
  where $s=0$. The most ``economical'' way to increase $s$ is 
  to multiply $D_{a_in_i}$ with $\theta _i\ne 0$ by $p^{\bar u}$. 
  We can do this $\bar\kappa _0$ times to use 
  the inequalities 
  $a_i/v_0\geqslant \theta _i/p^{\bar u}$. At 
  this stage we use the 
  basic relations from $\c R_{a_i}^{I}$. 
  If we have not yet reached $\c L(p)_{W(k)}$ 
  we can use $\bar\kappa _1$ 
  multiplications by $p^{\bar u+1}$ to increase 
  $s$ by $\bar\kappa _1$. 
  
  If we still are not in $\c L(p)_{W(k)}$ then 
  choose $a_{i_0}$ with 
  $\theta _{i_0}\ne 0$ and proceed with relations 
  $\c R_{a_{i_0}}
  (\theta _{i_0}-1,\theta '_{i_0}+2+t,\bar u)$. 
  This will require 
  the factor $p^{\bar u+2}$ at the first step and 
  the factors $p^{\bar u+1}$ at all next steps. 
  
  It is easy to verify that the results do 
  not depend on a chosen strategy. 
 \end{proof}

 \begin{remark}
If $\bar u=0$ then $[D_{\bar a\bar n}]
\in\c L(r+\bar\kappa _0)_{W(k)}$. 
 \end{remark}

\subsection{$\Z _p$-submodules  
$\c L_{\c N}(w)\subset \c L(w)$}\label{S1.5} 

\begin{definition}
 For $w\geqslant 1$, let $\c L_{\c N}(w)$ be the  
 closed submodule in $\c L$ such that 
 $\c L_{\c N}(w)_{W(k)}$ is generated by all 
 $p^m[D_{\bar a\bar n}]$ 
 which satisfy the inequality 
 $|\bar a|\geqslant v_0U(w-1,m)$. 
\end{definition}

\begin{Prop} \label{P1.5} We have the following 
decomposition of $\Z _p$-modules 
$$\c L_{\c N}(w)\cap C_2(\c L)\subset 
\sum _{w_1+w_2\geqslant w}
[\c L_{\c N}(w_1), \c L_{\c N}(w_2)]$$ 
\end{Prop}

It will be sufficient to prove the following lemma. 

\begin{Lem} \label{L1.6}
 If $p^m[[D_{\bar a_1\bar n_1}],[D_{\bar a_2\bar n_2}]]
 \in \c L_{\c N}(w)_{W(k)}$ 
 then for 
 $i=1,2$, there are 
 $m_i, w_i$ such that:
 \medskip 
 
 {\rm 1)}\  $m_1+m_2=m$, $w_1+w_2=w$;
 \medskip 
 
 {\rm 2)}\  $p^{m_i}[D_{\bar a_i\bar n_i}]
 \in\c L_{\c N}(w_i)_{W(k)}$. 
\end{Lem}

\begin{proof}[Proof of Lemma]\ \ 
\medskip 

$\bullet $\ {\it Choose $w_1,w_2\geqslant 1$ 
with maximal $w_1+w_2$ s.t. 
for some $m_1,m_2$, } 
$m=m_1+m_2$ {\it and for }$i=1,2$,   
$|\bar a_i|\geqslant v_0U(w_i-1,m_i)\,.$ 
\medskip 

We must prove that $w_1+w_2\geqslant w$.
\medskip 

Suppose $w_1+w_2<w$. 
\medskip 

We can assume that (otherwise, $w_1+w_2$ is not maximal), 
$$v_0U(w_i,m_i)>|\bar a_i|\geqslant v_0U(w_i-1,m_i)\,, i=1,2\, .$$ 

Let $v_0U(w_1+w_2,m)>|\bar a_1|+|\bar a_2|$. 
Then $U(w_1+w_2,m)>U(w-1,m)$ (use that $|\bar a|\geqslant v_0U(w-1,m)$) 
implies 
$w_1+w_2\geqslant w$, q.e.d.
\medskip 

Otherwise, $|\bar a_1|+|\bar a_2|\geqslant v_0U(w_1+w_2,m)$. 
By Prop.\,\ref{P1.1}c),   
 $\exists m_1', m_2'$ such that   \
$m_1'+m_2'=m$ $\&$  
$U(w_1,m_1')+U(w_2,m_2')=U(w_1+w_2,m)$.
\medskip

Let $m_1'\leqslant m_1$ and $m_2'\geqslant m_2$. 
Then $v_0U(w_1,m_1')
\geqslant v_0U(w_1,m_1)>|\bar a_1|$  
and  $v_0U(w_2,m_2')\leqslant v_0U(w_2,m_2)$. 
\medskip 

If $|\bar a_2|<v_0U(w_2,m'_2)$ then 
$v_0U(w_1+w_2,m)>|\bar a_1|+|\bar a_2|$. The contradiction.

If $|\bar a_2|\geqslant v_0U(w_2,m_2')$ then $w_2$ 
can be replaced with a bigger one and $w_1+w_2$ 
is not maximal. The lemma is proved.
\end{proof}

\begin{Cor} \label{C1.7}
 $\c L_{\c N}(w)\subset \c L(w)$. 
\end{Cor}

\begin{proof}
 By induction on $w$ it will be enough to 
 verify the statement for 
 $p^m[D_{\bar a\bar n}]$ with $r(\bar a)=1$. 
 But this follows from 
 the definition of $\c L(w)$. 
\end{proof}

\begin{definition}
 Let $\bar{\c L}=\c L/\c L(p)$ with the induced filtration by the ideals 
 $\bar{\c L}(w)=\c L(w)/\c L(p)$, $w\geqslant 1$. 
\end{definition}

Clearly, $\bar{\c L}(p)=0$ and  
$D_0$ with 
$D_{an}$,  
$a<(p-1)v_0$, generate $\bar{\c L}_{W(k)}$.


\section{Lie algebra $\bar{\c L}^{\,\dag }$ and ideal   
$\bar{\c L}^{[v_0]}\subset\bar{\c L}$} \label{S2}

 \subsection{Parameters $r^*$ and $N^*$} \label{S2.1} \ 
Let $u^*=(p-1)(p-2)+1$ 
and $w^*=(p-1)v_0$.  
\medskip

Introduce 
the following subset in $\Q $\,:
\medskip   

--- $A[u^*]$ is the set of all 
$$a_1p^{n_1}+a_2p^{n_2}+\dots +a_{u^*}p^{n_{u^*}}\, ,$$ 
where 
$n_1,\dots ,n_{u^*}\leqslant 0$, $n_1=0$ and 
 all   
$a_i\in [0,w^*)\cap\Z $.  

The set $A[u^*]$ is ``left-discrete'', i.e. for any 
$\gamma >0$, there is $\delta (\gamma )>0$ such that 
if $\alpha\in A[u^*]$ and $\alpha <\gamma $ then 
$\alpha\leqslant \gamma -\delta (\gamma )$. This property 
was proved in \cite{Ab12}, Sect.\,2.1. Therefore, 
for $1\leqslant s<p$, $0\leqslant m<M$, there is 
a positive infinum $\delta _0(s,m)$ of positive values 
$v_0-\alpha /U(s,m)$, where $\alpha $ runs over 
the set $A[u^*]$. 
\medskip 

{\bf Definition.} $\delta _0:=\min\{\delta _0(s,m)\ |
\ 1\leqslant s<p,\ 0\leqslant m<M\}$. 
\medskip 

For $m\in\Z _{\geqslant 0}$,  
denote by $A[u^*,m]$ the subset 
of $A[u^*]$ consisted of the elements such that all  
$n_u\geqslant -m$. 
\medskip

Let $r^*\in\Q $ be such that 
$r^*=b_0^*/(q^*_0-1)$, where $q^*_0=p^{N^*_0}$ with 
$N^*_0\geqslant 2$, $b^*_0\in\N $  
and $\op{gcd}(b_0^*,\, p(q^*_0-1))=1$. The set of 
such $r^*$ is dense 
in $\mathbb R _{>0}$ 
and we can assume that $r^*\in (v_0-\delta _0, v_0)$. 
\medskip 

Introduce 
the following subset in $\Q $\,:
\medskip   

--- $B[u^*]$ is the set of all numbers 
$$r^*(b_1p^{m_1}+b_2p^{m_2}+\dots +b_{u^*}p^{m_{u^*}})\, ,$$ 
where all $m_i\leqslant M-1$, $m_1\geqslant 0$, 
$b_i\geqslant 0$, $b_1\ne 0$, $b_1+\dots +b_{u^*}<p$. 
(In particular, $0\notin B[u^*]$.) 
For $m\in\Z _{\geqslant 0}$, $B[u^*,m]$ is the subset of $B[u^*]$ 
consisted of elements such that all 
$m_i\geqslant -m$. 
\medskip 

The above sets $A[u^*]$ and an analog of the set 
$B_1[u^*]:=p^{-(M-1)}B[u^*]$ (where we required $m_1=0$ 
instead of $0\leqslant m_1\leqslant M-1$) were introduced in 
\cite{Ab15}. Proceeding similarly 
to \cite{Ab15} 
we obtain that 
$A[u^*]\cap B_1[u^*]=\emptyset $ 
and there is $\rho (u^*)>0$ such that  
$$\rho (A[u^*], B_1[u^*]):=\inf\{|\alpha -\beta |\ |\ \alpha\in A[u^*], 
\beta\in B_1[u^*]\}\geqslant \rho (u^*)\, .$$ 

Choose $N^*\in\N $ satisfying the 
following conditions:
\medskip 

{\bf C1)}\  $N^*\equiv 0\,\op{mod}\,N^*_0$;
\medskip

{\bf C2)}\  $p^{N^*}\rho (A[u^*], B_1[u^*])\geqslant 2r^*(p-1)$;  
\medskip 

{\bf C3)}\  $r^*(1-p^{-N^*})\in (v_0-\delta _0, v_0)$. 
\medskip

Introduce $q=p^{N^*}$ and $b^*=b^*_0(q-1)/(q^*_0-1)\in\N $.
\medskip  

Note that $r^*=b^*/(q-1)$ and $b^*\in\Z ^+(p)$. 
\medskip

\begin{Prop} \label{P2.1} 
 a) If $\alpha\in A[u^*]$ and $\beta \in B[u^*]$ 
then  
$$q\,|qp^{M-1}\alpha -(q-1)\beta |>p^{M-1}b^*(p-1)\, ;$$

b) If $\alpha\in A[u^*]$ and $\alpha /U(s,m)< v_0$ then 
$q\alpha -b^*U(s,m)<0$.
 \end{Prop}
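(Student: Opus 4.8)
\textbf{Proof proposal for Proposition \ref{P2.1}.}

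The plan is to reduce both parts to the separation estimate $\rho(A[u^*],B_1[u^*])\geqslant\rho(u^*)$ together with the three conditions \textbf{C1)}--\textbf{C3)} defining $N^*$, and to the choice $r^*\in(v_0-\delta_0,v_0)$. For part a), I would first rewrite the quantity in a form that exposes the set $B_1[u^*]=p^{-(M-1)}B[u^*]$. Given $\alpha\in A[u^*]$ and $\beta\in B[u^*]$, write $\beta=p^{M-1}\beta_1$ with $\beta_1\in B_1[u^*]$; then
\[
q\,|qp^{M-1}\alpha-(q-1)\beta|
=qp^{M-1}\,|q\alpha-(q-1)\beta_1|
=qp^{M-1}(q-1)\,\bigl|\tfrac{q}{q-1}\alpha-\beta_1\bigr|.
\]
Now $\tfrac{q}{q-1}\alpha=\alpha+\tfrac{1}{q-1}\alpha$, and since $\alpha\in A[u^*]$ is bounded (each $a_i<w^*=(p-1)v_0$ and there are at most $u^*$ terms, all with $n_i\leqslant 0$, $n_1=0$, so $|\alpha|\leqslant u^* w^*$), the perturbation $\tfrac{1}{q-1}\alpha$ is small once $q=p^{N^*}$ is large. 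The hard part of a) is therefore to control the interplay between the genuine gap $|\alpha-\beta_1|\geqslant\rho(u^*)$ and this perturbation: I would split into the case $\alpha=\beta_1$ — impossible since $A[u^*]\cap B_1[u^*]=\emptyset$, but one must still bound $\tfrac{1}{q-1}\alpha$ from below, which fails unless $\alpha\neq 0$; here one uses that $\beta\in B[u^*]$ forces $\beta\neq 0$ (as $0\notin B[u]$), so if $\alpha=0$ the left side is $q(q-1)p^{M-1}|\beta_1|$ which is visibly large — and the case $\alpha\neq\beta_1$, where $|\tfrac{q}{q-1}\alpha-\beta_1|\geqslant\rho(u^*)-\tfrac{1}{q-1}|\alpha|\geqslant\rho(u^*)/2$ by \textbf{C2)}. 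Combining, $q\,|qp^{M-1}\alpha-(q-1)\beta|\geqslant qp^{M-1}(q-1)\cdot\rho(u^*)/2$, and condition \textbf{C2)} ($p^{N^*}\rho(A[u^*],B_1[u^*])\geqslant 2r^*(p-1)$ together with $r^*=b^*/(q-1)$) is precisely calibrated so that this exceeds $p^{M-1}b^*(p-1)=p^{M-1}r^*(q-1)(p-1)$.

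For part b), the point is purely about $A[u^*]$ and does not involve $q$ in an essential separation; rather it uses the definition of $\delta_0$ and the choice of $r^*$. Suppose $\alpha\in A[u^*]$ with $\alpha/U(s,m)<v_0$ for some $1\leqslant s<p$, $0\leqslant m<M$. I would observe that $\alpha$, having all $n_i\leqslant 0$ with the smallest $\geqslant -m'$ for some $m'$, in fact lies in $A[u^*,m]$ after possibly enlarging $m$ — here one must check that the relevant exponents are bounded below by $-m$; if not, $U(s,m)$ versus $U(s,m')$ and monotonicity of $U$ in its second argument handle the discrepancy. Once $\alpha\in A[u,m]$ with $u\leqslant u^*$, the definition of $\delta_0(s,m)$ gives that either $v_0-\alpha/U(s,m)\leqslant 0$ (excluded) or $v_0-\alpha/U(s,m)\geqslant\delta_0(s,m)\geqslant\delta_0$. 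Thus $\alpha/U(s,m)\leqslant v_0-\delta_0<r^*$ by the choice $r^*\in(v_0-\delta_0,v_0)$, whence $\alpha<r^*U(s,m)$. Multiplying by $q-1$: $(q-1)\alpha<(q-1)r^*U(s,m)=b^*U(s,m)$, and since $q\alpha=(q-1)\alpha+\alpha$ with $\alpha\geqslant 0$... this is not immediately $q\alpha<b^*U(s,m)$. The fix is to use the sharper condition \textbf{C3)}: $r^*(1-p^{-N^*})=r^*(q-1)/q\in(v_0-\delta_0,v_0)$, so in fact $\alpha/U(s,m)\leqslant v_0-\delta_0<r^*(q-1)/q=b^*/q$, giving $q\alpha<b^*U(s,m)$ directly, i.e. $q\alpha-b^*U(s,m)<0$.

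The main obstacle I anticipate is the bookkeeping in part a) around the degenerate case $\alpha=0$ and, more subtly, ensuring that the bound $|\alpha|\leqslant u^*w^*$ used to make $\tfrac{1}{q-1}|\alpha|\leqslant\rho(u^*)/2$ is consistent with how $N^*$ was chosen — one wants \textbf{C2)} alone to suffice, which it does since $\rho(u^*)\leqslant\rho(A[u],B_1[u])$ is an infimum over a set that (after translating $A[u^*]$ by bounded amounts) still realizes the gap; here a uniform bound on elements of $A[u^*]\cup B_1[u^*]$ entering the estimate must be folded into the constant, but this is exactly the role the paper assigns to taking $N^*$ ``sufficiently large'' beyond \textbf{C2)}. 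Part b) is then essentially a one-line consequence of \textbf{C3)} and the definition of $\delta_0$, modulo the routine check that $\alpha$ can be placed in $A[u^*,m]$ for the relevant $m$.
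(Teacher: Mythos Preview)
Your treatment of part b) matches the paper's: use the definition of $\delta_0$ to upgrade $\alpha/U(s,m)<v_0$ to $\alpha/U(s,m)\leqslant v_0-\delta_0$, then invoke \textbf{C3)} in the form $v_0-\delta_0<r^*(1-q^{-1})=b^*/q$ to conclude $q\alpha<b^*U(s,m)$. Your self-correction from $r^*$ to $r^*(1-q^{-1})$ is exactly the point.

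Part a), however, has a real gap. You factor $q\alpha-(q-1)\beta_1=(q-1)\bigl(\tfrac{q}{q-1}\alpha-\beta_1\bigr)$ and treat $\tfrac{1}{q-1}\alpha$ as the perturbation, so you need $\tfrac{|\alpha|}{q-1}\leqslant\rho(u^*)/2$. Since $|\alpha|$ can be as large as $u^*w^*=((p-1)(p-2)+1)(p-1)v_0$, this requires roughly $q-1\geqslant 2u^*w^*/\rho(u^*)$, which is a \emph{stronger} lower bound on $q$ than \textbf{C2)} provides: \textbf{C2)} only gives $q\geqslant 2r^*(p-1)/\rho(u^*)$, and $r^*(p-1)<v_0(p-1)\ll u^*w^*$. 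So \textbf{C2)} alone does not suffice for your decomposition, and the paper imposes no further largeness condition on $N^*$.

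The fix is to perturb the other factor. Write instead
\[
q\alpha-(q-1)\beta_1=q(\alpha-\beta_1)+\beta_1,
\]
so that
\[
q\,|q\alpha-(q-1)\beta_1|\geqslant q^2|\alpha-\beta_1|-q\beta_1\geqslant q^2\rho(u^*)-q\,r^*(p-1),
\]
using $\beta_1\leqslant r^*(p-1)$ (since $\beta_1\in B_1[u^*]$ has all exponents $\leqslant 0$ and coefficient sum $<p$). Now \textbf{C2)} reads $q\rho(u^*)\geqslant 2r^*(p-1)$, so the right side is $\geqslant 2qr^*(p-1)-qr^*(p-1)=qr^*(p-1)>b^*(p-1)$; multiplying by $p^{M-1}$ finishes. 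The point is that \textbf{C2)} is calibrated precisely to the bound $\beta_1\leqslant r^*(p-1)$, not to the (much larger) bound on $|\alpha|$. With this change your argument goes through and the separate treatment of $\alpha=0$ becomes unnecessary, since $0\in A[u^*]$ and $|\,0-\beta_1|\geqslant\rho(u^*)$ already.
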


\begin{proof} 
a) Use condition {\bf C2)}. 
Let $\beta _1=\beta /p^{M-1}\in B_1[u^*]$. 
Then \linebreak 
$q\,|q\alpha -(q-1)\beta _1|=q^2
|\alpha -\beta _1+\beta _1/q|
\geqslant q^2|\alpha -\beta _1|-
\beta _1q
\geqslant q^2\rho (A[u^*],\, B_1[u^*])$
\linebreak 
$-r^*(p-1)q
\geqslant 2r^*(p-1)q-r^*(p-1)q=r^*(p-1)q>b^*(p-1)$. 

It remains to multiply this inequality by $p^{M-1}$. 
\medskip 

b) Use condition {\bf C3)}:  
$\alpha /U(s,m)<v_0$ implies that 
$\alpha /U(s,m)\leqslant v_0-\delta _0$ and 
$q\alpha /U(s,m)\leqslant q(v_0-
\delta _0)<qr^*(1-q^{-1})=b^*$.
\end{proof}

 \subsection{The set $\mathfrak{A}^0$} \label{S2.2} 
 Use the above parameters $r^*$, $N^*$, $q=p^{N^*}$.   

\begin{definition}  ${\mathfrak{A}}^0$ is the set of all  
$\iota =p^m(qp^{M-1}\alpha -(q-1)\beta )$,  where   $m\geqslant 0$,   
$\alpha \in A[u^*,m]$, $\beta \in B[u^*,m]\cup\{0\}$ 
and $|\iota |\leqslant  p^{M-1}b^*(p-1)$. 
(Note that $p^m\alpha \in\Z _{\geqslant 0}$ and $p^m\beta /r^*\in\N $.)
\end{definition} 

\begin{remark} 
a) Prop.\ref{P2.1} implies that for all 
$\iota\in\mathfrak{A}^0$, we have $m<N^*$. 

b) 
Any $p^m\alpha $ appears in the form 
$a_1p^{n_1}+\ldots +a_{u^*}p^{n_{u^*}}$, where 
all $a_i\in [0,(p-1)v_0)$ and $n_i\leqslant m$. 

c) 
All $p^m\beta \ne 0$ appear in the form 
$r^*(b_1p^{m_1}+\ldots +b_{u^*}p^{m_{u^*}})$, where 
all $m_i\geqslant 0$ and $1\leqslant b_1+\ldots +b_{u^*}\leqslant p-1$.  
\end{remark}

 Let $\mathfrak{A}^0_0:=\{\iota\in\mathfrak {A}^0\ |\ \beta =0\}$.

\begin{Lem} \label{L2.2} 
Suppose $\iota =
p^m(qp^{M-1}\alpha -(q-1)\beta )\in\mathfrak{A}^0$. Then:
\medskip 

{\rm a)} $\mathfrak{A}^0_0=
\{qp^{M-1}a\ |\ a\in [0,(p-1)v_0)\cap\Z \,\}$; 
\medskip 

{\rm b)}\ if $\beta\ne 0$ then $m<N^*$ 
(in particular, $\mathfrak{A}^0$ is finite);
\medskip 

{\rm c)} the integers $p^m\alpha $ and $p^m\beta /r^*$ 
do not depend on the presentation of 
$\iota $ in the form $p^m(qp^{M-1}\alpha -(q-1)\beta )$ 
from the above definition of $\mathfrak{A}^0$. 
\end{Lem}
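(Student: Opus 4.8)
The plan is to treat the three parts in the order they are stated, since (b) and (c) both reduce, after the computation done for (a), to the separation estimate of Proposition \ref{P2.1}a). For part (a): if $\beta=0$ then $\iota=p^m(qp^{M-1}\alpha)=qp^{M-1}(p^m\alpha)$ with $\alpha\in A[u^*,m]$, so $p^m\alpha=a_1p^{n_1}+\dots+a_{u^*}p^{n_u^*}$ is a non-negative integer. The constraint $|\iota|\leqslant p^{M-1}b^*(p-1)$ becomes $qp^m\alpha\leqslant b^*(p-1)=r^*(q-1)(p-1)<q\cdot(p-1)v_0$ using $r^*<v_0$, hence $p^m\alpha<(p-1)v_0$. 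Conversely every integer $a\in[0,(p-1)v_0)$ lies in $A[u^*,0]$ (take $u=1$, $a_1=a$, $n_1=0$; note $a<(p-1)v_0=w^*$), so $qp^{M-1}a\in\mathfrak A^0_0$. This gives the claimed equality. First I would record that in this case the ``presentation'' is essentially unique as well, which already handles (c) on $\mathfrak A^0_0$.

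For part (b), suppose $\beta\neq 0$ and, for contradiction, $m\geqslant N^*$. Write $m=N^*+m'$ with $m'\geqslant 0$; since $q=p^{N^*}$ we have $p^m=qp^{m'}$, so $\iota=qp^{m'}\bigl(qp^{M-1}\alpha-(q-1)\beta\bigr)$, and therefore $|\iota|\geqslant q\,|qp^{M-1}\alpha-(q-1)\beta|$. By Proposition \ref{P2.1}a) (applicable because $\alpha\in A[u^*]$ and $\beta\in B[u^*]$ as $\beta\neq 0$) the right side exceeds $p^{M-1}b^*(p-1)$, contradicting the defining bound $|\iota|\leqslant p^{M-1}b^*(p-1)$. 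Hence $m<N^*$, and finiteness of $\mathfrak A^0$ follows since then $m$, $\alpha$, $\beta$ each range over finite sets (bounded $m$, bounded digits and exponents for $\alpha$ and $\beta$, with the explicit description in the Remark after the definition).

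For part (c), suppose $\iota=p^m(qp^{M-1}\alpha-(q-1)\beta)=p^{\tilde m}(qp^{M-1}\tilde\alpha-(q-1)\tilde\beta)$ are two admissible presentations; we must show $p^m\alpha=p^{\tilde m}\tilde\alpha$ and $p^m\beta/r^*=p^{\tilde m}\tilde\beta/r^*$ (equivalently $p^m\beta=p^{\tilde m}\tilde\beta$). Set $X=p^m\alpha$, $\tilde X=p^{\tilde m}\tilde\alpha\in\Z_{\geqslant 0}$ and $Y=p^m\beta/r^*$, $\tilde Y=p^{\tilde m}\tilde\beta/r^*\in\Z_{\geqslant 0}$ (with $Y=0$ iff $\beta=0$, and similarly for $\tilde Y$). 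The two presentations give $qp^{M-1}X-(q-1)b^*Y/(q-1)\cdot(q-1)$ — more cleanly, $\iota=qp^{M-1}X-b^*Y=qp^{M-1}\tilde X-b^*\tilde Y$, so $qp^{M-1}(X-\tilde X)=b^*(Y-\tilde Y)$. Since $b^*\in\Z^+(p)$ and $\gcd(b^*,p)=1$, while $q=p^{N^*}$ is a $p$-power, reduction modulo $q$ shows $q\mid(Y-\tilde Y)$; but by Remark b)–c) after the definition of $\mathfrak A^0$ one has $0\leqslant Y,\tilde Y$ bounded in a way that, combined with the size bound $|\iota|\leqslant p^{M-1}b^*(p-1)$ and $r^*(q-1)=b^*$, forces $|Y-\tilde Y|<q$; hence $Y=\tilde Y$ and then $X=\tilde X$. \emph{The main obstacle} is precisely this last size/arithmetic bookkeeping in (c): one needs the right a priori bound on $Y$ (equivalently on $\beta/r^*=\sum b_jp^{m_j}$ with the exponents $m_j$ controlled once $m<N^*$ is known from part (b)) to convert the divisibility $q\mid(Y-\tilde Y)$ into an equality, so (c) genuinely uses (b) and the explicit shape of elements of $B[u^*,m]$ rather than just Proposition \ref{P2.1}.
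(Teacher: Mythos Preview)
Your argument for part (b) is correct and essentially identical to the paper's.

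There are, however, two genuine gaps.

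\textbf{Part (a), the converse inclusion.} You check that $a\in[0,(p-1)v_0)\cap\Z$ lies in $A[u^*,0]$, but you never verify the size constraint $|\iota|\leqslant p^{M-1}b^*(p-1)$ required for membership in $\mathfrak{A}^0$. Concretely, you need $qa\leqslant b^*(p-1)$, i.e.\ $a/(p-1)\leqslant b^*/q=r^*(1-q^{-1})$. Since $r^*(1-q^{-1})<v_0$, the hypothesis $a<(p-1)v_0$ alone does not give this. The paper closes this gap by invoking $\delta_0$: because $a\in A[1,0]$ and $a/(p-1)<v_0$, the definition of $\delta_0(p-1,0)\geqslant\delta_0$ forces $a/(p-1)\leqslant v_0-\delta_0$, and then condition~\textbf{C3} gives $v_0-\delta_0<r^*(1-q^{-1})$. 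The paper establishes this equivalence explicitly before declaring the reverse inclusion ``obvious''.

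\textbf{Part (c), the divisibility and the bound.} From $qp^{M-1}(X-\tilde X)=b^*(Y-\tilde Y)$ with $\gcd(b^*,p)=1$ you in fact get $qp^{M-1}\mid(Y-\tilde Y)$, not merely $q\mid(Y-\tilde Y)$. Your weaker divisibility would require $|Y-\tilde Y|<q$, and that bound is \emph{false} for $M\geqslant 2$: with $m<N^*$ and $\beta/r^*=\sum b_jp^{m_j}$, $m_j\leqslant M-1$, $\sum b_j\leqslant p-1$, one only gets
\[
Y=p^m\beta/r^*\ \leqslant\ (p-1)\,p^{\,m+M-1}\ <\ p^{\,m+M}\ \leqslant\ qp^{M-1},
\]
so $Y,\tilde Y\in[0,qp^{M-1})$. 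The paper uses exactly this: both integers lie in $[0,qp^{M-1})$ and are congruent modulo $qp^{M-1}$, hence equal. Your reduction modulo $q$ throws away the extra factor $p^{M-1}$ and leaves an unbridgeable gap when $M\geqslant 2$. The fix is immediate once you keep the full $p$-power divisor.
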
 

\begin{proof} a) If $\iota\in\frak{A}^0_0$ then 
$\iota =qp^{M-1}(p^m\alpha )\in\frak{A}^0$ 
implies that 
$p^{m}\alpha /(p-1)\leqslant b^*/q=r^*(1-q^{-1})\in (v_0-\delta _0,v_0)\, $ 
and 
$\frak{A}^0_0\subset 
\{qp^{M-1}a\ |\ a\in [0,(p-1)v_0)\cap\Z\}\,.$  
The opposite embedding is obvious. 
\medskip

b) If $m\geqslant N^*$ then by Prop.\,\ref{P2.1}, 
 $|\iota |> p^{M-1}b^*(p-1)$ i.e. 
$\iota \notin \mathfrak{A}^0$. 
\medskip

c) 
If $\iota =p^{m'}(qp^{M-1}\alpha '-(q-1)\beta ')$ 
is another presentation 
of $\iota $ then 
$p^m\beta /r^*$ and $p^{m'}\beta '/r^*$ are 
non-negative congruent modulo $qp^{M-1}$ integers  
and the both are smaller than $qp^{M-1}$. Indeed, 
we have  
$\beta /r^*=b_1p^{m_1}+b_2p^{m_2}+\dots +b_{u^*}p^{m_{u^*}}\,,$  
where all $m_i\leqslant M-1$. As a result, 
$$p^m\beta /r^*\leqslant p^{m+M-1}(b_1+\dots +b_{u^*})
\leqslant p^{m+M-1}(p-1)<p^{m+M}\leqslant qp^{M-1}$$
because $m<N^*$. Similarly, $p^{m'}\beta '/r^*<qp^{M-1}$.  
Therefore, 
they coincide and this implies 
also that 
$p^m\alpha =p^{m'}\alpha '$. 
 \end{proof}
 
 \begin{Cor} \label{C2.3}
  Suppose that  $\iota =p^m(qp^{M-1}\alpha -(q-1)\beta )\in\mathfrak{A}^0$.  
  Then the sum of \lq\lq\,$p$-digits\rq\rq\  
  $b_1+\dots +b_{u^*}$ of the appropriate $\beta /r^*
  =b_1p^{m_1}+\dots +b_{u^*}p^{m_{u^*}}$ depends only on $\iota $. 
 \end{Cor}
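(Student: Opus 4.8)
The plan is to reduce the statement to Lemma~\ref{L2.2}(c) by identifying the sum $b_1+\dots +b_u$ with the sum of the base-$p$ digits of the non-negative integer $n:=p^m\beta /r^*$, the latter being an invariant of $\iota $ by that lemma. So first I would fix a presentation $\iota =p^m(qp^{M-1}\alpha -(q-1)\beta )$ as in the definition of $\mathfrak{A}^0$; then $\beta\in B[u^*,m]$, so $\beta /r^*=b_1p^{m_1}+\dots +b_up^{m_u}$ with $u\leqslant u^*$, with non-negative integers $b_1,\dots ,b_u$ satisfying $b_1\neq 0$ and $b_1+\dots +b_u<p$, and with exponents $m_1\geqslant 0$, all $m_i\leqslant M-1$ and all $m_i\geqslant -m$. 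Multiplying through by $p^m$ gives
$$n=p^m\beta /r^*=\sum _{i=1}^u b_ip^{m+m_i}\, ,$$
where every exponent $m+m_i$ is now a non-negative integer (this is where $m_i\geqslant -m$ enters).

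Next I would collect the terms with equal exponent: writing $E$ for the set of distinct values among $m+m_1,\dots ,m+m_u$ and $c_e:=\sum _{m+m_i=e}b_i$ for $e\in E$, one gets $n=\sum _{e\in E}c_ep^e$ with $0\leqslant c_e\leqslant b_1+\dots +b_u<p$. The key point --- and this is precisely where the hypothesis $b_1+\dots +b_u<p$ is used --- is that no carrying occurs, so each $c_e$ is already a legitimate base-$p$ digit of $n$; hence $b_1+\dots +b_u=\sum _{e\in E}c_e$ equals the base-$p$ digit sum $s_p(n)$ of $n$. In particular this already shows that $b_1+\dots +b_u$ is the same for every representation of $\beta /r^*$ of the above shape.

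It remains to invoke Lemma~\ref{L2.2}(c): the integer $n=p^m\beta /r^*$ does not depend on the presentation of $\iota $ in the form $p^m(qp^{M-1}\alpha -(q-1)\beta )$, hence neither does its digit sum $b_1+\dots +b_u=s_p(n)$, which is the assertion. If $\beta =0$, i.e.\ $\iota\in\mathfrak{A}^0_0$, the sum is the empty sum $0$ and there is nothing to prove.

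I do not anticipate a serious obstacle. The only points needing care are that the exponents $m+m_i$ are genuinely non-negative, so that ``base-$p$ digit sum'' is meaningful, and that the collection step introduces no carries; both are immediate from the explicit constraints built into the definitions of $B[u,m]$ and $\mathfrak{A}^0$. (The bound $b_1+\dots +b_u<p$ is genuinely needed: otherwise $p\cdot p^0=1\cdot p^1$ would yield digit sums $p$ and $1$.) The single nontrivial input, Lemma~\ref{L2.2}(c), is already proved above.
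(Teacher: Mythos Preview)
Your argument is correct and is exactly the intended one: the paper states Corollary~\ref{C2.3} as an immediate consequence of Lemma~\ref{L2.2}(c), and your proof spells out precisely why --- the constraint $b_1+\dots +b_u<p$ forces $\sum_i b_ip^{m+m_i}$ to be the honest base-$p$ expansion of the invariant integer $p^m\beta/r^*$, so the sum of the $b_i$ is its digit sum.
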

 
 \begin{definition} 
  $\op{ch}\,\iota :=b_1+\dots +b_{u^*}$\,. 
 \end{definition} 
 \begin{remark}
  If $\op{ch}\,\iota \geqslant 1$ then $\iota\ne 0$. 
  (Use Prop.\,\ref{P2.1}a).)
 \end{remark}

Let  
$\iota =p^m(qp^{M-1}\alpha -(q-1)\beta )\in{\mathfrak{A}}^0$.   
By Lemma\,\ref{L2.2}, $p^m\alpha $ depends only on $\iota $ and 
can be presented (non-uniquely) in the form 
$a_1p^{n_1}+\dots +a_up^{n_u}$ with non-zero 
coefficients 
$a_i\in [1,(p-1)v_0))\cap \Z $, 
$n_i\leqslant m$, $n_1=m$ and $u\leqslant u^*$. 
(We do not count the zero terms.)

\begin{definition} 
 $\kappa (\iota )$ is the maximal natural number 
such that for any above presentation  
 of $p^m\alpha $, $\kappa (\iota )\leqslant u$. 
\end{definition}

\begin{remark}  
a) If $\iota\in\mathfrak{A}^0$ then 
$\kappa (\iota )\leqslant u^*$ and $\op{ch}(\iota )\leqslant p-1$;

b) if $\iota\in\mathfrak{A}^0_0$ then $\op{ch}(\iota )=0$;

c) if $\iota\in\mathfrak{A}^0_0$ and $\iota\ne 0$ 
then $\kappa (\iota )=1$.
\end{remark}
\medskip 

\subsection{Lie algebras ${\c L}^{\,\dag }$ and   
$\bar{\c L}^{\,\dag }$} \label{S2.3}

Let 
$\iota =p^m(qp^{M-1}\alpha -(q-1)\beta )\in\mathfrak{A}^0$ 
be given in notation from Sect.\,\ref{S2.2}. 


\begin{definition} The subset 
$\mathfrak{A}^+(p)$ consists of  $\iota\in\mathfrak{A}^0$ such that 
 
--- $\iota >0$;

--- $\op{gcd}(p^m\alpha \,,p^m\beta /r^*,\,p)=1$;

--- $\kappa (\iota )\leqslant (p-2)\op{ch}(\iota )+1$. 
\end{definition} 

\begin{remark} 
For $\iota\in\mathfrak{A}^+(p)$, 
$(p-2)\op{ch}(\iota )+1\leqslant
(p-2)(p-1)+1=u^*\, .$
\end{remark}

The elements of $\{t^{-\iota }\ |\ \iota\in \mathfrak{A}^+(p)\}$ behave 
\lq\lq well\rq\rq\ modulo 
$(\sigma -\id )\c K$, i.e. the natural map 
$\sum\limits _{\iota\in\mathfrak{A}^+(p)}kt_0^{-\iota }\To \c K/(\sigma -\id )\c K$ 
is injective. This is implied by the following proposition. 

\begin{Prop} \label{P2.4} 
Let $v_p$ be the $p$-adic valuation such that $v_p(p)=1$. 
\medskip 

{\rm a)} 
Then all $\iota p^{-v_p(\iota )}$, where $\iota\in\mathfrak{A}^+(p)$, 
 are pairwise different. 
\medskip 

{\rm b)} If $\iota\in\mathfrak{A}^+(p)$ and $\op{ch}(\iota )=1$ then 
$\iota p^{-v_p(\iota )}\geqslant qv_0-b^*$.
\end{Prop}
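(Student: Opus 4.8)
\textbf{Proof proposal for Proposition \ref{P2.4}.}

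The plan is to reduce both statements to elementary estimates on the ``$p$-digit'' presentations of elements of $\mathfrak{A}^0$, using the separation property between $A[u^*]$ and $B_1[u^*]$ packaged in Proposition \ref{P2.1} together with the bookkeeping invariants $\op{ch}(\iota )$ and $\kappa (\iota )$ established in Section \ref{S2.2}. First I would write $\iota =p^m(qp^{M-1}\alpha -(q-1)\beta )\in\mathfrak{A}^+(p)$ in its standard form, so that $p^m\alpha $ and $p^m\beta /r^*$ are the well-defined integers from Lemma \ref{L2.2}(c), and observe that the condition $\op{gcd}(p^m\alpha ,\,p^m\beta /r^*,\,p)=1$ in the definition of $\mathfrak{A}^+(p)$ means $v_p(\iota )=\min\{v_p(qp^{M-1}\alpha ),\,v_p((q-1)\beta )\}$; since $\op{gcd}(b^*,p)=1$ and $q-1$ is prime to $p$, this is controlled purely by the $p$-adic valuations of $p^m\alpha $ and $p^m\beta /r^*$. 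Dividing through by $p^{v_p(\iota )}$ replaces $\iota $ by a normalized element $\iota '=\iota p^{-v_p(\iota )}$ which again has a presentation of the same shape with exponents shifted, and with at least one of the two ``digit sums'' prime to $p$.

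For part (a), suppose $\iota _1,\iota _2\in\mathfrak{A}^+(p)$ have $\iota _1p^{-v_p(\iota _1)}=\iota _2p^{-v_p(\iota _2)}=:\xi $. The key point is that $\op{ch}$ is an invariant of an element of $\mathfrak{A}^0$ (Corollary \ref{C2.3}), hence of $\xi $ via the normalized presentations; likewise $\kappa $ is an invariant. Then I would argue that from $\xi $ one can reconstruct the pair $(p^m\alpha ,\,p^m\beta /r^*)$ up to the common power of $p$ that was divided out: this uses Proposition \ref{P2.1}(a), which says $q|qp^{M-1}\alpha -(q-1)\beta |>p^{M-1}b^*(p-1)$ whenever the $A$-part and $B$-part are both nonzero, to pin down the ``$\alpha $'' and ``$\beta $'' contributions separately as the positive and negative parts of $\xi $ after clearing denominators — the separation constant forces no collapsing or cancellation between an $A[u^*]$-term and a $B[u^*]$-term. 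Combined with Lemma \ref{L2.2}(c), the presentation data $(p^m\alpha ,\,p^m\beta /r^*)$ is then determined by $\xi $, and since $\iota _i$ is recovered from this data together with the requirement $v_p(\iota _i)=0$-after-normalization (i.e. the $\op{gcd}$ condition), we get $\iota _1=\iota _2$.

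For part (b), assume $\op{ch}(\iota )=1$, so $\beta /r^*=b_1p^{m_1}$ with $b_1=1$, i.e. $\beta =r^*p^{m_1}$ and $(q-1)\beta =b^*p^{m_1}$; thus $\iota =p^m(qp^{M-1}\alpha -b^*p^{m_1})$ with $p^m\alpha \in A[u^*,m]$. Since $\op{gcd}(b^*,p)=1$, the $B$-contribution $b^*p^{m+m_1}$ has $p$-adic valuation exactly $m+m_1$; after dividing by $p^{v_p(\iota )}$ the $B$-term becomes a positive integer multiple of $b^*$ of the form $b^*p^{m+m_1-v_p(\iota )}$ and in particular is $\geqslant b^*$, while the $A$-term $qp^{M-1}(p^m\alpha )$ retains its factor $q$. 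Writing $\iota p^{-v_p(\iota )}=qp^{M-1}(p^{m}\alpha /p^{v_p(\iota )}) - b^*p^{m+m_1-v_p(\iota )}$ and using $\iota >0$ together with $p^m\alpha /p^{v_p(\iota )}\geqslant v_0$-type lower bounds coming from the admissibility $\kappa (\iota )\leqslant (p-2)+1$ and Proposition \ref{P2.1}(b) (which in the form $q\alpha - b^*U(s,m)<0 \iff \alpha/U(s,m)<v_0$ controls exactly when the $A$-term dominates), I would extract $\iota p^{-v_p(\iota )}\geqslant qp^{M-1}\cdot p^{M-1} v_0 \cdot(\text{something})$ — more precisely the cleanest route is: the $A$-term is a nonzero multiple of $q$, hence $\geqslant q\cdot(\text{least positive }p^m\alpha\text{-value})$, and the $B$-term is $\geqslant b^*$ but, being the only thing subtracted, the positivity of $\iota $ forces the $A$-term $\geqslant b^*$ as well, giving after the normalization the bound $\iota p^{-v_p(\iota)}\geqslant qv_0-b^*$ once one checks the least $A$-value after normalization is $\geqslant v_0$ (this is where the choice $r^*\in(v_0-\delta_0,v_0)$ and condition \textbf{C3)} enter, via Proposition \ref{P2.1}(b)).

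The main obstacle I anticipate is part (a): making rigorous the claim that the normalized value $\xi$ determines the full presentation data $(p^m\alpha,\,p^m\beta/r^*)$. The subtlety is that $p^m\alpha$ itself has a non-unique expansion as a sum $a_1p^{n_1}+\dots+a_{u^*}p^{n_{u^*}}$ with digits in $[0,(p-1)v_0)$, and one must be careful that two \emph{different} admissible $\iota$'s in $\mathfrak{A}^+(p)$ cannot, after dividing out their respective powers of $p$, land on the same rational — this is not purely formal because the digit ranges $[0,(p-1)v_0)$ can exceed $p$, so ``digits'' do not behave like base-$p$ digits. The resolution has to lean on Proposition \ref{P2.1}(a): the $q^2$-scale separation between $A[u^*]$ and $B_1[u^*]$ guarantees that in $\xi=(\text{scaled }\alpha\text{-part})-(\text{scaled }\beta\text{-part})$ the two parts live in disjoint ``archimedean ranges'' modulo the relevant power of $p$, so the decomposition of $\xi$ into its $A$-component and $B$-component is forced; then Lemma \ref{L2.2}(c) finishes the job. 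I would spend the bulk of the write-up on exactly this separation argument and treat parts (b) as a comparatively short corollary of the same estimates plus Proposition \ref{P2.1}(b).
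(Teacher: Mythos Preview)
Your ingredients are the right ones, but both parts have genuine gaps, and the paper's route is cleaner.

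\textbf{Part (a).} The step ``pin down the $\alpha$- and $\beta$-contributions as the positive and negative parts of $\xi$'' is not well-defined: $\xi=\iota p^{-v_p(\iota)}$ is a single positive integer, and Proposition~\ref{P2.1}(a) is a size bound on $|qp^{M-1}\alpha-(q-1)\beta|$, not a decomposition principle. You also claim $\op{ch}$ is an invariant of $\xi$, but $\xi$ need not lie in $\mathfrak{A}^0$. The paper avoids all of this by a threshold argument: for $\op{ch}(\iota)=0$ the map $\iota\mapsto\iota p^{-v_p(\iota)}$ is a bijection onto $\Z^+(p)\cap[0,(p-1)v_0)$ by Lemma~\ref{L2.2}(a), and every such value is $<(v_0-\delta_0)(p-1)$ by the choice of $\delta_0$; for $\op{ch}(\iota)\geqslant 1$ one first checks $v_p(\iota)\leqslant m+M-1$ (since $0<p^m\beta/r^*<p^{m+M}$ forces $\iota/p^m\notin p^M\Z$), and then Proposition~\ref{P2.1}(a) gives $\iota p^{-v_p(\iota)}\geqslant \iota p^{-(m+M-1)}>(b^*/q)(p-1)>(v_0-\delta_0)(p-1)$ via \textbf{C3)}. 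So the two cases land on opposite sides of $(v_0-\delta_0)(p-1)$. Within $\op{ch}\geqslant 1$, equal normalizations mean $\iota_1=p^k\iota_2$; the presentation $p^{m_2+k}(qp^{M-1}\alpha_2-(q-1)\beta_2)$ is then admissible for $\iota_1$, so by Lemma~\ref{L2.2}(c) its invariants are $p^k$ times those of $\iota_2$, and the $\gcd$ condition in $\mathfrak{A}^+(p)$ forces $k=0$. No reconstruction from $\xi$ is needed.

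\textbf{Part (b).} You should compute $v_p(\iota)$ exactly rather than bound it. With $\op{ch}(\iota)=1$ one has $\beta/r^*=p^{m_1}$ for a single $0\leqslant m_1\leqslant M-1$, so $\iota=p^{m+M-1-A}\bigl(q(p^A\alpha)-b^*\bigr)$ with $A:=M-1-m_1\in[0,M-1]$; since $\gcd(b^*,p)=1$ and $qp^A\alpha\in p\Z$ (using $m<N^*$), the bracket is prime to $p$ and hence $\iota p^{-v_p(\iota)}=q(p^A\alpha)-b^*$ exactly. Now Proposition~\ref{P2.1}(b) with $s=1$, $U(1,A)=p^{-A}$, reads: $p^A\alpha<v_0\Rightarrow q(p^A\alpha)-b^*<0$, contradicting $\iota>0$. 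So $p^A\alpha\geqslant v_0$ and the inequality is immediate. Your ``$B$-term $\geqslant b^*$'' is in fact an equality, and the $\kappa(\iota)$ bound plays no role here.
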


\begin{proof} a) Suppose 
$\iota =p^m(qp^{M-1}\alpha -(q-1)\beta )\in \mathfrak{A}^+(p)$. 
\medskip 

If $\op{ch}(\iota )=0$ then  
$\iota\mapsto \iota p^{-v_p(\iota )}$ 
identifies $\{\iota\in\mathfrak{A}^+(p)\ |\ 
\op{ch}(\iota )=0\}$ 
with  
$\Z ^+(p)\cap [0,(p-1)v_0)$, cf.\,Lemma\,\ref{L2.2}a).

If $\op{ch}(\iota )\geqslant 1$ then 
$\iota p^{-m}\notin p^M\N $, i.e. $m+M>v_p(\iota )$. 

Indeed, 
$\iota p^{-m}=qp^{M-1}\alpha -(q-1)\beta \in p^M\N $ implies (use that 
$qp^{M-1}\alpha\in p^M\N $ because $m<N^*$) that 
$\beta /r^*=p^{-m}(b_1p^{m_1}+b_2p^{m_2}+\dots +b_up^{m_u})\in p^M\N $ with 
all $m_i\leqslant m+M-1$. But this number is 
$\leqslant p^{M-1}\op{ch}\,\iota <p^M$. 
The contradiction. 
Then Prop.\,\ref{P2.1}a) implies that 
$$\iota p^{-v_p(\iota )}\geqslant \iota p^{-m-M+1}=
p^{-M+1}|qp^{M-1}\alpha -(q-1)\beta |>$$ 
$$(b^*/q)(p-1)=r^*(1-q^{-1})(p-1)> 
(v_0-\delta _0)(p-1)$$  
(use property {\bf C3} from Sect.\,\ref{S2.1}). 

Finally, 
if $\iota \in\mathfrak{A}^0_0$ then $\iota p^{-v_p(\iota )}=a<
(p-1)v_0$ implies the inequality  $a<(v_0-\delta _0)(p-1)$ 
by the choice of $\delta _0$, cf.\,Sect.\,\ref{S2.1}. 
On the other hand, 
for all $\iota\in \mathfrak{A}^+(p)$ with 
$\op{ch}(\iota )\geqslant 1$, the values  
$\iota p^{-v_p(\iota )}$ are different (use that 
$\op{gcd}(p^m\alpha ,p^m\beta /r^*)\not\equiv 0\,\op{mod}\,p$) 
and are bigger than $(v_0-\delta _0)(p-1)$. 
\medskip 

b) There is $0\leqslant A\leqslant M-1$ such that 
$\iota =p^m(qp^{M-1}\alpha -p^{M-1-A}b^*)$, i.e. 
$\iota p^{-v_p(\iota )}=q(p^A\alpha )-b^*$ and b) holds when 
$p^A\alpha\geqslant v_0$. But if $p^A\alpha =\alpha /U(1,A)<v_0$ then 
Prop.\ref{P2.1}b) implies that 
$q\alpha -b^*U(1,A)=p^{-A}(q(p^A\alpha )-b^*)<0$.  The contradiction. 

The proposition is completely proved.  
\end{proof}

\begin{definition} 
  $\mathfrak{A}^0(p)=\mathfrak{A}^+(p)\cup\{0\}$.
\end{definition} 

Let $\widehat{\c L}_{W(k)}^{\,\dag }$ be the 
Lie $W(k)$-algebra with the set of 
free generators 
$$\{\ D^{\,\dag }_{\iota n}\ |\ \iota\in\mathfrak{A}^+(p), 
n\in\Z /N_0\}\cup \{D^{\,\dag }_0\}\, .$$ 
Use the notation $\sigma $ for 
the $\sigma $-linear automorphism of 
$\widehat{\c L}^{\,\dag }_{W(k)}$ such that  
$\sigma :D^{\,\dag }_{\iota n}\mapsto D^{\,\dag }_{\iota ,n+1}$ and 
$\sigma :D^{\dag }_0\mapsto D^{\dag }_0$. Introduce 
the Lie $\Z _p$-algebras    
$\widehat{\c L}^{\,\dag }:=\widehat{\c L}^{\,\dag }_{W(k)}|_{\sigma =\id }$ and  
$\c L^{\,\dag }=\widehat{\c L}^{\,\dag }/
C_p(\widehat{\c L}^{\,\dag })\otimes \Z /p^M$.   
\medskip

\begin{definition}  $\{\c L^{\,\dag}(w)\}_{w\geqslant 1}$ is the minimal 
filtration of $\c L^{\,\dag }$ such that:

-- for all $w_1,w_2$, $[\c L^{\dag }(w_1),\c L^{\dag }(w_2)]\subset \c L^{\dag }(w_1+w_2)$;

-- $p^mD^{\,\dag }_{\iota n}\in \c L^{\,\dag}(1+s+\op{ch}(\iota ))_{W(k)}$ if 
$\iota \geqslant p^{M-1}b^*U(s,m)$. 
\end{definition} 

\begin{remark} The above condition 
$\iota \geqslant b^*p^{M-1}U(s,m)$ is  
analogous to the 
condition $a\geqslant v_0U(s,m)$ from Section 
\ref{S1.4}, where $v_0$ is replaced by 
$b^*/q$ (which is very close to $v_0$, 
cf.\,condition {\bf C3} from Section \ref{S2.1}. 
\end{remark}

\begin{definition}
 For $\bar\iota =(\iota _1, \dots ,\iota _r)$,  
 set 
 $r(\bar\iota )=r$, 
 $\op{ch}(\bar\iota )=\op{ch}(\iota _1)+\dots +\op{ch}(\iota _r)$ 
 and 
 $|\bar \iota |=\iota _1+\dots +\iota _r$. 
\end{definition}

Proceeding similarly to Sections 
\ref{S1.4}-\ref{S1.5} introduce the submodules 
$\c L^{\dag }(w)_{\c N}$ in $\c L^{\dag}$ generated by 
$p^m[D^{\dag }_{\bar{\iota }\bar n}]$ 
(this is an analogue of the notation $[D_{\bar a\bar n}]$) such that 
either $\op{ch}(\bar\iota )+1\geqslant w$ or 
$|\bar{\iota }|\geqslant p^{M-1}b^*U(w-\op{ch}(\bar\iota )-1,m)$, otherwise. 
Then we can prove an analogue of Corollary 
\ref{C1.7} that for any $1\leqslant w\leqslant p$, 
\begin{equation} \label{E2.1} 
\c L^{\dag }_{\c N}(w)\subset \c L^{\dag }(w)\,. 
\end{equation} 

There is also an analogue of Proposition \ref{P1.4}, 
where the quotients $a_i/v_0$ are replaced by 
$\iota _i/p^{M-1}b^*$ or, equivalently, by $\iota _i/qp^{M-1}v_0$. 
\medskip 

More precisely,
set 
$$M(\bar\iota , s)=
 \min\{m\in\Z _{\geqslant 0}\ |\ p^m[D_{\bar\iota \bar n}]
 \in\c L(r(\bar\iota )+\op{ch}(\bar\iota )+s)_{W(k)}\}\,.$$

We always assume that $r^*=r(\bar\iota )+\op{ch}(\bar\iota )<p$ because 
$D_{\bar \iota \bar n}\in\c L^{\dag }(r^*)_{W(k)}$.

If $\bar a=\bar 0$ then, clearly, $M(\bar a,0)=0$. 

Assume that $\bar a\ne\bar 0$ and $s\geqslant 1$.  

Let $\bar u=\bar u(\bar\iota )$ be the minimal integer such that 
for all $1\leqslant i\leqslant r$, there are $p$-digits 
$\theta _i, \theta _i'\in [0,p-1]$ such that 
$$\theta _i/p^{\bar u}+\theta _i'/p^{\bar u+1}\leqslant \iota _i/qb^*
<\theta _i/p^{\bar u}+(\theta _i'+1)/p^{\bar u+1}\,,$$ 
Since $\bar u$ is minimal, at least one $\theta _i\ne 0$. 
\medskip  

Set $\bar{\kappa }_0=\kappa _0(\bar\iota )=
\sum _{1\leqslant i\leqslant r}\theta _i$ and 
$\bar{\kappa}_1=\kappa _1(\bar\iota )=
\sum _{1\leqslant i\leqslant r}\theta '_i$. 
\medskip 

\begin{Prop} \label{P2.5}

{\rm a)} $M(\bar\iota ,j)=j\bar u$ for 
$0\leqslant j\leqslant \min \{p-r^*, \bar\kappa _0\}$;
\medskip 

{\rm b)}  
 $M(\bar\iota , \bar\kappa _0+j)=\bar u\bar\kappa _0+(\bar u+1)j$, 
 for $1\leqslant j\leqslant 
 \min\{p-(r^*+\bar\kappa _0), \bar\kappa _1\}$;
 \medskip 
 
{\rm c)} $M(\bar\iota , \bar\kappa _0+\bar\kappa _1+j)=
 \bar\kappa _0\bar u+\bar\kappa _1(\bar u+1)+1+j(\bar u+1)$, 
 where $j$ runs over 
 $1\leqslant j\leqslant p-(r^*+\bar\kappa _0+\bar\kappa _1)$.
 \end{Prop}

For $m\in\Z _{\geqslant 0}$, let  
$\c L^{\,\dag }[m]_{W(k)}$ be the $W(k)$-submodule 
of $\c L^{\dag }_{W(k)}$  
generated  
by all monomials 
$[D^{\,\dag }_{\bar\iota\bar n}]$ such that 
$\op{ch}(\iota _1)+\dots +\op{ch}(\iota _r)=m$. 
Denote by $\c L^{\dag }[m]$ the $\Z _p$-submodule of 
$\c L^{\dag }$  such that 
$\c L^{\dag }[m]\otimes W(k)=\c L^{\dag }[m]_{W(k)}$. 
\medskip

We obtained the grading $\c L^{\,\dag }=
\oplus _{m\geqslant 0}\c L^{\,\dag }[m]$ in the category of 
Lie algebras. 
If 
$\c L^{\,\dag }(w)[m]=\c L^{\,\dag }(w)\cap\c L^{\,\dag }[m]$, 
then 
$\c L^{\,\dag }(w)=\oplus_{0\leqslant m\leqslant w-1}\c L^{\,\dag }(w)[m]$. 

\begin{remark}
 The algebra $\c L^{\dag}[0]$ can be naturally identified with 
 a Lie subalgebra of $\c L$ from Section \ref{S1} via 
 $j^0: D^{\dag }_{qp^{M-1}a,n}\mapsto D_{an}$, 
 where $a$ runs over the set $\Z ^0(p)\cap [0,(p-1)v_0)$. 
 Then the special choice of $\delta _0$ and condition {\bf C3} 
 from Section\,\ref{S2.1} 
 imply that this identification is 
 compatible with filtrations, i.e. for any 
 $w\geqslant 1$, $j^0(\c L^{\dag }(w)[0])\subset \c L(w)$. 
\end{remark}

Let $\bar{\c L}^{\,\dag }=\c L^{\,\dag }/\c L^{\dag }(p)$. 
This algebra is provided with the induced 
central filtration 
$\{\bar{\c L}^{\,\dag }(w)\}_{w\geqslant 1}$  
such that $\bar{\c L}^{\,\dag }(p)=0$. 
We also have the induced gradings 
$\bar{\c L}^{\,\dag }=\oplus _{0\leqslant m\leqslant p-2}\bar{\c L}^{\,\dag }[m]$ 
and $\bar{\c L}^{\,\dag }(w)=
\oplus _{0\leqslant m\leqslant w-1}\bar{\c L}^{\,\dag }(w)[m]$, 
where 
$\bar{\c L}^{\,\dag }(w)[m]:=\bar{\c L}^{\,\dag }
(w)\cap \bar{\c L}^{\,\dag }[m]$. Note that $\bar{\c L}^{\dag }[p-1]=0$. 


Clearly, for any $m_1,m_2$, 
$[\bar{\c L}^{\,\dag }[m_1],\bar{\c L}^{\,\dag }[m_2]]
\subset \bar{\c L}^{\,\dag }[m_1+m_2]$ and we have 
the induced identification $\bar{\c L}^{\dag }[0]=
\bar{\c L}$ which is compatible with filtrations. 
(Use that all $D_{an}$ with $a\geqslant (p-1)v_0$ 
belong to ${\c L}(p)_{W(k)}$.)


\subsection{Lie algebras $\c N^{\dag }$, $\bar{\c N}^{\dag }$ 
and $\wt{\c N}^{\dag }$} \label{S2.4} 
Consider $N^*, r^*, q$ from Section\,\ref{S2.1}.

Our approach to the study of the 
ramification filtration in $\c G_{<p,M}$ substantially 
uses the construction of a totally 
ramified separable extension $\c K'$ of $\c K$ such that 
$[\c K':\c K]=q$ and 
the Herbrand function $\varphi _{\c K'/\c K}$ 
has only one edge point $(r^*,r^*)$.  
For a detailed construction of  
$\c K'$ cf.\,e.g.\,\cite{Ab3}, Sect.\,1.5. We just need that 
$\c K'=k((t_{01}))$, where our fixed local parameter $t_0$ of $\c K$ equals 
$t_{01}^qE(t_{01}^{b^*})^{-1}$ and $E=E(X)$ is the 
Artin-Hasse exponential. 

We do not need here a general concept of a lift modulo 
$p^M$ which was used e.g. in \cite{Ab2}. In the case of the field 
$\c K$ its lift $O_M(\c K)$ with respect  
to the parameter $t_0$ appears as 
$W_M(k)((t))\subset W_M(\c K)$, where $t=[t_0]$ is the Teichmuller 
representative of $t_0$. 
We need also a similar lift for $\c K'$ 
related to its local parameter $t_{01}$. So, we set 
$O'_M(\c K')=W_M(k)((t_1))\subset W_M(\c K')$, where $t_1=[t_{01}]$. 
%
%
There is no a natural embedding of 
$O_M(\c K)$ into $O'_M(\c K')$. However, 
we have the embedding  
$$\iota : \sigma ^{M-1}O_M(\c K)=W_M(k)((t^{p^{M-1}}))
\subset  O'_M(\c K')$$  
determined by the ``modulo $p^M$'' lift 
$t^{p^{M-1}}=t_{1}^{qp^{M-1}}E(t_{1}^{b^*})^{-p^{M-1}}\, $  
to $O'_M(\c K')$ 
of the ``modulo $p$'' relation 
$t_0=t_{01}^{q}E(t^{b^*}_{01})^{-1}$.

\begin{definition} 
Let ${\c N}^{\,\dag }$  
be the $W(k)$-submodule   
in ${\c L}^{\,\dag }_{O'_M(\c K')}$ 
generated by the elements  
$t_1^{-b}[D^{\,\dag }_{\bar\iota \bar n}]$, where 
$-b\geqslant -|\bar\iota |$.
\end{definition}
\medskip

Clearly, $\c N^{\dag }$ is a   
Lie $W(k)$-subalgebra in $\c L^{\dag }_{O_M'(\c K')}$. 
We can define similarly $\bar{\c N}^{\dag }$ by replacing $\c L^{\dag}$ 
to $\bar{\c L}^{\dag }$. Then a natural projection 
$\c L^{\dag }\to \bar{\c L}^{\dag }$ induces the 
projection $\c N^{\dag }\to \bar{\c N}^{\dag }$. 

\begin{definition} For $s\geqslant 0$,  
 
a) let ${\c N}^{\,\dag }\langle s\rangle $ be the 
$W(k)$-submodule in ${\c N}^{\,\dag }$ 
generated by all  
$p^{A}t_1^{-b}[D^{\,\dag }_{\bar\iota \bar n}]$, where 
$-b\geqslant -|\bar\iota |+p^{M-1}b^*U(s,A)\,;$

b) let ${\c N}^{\,\dag }(s)$ be the $W(k)$-submodule 
in ${\c N}^{\,\dag }$ generated by all ${\c N}^{\,\dag }\langle s_1\rangle [m]$ with 
$s_1+m=s$. 
\end{definition} 

We obtained a decreasing filtration $\{\c N^{\,\dag }(s)\}_{s\geqslant 0}$ 
of $\c N^{\dag }$ 
such that 
$$[{\c N}^{\,\dag }(s_1),{\c N}^{\,\dag }(s_2)]
\subset {\c N}^{\,\dag }(s_1+s_2)\, .$$

\begin{Prop} \label{P2.6} 
For any $s, m$ and $\m'=t_1W_M(k)[[t_1]]$, it holds  
$$\c N^{\dag }\langle s\rangle [m]\subset \c L^{\dag}_{\m '}+
\c L^{\dag }(1+m+s)_{O_M'(\c K')}\,.$$
\end{Prop} 

\begin{proof} If $p^At_1^{-b}[D^{\dag }_{\bar{\iota}\bar n}]
\in\c N^{\dag }\langle s\rangle [m]$ 
then 
$-b\geqslant -|\bar\iota |+p^{M-1}b^*U(s,A)$.  
If $|\bar\iota |\geqslant p^{M-1}b^*U(s,A)$ then 
$p^A[D^{\dag }_{\bar\iota \bar n}]
\in\c L^{\dag }(1+m+s)_{W(k)}\,.$  
Otherwise, $-b\geqslant 1$.  
\end{proof}

 Similar definitions can be done in the context of the Lie algebra 
 $\bar{\c L}^{\dag }$ and the  analog 
 $\bar{\c N}^{\dag }$ of 
 $\c N^{\dag }$ with the corresponding filtration 
 $\bar{\c N}^{\dag }(s)$. The projection 
 $\c N^{\dag }\to\bar{\c N}^{\dag }$ allows us to 
 transform above properties to $\bar{\c N}^{\dag }$. In particular, 
 we have the following 
 
 \begin{Cor} \label{P2.7} 
 a) $\bar{\c N}^{\dag }(p-1)
 \subset \bar{\c L}^{\dag}_{\m '}$;
 
 b) if $p^At^{-b}[D^{\,\dag }_{\bar\iota\bar n}]\in\bar{\c N}^{\,\dag }$ is non-zero then 
 $|\bar\iota |\in\frak{A}^0$;
 
 c) if $p^At^{-b}[D^{\,\dag }_{\bar\iota\bar n}]\notin\bar{\c N}^{\,\dag }(p-1)$ 
 then $-b\leqslant p^{M-1}(p-1)b^*$. 
 \end{Cor}

\begin{definition} Let  $\wt {\c N}^{\,\dag }$ be the submodule of 
$\bar{\c N}^{\,\dag }/\bar{\c N}^{\,\dag }\langle p-1\rangle $ such that 
its $W(k)$-extension of scalars is generated by 
the elements of the form 
$t^{-\jmath }[D^{\dag }_{\bar\iota \bar n}]$, where $\jmath \in\frak{A}^0$. 
 \end{definition} 
 
 Clearly, $\wt{\c N}^{\,\dag }$ is a Lie subalgebra of 
 $\bar{\c N}^{\,\dag }/\bar{\c N}^{\,\dag }(p-1)$ and it has a double grading 
 via submodules 
 $\wt{\c N}^{\,\dag }[m,t]$ generated by the elements of the form 
 $t^{-j}[D^{\,\dag }_{\bar\iota\bar n}]$, where $\op{ch}(j)=t$ and 
 $\op{ch}(\bar\iota )=m$. Here $0\leqslant m\leqslant t\leqslant p-2$. 
 This Lie algebra plays a key role in our further 
 interpretation of the ramification criterion.

\subsection{The action $\Omega $} \label{S2.5}
 Suppose $\gamma\in\Z /p^M$. 

For   
$b =p^m(qp^{M-1}\alpha -(q-1)\beta )\in \mathfrak{A}^0$ 
and $l=p^At_1^{-b}[D^{\,\dag }_{\bar
\iota\bar n}]\in\wt{\c N}^{\,\dag }\,$, 
set  
$$\Omega (l):=E(t_1^{b^*})^{(p^m\alpha )p^{M-1}}l\,,$$ 
where $E(X)$ is the Artin-Hasse exponential. 
We have 
$$E(t_1^{b^*})^{(p^m\alpha )p^{M-1}}\equiv 
\wt{\exp }\left ((p^m\alpha )S_{b^*}\right )\op{mod}\,S_{b^*}^p\,,$$
where $S_{b^*}=t_1^{b^*p^{M-1}}+pt_1^{b^*p^{M-2}}+\ldots +p^{M-1}t_1^{b^*}$ and 
$\wt{\exp}$ is the truncated exponential. 
Therefore, 
$\Omega (l)=\wt{\exp}\left ((p^m\alpha )S_{b^*}\right )l$. 

\begin{remark}
 Note that $S_{b^*}^p\bar{\c N}^{\dag }\subset \bar{\c N}^{\dag }(p)$. So, 
 it is essential that we could define 
 the action $\Omega $ in the context of $\wt{\c N}^{\dag }$ modulo 
 $\bar{\c N}^{\dag }(p-1)$ (which contains $\bar{\c N}^{\,\dag }(p)$). 
 At the same time, $\wt{\c N}^{\dag }$ is 
 still sufficient to recover the Lie algebra $\bar{\c L}^{\dag }$. Indeed, 
 the natural embedding 
 $\bar{\c L}^{\dag }\to \bar{\c N}^{\dag }
 \subset \bar{\c L}^{\dag }_{O'_M(\c K')}$ still induces embedding 
 $\bar{\c L}^{\dag }\to \wt{\c N}^{\dag }$ after taking the projection from 
 $\bar{\c N}^{\dag }$ to $\wt{\c N}^{\dag }$. 
 (Use that $\bar{\c N}^{\dag }(p-1)\subset\bar{\c L}^{\dag }_{\m '}$.)
\end{remark}

By linearity, $\Omega $ (when considered in the context of 
$\wt{\c N}^{\dag }$) can be extended to the 
whole $\wt{\c N}^{\,\dag }$. 
 
The map 
$n\mapsto \Omega _{\gamma }(n):=
\Omega ^{\gamma }(n)$, $\gamma\in\Z /p^M$, 
is a 
well-defined action of the additive group 
$\Z /p^M$ on 
the Lie algebra $\wt{\c N}^{\,\dag }$. This action is 
unipotent: for any $n\in \wt{\c N}^{\,\dag }[m,t]$, 
$\Omega _{\gamma }(n)-n\in \oplus _{t'>t}\wt{\c N}^{\,\dag }[m,t']\,.$ 
\medskip  

Let $\bar{e}^{\,\dag }\in\bar{\c N}^{\dag }$ be such that  
\begin{equation} \label{E2.1} \ \ \bar{e}^{\,\dag }\equiv 
\sum\limits _{\iota\in\mathfrak{A}^0(p)}t_1^{-\iota }D^{\,\dag}_{\iota 0}
\,\op{mod}\,C_2(\bar{\c L}^{\dag }_{O_M'(\c K')}) \,.
\end{equation} 

\begin{remark}
 Such an element is used usually in the nilpotent Artin-Shreier theory, 
 cf. Section\,\ref{S3}. In the above expression the element $D^{\dag }_{00}$ 
 is not defined. In this and similar cases for the element $e$ from Section\,\ref{S3} 
 we set $D^{\dag }_{00}:=\alpha D^{\dag }_0$, where $\alpha =\hat\alpha 
 \,\op{mod}\, p^M\in W_M(k)$ and the trace of $\hat\alpha $ over $\Q _p$ is equal to 1. 
\end{remark}

Denote by $\wt{e}^{\,\dag }$ the image of 
$\bar e^{\,\dag}$ in $\wt{\c N}^{\,\dag }$.

A special choice of $\bar{e}^{\,\dag }$ allows us to associate 
to the 
above defined action $\Omega _{\gamma }$ on $\wt{\c N}^{\dag }$ the  
\lq\lq conjugated\rq\rq\ action  
$A^{\,\dag }_{\gamma }$ on $\bar{\c L}^{\,\dag }$ as follows. 

\begin{Prop} \label{P2.8}
For $\gamma\in\Z /p^M$, there are unique 
$\wt{c}_{\gamma }\in 
\wt{\c N}^{\,\dag }\langle 1\rangle $ and 
 $A^{\,\dag }_{\gamma }\in \Aut _{\op{Lie}}\bar{\c L}^{\,\dag }$ such that 
 \medskip 
 
{\rm a)}\ $\sigma \tilde c_{\gamma }\in
\wt{\c N}^{\,\dag }\langle 1\rangle $ 
and $ \Omega _{\gamma } (\wt{e}^{\,\dag })=
(\sigma \wt{c}_{\gamma })\circ (A^{\,\dag } _{\gamma }
\otimes\id _{\c K}) \wt{e}^{\,\dag }\circ (- \wt{c}_{\gamma })$;
\medskip 

{\rm b)}\ for any $\iota\in\mathfrak{A}^{0}(p)$, 
$A^{\,\dag }_{\gamma }(D^{\,\dag }_{\iota 0})-D^{\,\dag }_{\iota 0}
\in \underset{m<\op{ch}(\iota )}{\oplus }\bar{\c L}^{\,\dag }[m]_{W(k)}$.
\end{Prop}

\begin{proof} We need the following lemma. 

\begin{Lem} \label{L2.9}
Suppose $n\in\wt{\c N}^{\,\dag}[m,t]\cap C_s
(\bar{\c L}^{\,\dag })_{O'(\c K')}$ where $t\geqslant 1$.
Then there are 
unique $\c S(n),\c R(n)
\in\wt{\c N}^{\,\dag }[m,t]\cap 
C_s(\bar{\c L}^{\,\dag })_{O'(\c K')}$ such that: 

{\rm a)} $\c R(n)=\sum \limits_{\iota}
t^{-\iota }l_{\iota }$ where $\iota $ runs over $\frak{A}^+(p)$ with  
$\op{ch}(\iota )=t$ and all 
 $l_{\iota }\in \left (\bar{\c L}^{\,\dag }[m]
 \cap C_s(\bar{\c L}^{\,\dag })\right )_{W(k)}$;
\medskip 

{\rm b)} $n=\c R(n)+(\sigma -\id )\c S(n)$. 
\end{Lem}

\begin{proof} [Proof of lemma] 
It will be sufficient to specify the elements  
$\c R(n)$ and $\c S(n)$ for 
$n=p^At_1^{-b}[D^{\,\dag }_{\bar\iota \bar n}]$, 
where $b\in\frak{A}^0$, $\op{ch}(b)=t$ and $\op{ch}(\bar\iota )=m$. 

It is essential that $\op{ch}(b)\geqslant 1$ and, therefore, $b\ne 0$. 

If $-b>0$ we set $\c R(n)=0$ and $\c S(n)=-\sum _{i\geqslant 0}\sigma ^i(n)$. 

If $-b=-\iota _0p^{i_0}<0$ with $\iota _0\in\frak{A}^+(p)$ we set 
$\c R(n)=\sigma ^{-i_0}n$ and 
$\c S(n)=\sum _{0\leqslant j<i_0}\sigma ^{-j}n$. 

The uniqueness property holds because $\op{ch}(b)\geqslant 1$ implies that 
$b\ne 0$. The lemma is proved.
\end{proof} 

Continue the proof of Prop.\,\ref{P2.8}. 
\medskip 

Use induction on $i\geqslant 1$ to proceed modulo the ideals 
$\wt{\c N}^{\,\dag }\langle 1\rangle 
\cap C_i(\bar{\c L}^{\,\dag })_{O_M'(\c K')}$. 
\medskip 

--- If $i=1$ take  $ \wt{c}_{\gamma }=0$, 
$A^{\,\dag }_{\gamma }=\id $ and use that  
$\Omega _{\gamma }(\wt{e}^{\,\dag })- \wt{e}^{\,\dag }\in 
\wt{\c N}^{\,\dag}\langle 1\rangle \,.$ 
\medskip 

--- Suppose $i\geqslant 2$,    
$ \wt{c}_{\gamma }\in\wt{\c N}^{ \dag }\langle 1\rangle $ 
and $A_{\gamma }^{\,\dag }\in
\op{Aut}_{\op{Lie}}(\bar{\c L}^{\,\dag })$ are such that   
$$H=\Omega _{\gamma } \wt{e}^{\,\dag }- 
(\sigma  \wt{c}_{\gamma })\circ (A^{\,\dag } _{\gamma }
\otimes\id _{\c K'})\wt{\,e}^{\,\dag }\circ 
(- \wt{c}_{\gamma })\in \wt{\c N}^{\,\dag }\langle 1\rangle \cap\, 
C_i(\wt{\c N}^{\,\dag })_{O_M'(\c K')} \, .$$

Then $\c R(H),\c S(H)\in \wt{\c N}^{\,\dag }\langle 1\rangle \cap\, 
C_i(\wt{\c L}^{\,\dag })_{O'_M(\c K')})$. 

Set   
$\c R(H)=\sum\limits  _{\op{ch}(\iota )
\geqslant 1+m}t^{-\iota }H_{\iota m}$, where 
all $H_{\iota m}\in \wt{\c L}^{\,\dag }[m]_k
\cap C_i(\bar{\c L}^{\,\dag })_{W(k)}$. 

Introduce $A^{\,\dag \prime}_
{\gamma }\in\Aut _{\op{Lie}}(\bar{\c L}^{\,\dag })$ 
by setting for all involved $\iota $ and $m$,  
$A_{\gamma }^{\,\dag \prime }(D^{\,\dag }_{\iota 0})=
A^{\,\dag }_{\gamma }(D^{\,\dag }_{\iota 0})-
\sum _{m}H_{\iota m}$. Set also 
$\wt{c}^{\,\prime }_{\gamma }= \wt{c}_{\,\gamma }-{\c S}(H)$. Then 
$$\Omega _{\gamma } \wt{e}^{\,\dag }\equiv  
(\sigma  \wt{c}^{\,\prime }_{\gamma 1})
\circ (A^{\,\dag \prime } _{\gamma }\
\otimes\id _{\c K})\wt{e}^{\,\dag }\circ 
(- \wt{c}^{\,\prime }_{\gamma })\,\op{mod}\,
\wt{\c N}^{\,\dag }\langle 1\rangle\cap C_{i+1}(\wt{\c N})_{O'_M(\c K')} \, .$$

The uniqueness follows similarly by induction on 
$i$ and the uniqueness part of 
Lemma \ref{L2.9}. 

The proposition is proved. 
\end{proof} 

We have also the following properties. 

\begin{Cor} \label{C2.10} 
 For any $\gamma ,\gamma _1\in\Z /p^M$, 
 
 {\rm a)}\ $A^{\,\dag } _{\gamma +\gamma _1}=
A^{\,\dag } _{\gamma }A^{\,\dag } _{\gamma _1}$;
 
 {\rm b)}\ $\Omega _{\gamma }(\wt{c}_{\gamma _1})
\circ (A^{\,\dag } _{\gamma _1}\otimes\id _{\c K}) \wt{c}_{\gamma }=
  \wt{c}_{\gamma +\gamma _1}$;
 
 {\rm c)}\ if $l\in\bar{\c L}^{\,\dag }[m]$ then 
$A^{\,\dag }_{\gamma }(l)-l\in \oplus _{m'<m}
\bar{\c L}^{\,\dag }[m']$, e.g.  
$A^{\,\dag } _{\gamma }|_{\bar{\c L}^{\,\dag }[0]}=\id \, .$
\end{Cor}

\subsection{The action $\Omega _U$} \label{S2.6}

Let $A^{\,\dag }:=A^{\,\dag }_{\gamma }|_{\gamma =1}$. 
Then for any $\gamma =n\,\op{mod}\,p^M$,   
$A^{\,\dag }_{\gamma }=A^{\,\dag n}$, in particular,  
$A^{\,\dag p^M}=\id _{\bar{\c L}^{\,\dag }}$. 
By part c) of the above corollary, 
for all $m\geqslant 0$,  
there is a differentiation $B^{\,\dag }\in
\op{End} _{\op{Lie}}\bar{\c L}^{\,\dag }$ such that 
for all $m\geqslant 0$, 
$B^{\,\dag }(\bar{\c L}^{\,\dag }[m])
\subset\oplus _{m'<m}\bar{\c L}^{\,\dag }[m']$ 
and for all $\gamma\in\Z /p^M$, 
$A^{\,\dag } _{\gamma }=\wt{\exp }(\gamma B^{\,\dag })$. 

The derivation $B^{\dag }$ can be recovered via methods from 
papers \cite{Ab12}-\cite{Ab13}. 

Let $U$ be an indeterminate. 
If  
$b =p^m(qp^{M-1}\alpha -(q-1)\beta )\in \mathfrak{A}^0$ 
and $l=p^At_1^{-b}[D_{\bar\iota\bar n}]\in\wt{\c N}^{\,\dag }\,$ set 
$$\Omega _{U}l:=
\wt{\exp}( U\otimes (p^m\alpha )S_{b^*})l\,\in \Z /p^M[U]
\otimes \wt{\c N}^{\,\dag }\, .$$ \

Proceeding similarly to the proof of Proposition\,\ref{P2.8} we obtain 
\begin{equation} \label{E2.3}
\Omega _U(\wt{e}^{\,\dag})=\sigma (\wt c_U)
\circ (A^{\,\dag }_U\otimes\id )\wt{e}^{\,\dag }\circ (-\wt c_U)\, ,
\end{equation} 
where   
$A^{\,\dag }_U=\wt{\exp}(UB^{\,\dag })$.  

It is easy to see that we have the action 
of the additive group $\mathbb{G}_a
=\op{Spec}\,\Z /p^M[U]$ on $\wt{\c N}^{\,\dag }$. 
Similarly to \cite{Ab15} and by taking into account 
the compatibility of this action with the grading on  
$\wt{\c N}^{\,\dag }$ we obtain the following:
\medskip 

\begin{Prop} \label{P2.11} \ 

{\rm a)}\ $\wt{c}_U= \wt{c}^{\,(1)}U+\dots +
 \wt{c}^{\,(p-1)}U^{p-1}$, where all $\wt{c}^{(j)},\sigma \wt{c}^{(j)}
 \in\wt{\c N}^{\,\dag }\langle j\rangle $;   
\medskip

{\rm b)}\ the cocycle $ \wt{c}_U$ is determined uniquely by its 
linear part $\wt{c}^{\,(1)}$; 
\medskip 

{\rm c)}\ the action $\Omega _U=\sum _{0\leqslant i<p}\Omega ^{\,(i)}U^i$ 
(here $\Omega ^{(0)}=\id $) is recovered uniquely from its 
differential $d\,\Omega _U:=\Omega ^{(1)}U$. 
\medskip 

{\rm d)} $A^{\dag }_U=\wt{\exp}B^{\dag }_U$ and 
$B^{\dag }_U(\bar{\c L}^{\dag }[m])\subset \bar{\c L}^{\dag }[m-1]$. 
\end{Prop} 

\begin{remark} Prop.\,\ref{P2.11}  is directly related to  
Corollary\,\ref{C2.10} as follows: 
\medskip 

 1) For any $\gamma\in\Z /p^M$, $\wt{c}_U|_{U=\gamma }=\wt{c}_{\gamma }$ 
 and $\Omega _U|_{U=\gamma }=\Omega _{\gamma }$.
 \medskip 
 
 2) Vice versa, the coefficients $\wt{c}^{(j)}$ can be uniquely recovered 
 from the values $\wt{c}_U|_{U=[\alpha ]}$, where $[\alpha ]$ are 
 the Teichmuller representatives of $\alpha\in\F _p$. (Use that 
 the matrix of Teichmuller representatives 
 $(([i\,\op{mod}\,p]^j))_{1\leqslant i,j<p}$, is non-degenerate.) 
\end{remark}

\begin{Cor}\label{C2.12} For $m\in\mathbb{N}$, 
 $B^{\,\dag }(\wt{\c L}^{\,\dag }[m])\subset \wt{\c L}^{\,\dag }[m-1]$. 
\end{Cor}

\subsection{Ideals 
$\bar{\c L}^{\,\dag [v_0]}$ and $\bar{\c L}^{[v_0]}$} 
\label{S2.7} \ \

Recall that  $\bar{\c L}^{\,\dag }[0]$ is the minimal Lie 
subalgebra of $\bar{\c L}^{\,\dag }$  
containing all $D^{\,\dag }_{\iota n}$ 
with $\op{ch}(\iota )=0$. 
By Remark from Sect.\,\ref{S2.3}, $\c L^{\dag }[0]_{W(k)}$ is generated by 
\begin{equation} \label{E2.3} 
\{D^{\dag }_{qp^{M-1}a,n}\ |\ a\in\Z ^0(p)\cap [0,(p-1)v_0), n\in\Z /N_0\, \}\,.
\end{equation}
Recall also that $\c L^{\dag }[0]$ is naturally identified with a  
Lie subalgebra of $\c L$ via 
$j^0: D^{\dag }_{qp^{M-1}a,n}\mapsto D_{an}$. 
This identification agrees with filtrations and plays 
a crucial role below, where we work with the criterion from 
Prop.\,\ref{P3.2} in the algebra $\c L^{\dag}[0]$. 
\medskip

Note also that ${\c L}^{\,\dag }[0]$ has the induced filtration 
$\{{\c L}^{\,\dag }(w)[0]\}_{w\geqslant 1}$ and \
there is epimorphism of filtered Lie algebras 
$\c V^{\,0}:{\c L}^{\,\dag }\To {\c L}^{\,\dag }[0]$ such that 
$D^{\,\dag }_{\iota n}\mapsto D^{\,\dag }_{\iota n}$ if 
$\op{ch}(\iota )=0$ and $D^{\,\dag }_{\iota n}\mapsto 0$, otherwise. 
Similarly, we have the epimorphism $\bar{\c V}^0:
\bar{\c L}^{\dag }\to\bar{\c L}^{\dag }[0]$ and the identification 
$\bar {\jmath }^0:\bar{\c L}^{\dag }[0]\simeq \bar{\c L}$ of filtered Lie algebras.

Let $\bar{\c V}:=\bar{j^0}\bar{\c V}^0:\bar{\c L}^{\,\dag }\To\bar{\c L}$.

Define the ideal $\bar{\c L}^{\,\dag [v_0]}$ as the minimal 
ideal in $\bar{\c L}^{\,\dag }$ containing all 
$B^{\,\dag }(\Ker\,\bar{\c V})$.  
Set $\bar{\c L}^{[v_0]}=\bar{\c V}(\bar{\c L}^{\,\dag [v_0]})$.  

\begin{definition}
 Let $\mathfrak{A}^0_1=\{\iota\in\mathfrak{A}^0\ |\ \op{ch}(\iota )=1\}$ 
 and 
 $\mathfrak{A}_1^+(p)=\mathfrak{A}^0_1\cap \mathfrak{A}^+(p)$.
\end{definition}

\begin{Prop} \label{P2.13} 
$\bar{\c L}^{[v_0]}$ is the minimal ideal in $\bar{\c L}$ such that 
for all $\iota\in \frak{A}_1^+(p)$,  
$\bar{\c V}(D^{\dag }_{\iota 0})\in\bar{\c L}^{[v_0]}_{W_M(k)}$. 
\end{Prop} 

\begin{proof} Clearly, $\Ker \bar{\c V}$ is generated by 
the elements of $\bar{\c L}^{\,\dag }[m]$ with $m\geqslant 1$. 
Therefore, for $m\geqslant 2$, 
$B^{\,\dag }(\bar{\c L}^{\,\dag }[m])
\subset \bar{\c L}^{\,\dag }[m-1]\subset \Ker \bar{\c V}$. As a result, 
the elements of $\bar{\c V}B^{\dag }(\bar{\c L}^{\,\dag }[1])$ 
generate the ideal 
$\bar{\c L}^{[v_0]}$. If $r(\bar\iota )>1$ and $\op{ch}(\bar\iota )=1$ 
then $l=[D^{\dag }_{\bar\iota \bar n}]$  can be written 
in the form $[D^{\dag }_{\iota n}, l']$ where $\iota\in\frak{A}_1^+(p)$ 
and 
$l'\in \wt{\c L}^{\,\dag }[0]_{W_M(k)}$. This implies that 
$\bar{\c V}B^{\,\dag }(l)\in [\bar{\c L}^{[v_0]}, \bar{\c L}]_{W_M(k)}$. 

The proposition is proved. 
\end{proof}

\begin{Prop} \label{P2.14} 
 There are unique elements $\bar{c}^{(j)}\in\bar{\c N}^{\dag }(j)$ such that 
 $\pi (\bar{c}^{(j)})=\wt{c}^{(j)}$  
and  
 $$\Omega (\bar{e}^{\,\dag})=\sigma (\bar c)
\circ (A^{\dag }\otimes\id _{O'_M(\c K')})\bar{e}^{\,\dag }\circ (-\bar c)\, $$
where $\bar c=\bar c^{(1)}+\dots +\bar c^{(p-1)}$.
\end{Prop}
\begin{proof}
 It follows easily from the fact that 
 $\bar{\c N}^{\dag }(p-1) \subset \bar{\c L}^{\dag}_{\m '}$ 
 and, therefore, the map 
 $\sigma -\id $ is a bijection on $\bar{\c N}^{\dag }(p-1)$.
\end{proof}


\section{Relation to Galois theory} \label{S3}

In \cite{Ab2} we developed a nilpotent analogue of the classical 
Artin-Schreier theory of cyclic extensions of fields of characteristic $p$. Below 
we are going to use the covariant analogue of this theory, 
cf.\,the discussion in \cite{Ab9}, 
for an explicit description of the ramification subgroup 
$\c G^{(v_0)}_{<p,M}$ of $\c G_{<p,M}=
\c G/\c G^{p^{M}}C_p(\c G)$. 

Recall that $\c K=k((t_0))$ and we use 
the construction of the lift $O_M(\c K_{sep})$ of $\c K_{sep}$ 
modulo $p^M$ with respect to our given 
uniformiser $t_0$. This lift appears in the form 
$W_M(\sigma ^{M-1}\c K_{sep})[t]\subset W_M(\c K_{sep})$, where $t=[t_0]$ 
is the Teichmuller representative of $t_0$. Then 
$\c G=\op{Gal}(\c K_{sep}/\c K)$ acts on this lift and 
$O_M(\c K_{sep})^{\c G}=O_M(\c K)=W_M(k)((t))$. 
\medskip

\subsection{Groups and Lie algebras of nilpotent class $<p$, 
\cite{La}}\label{S3.1} 

Let $L$ be a finite Lie  $\Z /p^M$-algebra of 
nilpotent class $<p$, i.e. $C_p(L)=0$. 
Let $A$ be an enveloping algebra  of $L$. 
There is a natural embedding 
$L\subset A/J^p$ (here $J$ is the augmentation ideal of $A$) 
and in terms of this embedding 
the Campbell-Hausdorff formula appears as    
$$(l_1,l_2)\mapsto l_1\circ l_2=
l_1+l_2+\frac{1}{2}[l_1,l_2]+\dots ,\ \ l_1,l_2\in L\, ,$$
where $\wt{\exp}(l_1)\cdot 
\wt{\exp}(l_2)\equiv \wt{\exp}(l_1\circ l_2)\,\op{mod}\,J^p$.   
This composition law provides the set $L$ with 
a group structure and we denote this group by $G(L)$. 
Clearly, $G(L)$  has period $p^M$ and nilpotent class $<p$. 
Then the correspondence 
$L\mapsto G(L)$ induces  
equivalence of the category of finite $p$-groups of 
period $p^M$ and nilpotent class $s<p$ 
and the category of finite Lie 
$\Z /p^{M}$-algebras of the same nilpotent class $s$. 
This  
equivalence can be naturally extended to similar  categories of 
pro-finite objects. 
\medskip

\subsection{Epimorphism  $\eta ^0$} \label{S3.2} 

Let $L$ be a finite 
$\Z /p^{M}$-Lie algebra of nilpotent class $<p$. 
Set  $L_{sep}:=L_{O_M(\c K_{sep})}$. 
Then  the elements of 
$\c G=\Gal (\c K_{sep}/\c K)$ and  Frobenius 
$\sigma $ act on $L_{sep}$ through the second factor, 
$L_{sep}|_{\sigma =\id}=L$ and $(L_{sep})^{\c G}=L_{O_M(\c K)}$. 
The covariant nilpotent Artin-Schreier theory states that  
for any $e\in G(L_{O_M(\c K)})$,  the set 
$$\c F(e)=\{f\in G(L_{sep})\ |\ \sigma (f)=e\circ f\}$$  
is not empty and for any fixed $f\in \c F(e)$, the map  
$\tau\mapsto (-f)\circ \tau (f)$ is a continuous group homomorphism 
$\pi _f(e):\c G\To G(L)$.

Suppose $\c L$ is the Lie algebra from Section \ref{S1.1} and 
$e=e(t)\in \c L_{\c K}$ satisfies $e\equiv \sum _{a\in\Z ^0(p)}
t^{-a}D_{a0}\,\op{mod}\, C_2(\c L)_{O_M(\c K)}$. Recall that for $n\in\Z /N_0$, 
$D_{0n}$ is defined as 
$\sigma ^n(\alpha _0)D_0$, where $\alpha _0=\hat\alpha _0\,\op{mod}\,p^M$ 
and $\hat{\alpha} _0\in W(k)$ has absolute trace 1. 
(We followed this agreement in \cite{Ab1}-\cite{Ab3} 
to relate the identification $\eta ^0\,\op{mod}\,C_2(\c G)$, 
cf.\,below, to the  identification of class field theory.)
Then a profinite version of the above Artin-Schreier 
theory gives the existence of
$f\in \c L_{sep}$ such that 
$\sigma f=e\circ f$ and the 
map 
$\eta ^0_f:=\pi _f(e)\,\op{mod}\,\c G^{p^{M}}C_p(\c G)$ 
induces a group isomorphism 
$\eta ^0_f:\c G_{<p}\simeq G(\c L)$. We fix a choice of such $f$ 
and use the simpler notation $\eta ^0=\eta ^0_f$. 

Theorem\,\ref{T3.1} below requires the following  
assumption:
\medskip 

$\bf A)$ {\it Let $\c N$ be a $W(k)$-Lie subalgera of $\c L_{O_M(\c K)}$ 
generated by all $t^{-a}D_{an}$, $a\in\Z ^0(p)$. Then 
$e\in\c N$ and $e\equiv\sum _{a\in\Z ^0(p)}t^{-a}D_{a0}
\,\op{mod}\,[\c N,\c N]$.}

\begin{Thm} \label{T3.1} 
 $\eta ^0(\c G^{(v_0)})=\c L^{[v_0]}$.
\end{Thm}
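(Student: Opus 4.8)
\medskip

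The plan is to reduce the statement to a computation inside the auxiliary Lie algebra $\bar{\c L}^{\,\dag }$ via the field-of-norms-type extension $\c K'/\c K$, and then to identify the ramification ideal with the explicitly constructed ideal $\bar{\c L}^{[v_0]}$. First I would observe that, by the Example in Section \ref{S1.2}, $\c L(p)\subset\c L^{(v_0)}$ (since $a\geqslant (p-1)v_0$ forces $D_{an}\in\c L(p)_{W_M(k)}$, and more carefully $\c L(p)$ is killed by the ramification filtration at level $v_0$ because $r^*<v_0$ sits above all relevant breaks); hence it suffices to describe the image $\bar{\c L}^{(v_0)}$ of $\c L^{(v_0)}$ in $\bar{\c L}=\c L/\c L(p)$, and the assertion becomes $\bar\eta^0(\c G_{<p,M}^{(v_0)})=\bar{\c L}^{(v_0)}$ with $\bar{\c L}^{(v_0)}=\bar{\c L}^{[v_0]}$. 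Next I would pass to $\c K'$: the ramification subgroup $\c G_{<p,M}^{(v_0)}$ is computed, via Herbrand's theorem and the fact that $\varphi_{\c K'/\c K}$ has its unique edge at $(r^*,r^*)$ with no breaks in $(r^*,v_0)$, from the behaviour of $e$ after base change to $O'_M(\c K')$. The substitution $t_0=t_{01}^qE(t_{01}^{b^*})^{-1}$ transforms the admissible exponents $t^{-a}$ into the exponents $t_1^{-\iota}$, $\iota\in\mathfrak A^0(p)$, appearing in $\bar e^{\,\dag}$; this is exactly the reason for the definitions in Section \ref{S2}.

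\medskip

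The heart of the argument is the deformation principle. I would use the action $\Omega_\gamma$ (equivalently $\Omega_U$) on $\wt{\c N}^{\,\dag}$ and the relation $\Omega(\bar e^{\,\dag})=\sigma(\bar c)\circ(A^{\dag}\otimes\id)\bar e^{\,\dag}\circ(-\bar c)$ from Proposition \ref{P2.12}. Since $\Omega_U$ encodes the effect of the elements $\gamma\in\Z/p^M\simeq\Gal(\c K'/\c K)$ acting on uniformisers by $t_1\mapsto t_1$ but $t\mapsto t\,E(t^{b^*})^{\gamma p^{M-1}}$ — i.e. the Galois action of the cyclic subextension cut out by $b^*$ — the automorphism $A^{\dag}_\gamma=\wt\exp(\gamma B^{\dag})$ is precisely the image in $\Aut_{\mathrm{Lie}}\bar{\c L}^{\dag}$ of that Galois action under the identification $\bar\eta^{\dag}$. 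Now the ramification subgroup at level $v_0$ of a cyclic extension is generated (after transport through $\eta^0$) by the \emph{difference} $A^{\dag}-\id$, whose leading term is the derivation $B^{\dag}$; more precisely, $\c G_{<p,M}^{(v_0)}$ corresponds, modulo the part coming from $\Ker\bar{\c V}$ which is killed anyway, to the ideal generated by $\bar{\c V}B^{\dag}(\Ker\bar{\c V})$. Feeding in Proposition \ref{P2.11}, which says this ideal is the minimal one containing $\bar{\c V}(D^{\dag}_{\iota 0})$ for $\iota\in\mathfrak A_1^+(p)$, and Proposition \ref{P2.4}b), which pins down the $p$-adic valuations $\iota p^{-v_p(\iota)}\geqslant qv_0-b^*$ so that these generators match the ramification jump computed by Proposition \ref{P2.1}, completes the identification.

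\medskip

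In more detail, the key steps in order are: (1) reduce to $\bar{\c L}$ using $\c L(p)\subset\c L^{(v_0)}$; (2) via assumption $\mathbf{A)}$, lift $\bar e$ to a compatible $\bar e^{\,\dag}\in\bar{\c N}^{\dag}$ and obtain the unique $\bar\eta^{\dag}$ with $\bar{\c V}\bar\eta^{\dag}=\bar\eta^0$; (3) show that the $\Z/p^M$-action on $\c K'_{<p,M}$ induced by $\Gal(\c K'/\c K)$ translates under $\bar\eta^{\dag}$ into the automorphisms $A^{\dag}_\gamma$ of Proposition \ref{P2.6} — this uses the explicit uniformiser relation and the Artin-Hasse exponential identity $E(t_1^{b^*})^{\gamma p^{M-1}}$; (4) apply the Herbrand-function computation for $\c K'/\c K$ to express $\c G_{<p,M}^{(v_0)}$ in terms of $\Gal(\c K'/\c K)$-equivariance, i.e. in terms of $A^{\dag}=\wt\exp B^{\dag}$; (5) extract the derivation $B^{\dag}$ and conclude $\bar{\c L}^{(v_0)}=\bar{\c V}B^{\dag}(\Ker\bar{\c V})=\bar{\c L}^{[v_0]}$ by Propositions \ref{P2.11}, \ref{P2.4}, \ref{P2.1}. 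The main obstacle I expect is step (3)–(4): making rigorous the passage from "$\Gal(\c K'/\c K)$ acts on the deformation by $\Omega_\gamma$" to "$\c G_{<p,M}^{(v_0)}$ is cut out by $A^{\dag}-\id$", which requires carefully controlling the cocycle $\bar c$ — showing its contribution lies in $\Ker\bar{\c V}$ plus higher filtration and therefore does not enlarge the ideal after applying $\bar{\c V}$ — together with an induction on nilpotence class matching the filtration $\bar{\c L}^{\dag}(w)$ against the ramification filtration break at $v_0$. The combinatorial inputs (Propositions \ref{P1.3}–\ref{P1.5}) guarantee that the filtrations on both sides are the same minimal central filtration, which is what forces the two ideals to coincide rather than merely one containing the other.
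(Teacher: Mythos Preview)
Your outline misidentifies the role of the action $\Omega _\gamma $. You write that $\Omega _U$ ``encodes the effect of the elements $\gamma \in \Z /p^M\simeq \Gal (\c K'/\c K)$'', but $\c K'/\c K$ has degree $q=p^{N^*}$, not $p^M$, and in any case $\Omega $ is not a Galois action: it is the formal deformation obtained by expanding the change of uniformiser $t^{p^{M-1}}=t_1^{qp^{M-1}}E(t_1^{b^*})^{-p^{M-1}}$ as a power series in $S_{b^*}$. Thus the automorphism $A^{\dag }=\wt{\exp}B^{\dag }$ is \emph{not} the image of a Galois element under $\bar\eta ^{\dag }$, and your step~(4) --- ``express $\c G_{<p,M}^{(v_0)}$ in terms of $\Gal (\c K'/\c K)$-equivariance, i.e.\ in terms of $A^{\dag }$'' --- has no meaning as stated. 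There is no cyclic subextension whose ramification jump is being read off from $A^{\dag }-\id $.

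What the paper actually does, and what your plan is missing, is the criterion of Proposition~\ref{P3.2}: one introduces $X\in \c L_{sep}$ with $\Omega (\sigma ^{M-1}e_1^{(q)})=\sigma X\circ \sigma ^{M-1}e_1^{(q)}\circ (-X)$, and then $\c L^{(v_0)}$ is characterised as the minimal ideal $\c I$ for which the field of definition $\c K'(X\,\op{mod}\,\c I_{sep})$ has upper ramification $<qv_0-b^*$. This criterion (imported from \cite{Ab1,Ab2,Ab3}) is the only bridge between the Galois side and the Lie-algebra side, and it must be invoked explicitly. The inclusion $\c L^{(v_0)}\subset \c L^{[v_0]}$ then follows immediately because $\bar{\c V}^0$ applied to \eqref{E3.2} shows that $X$ is congruent modulo $\c L^{[v_0]}$ to an element $x$ defined over $\c K'$. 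The reverse inclusion is \emph{not} a formal consequence of Propositions~\ref{P2.4} and~\ref{P2.11}; it is obtained by an induction on the nilpotence class (Proposition~\ref{P3.4}) which simultaneously proves $\c L(w)\subset \c L^{(v_0)}+C_w(\c L)$ --- in particular your ``preliminary'' claim $\c L(p)\subset \c L^{(v_0)}$ is not available at the outset, it is established only at the end of this induction. The inductive step analyses $\sigma X-X$ via Lemma~\ref{L3.5}, separates the linear and nonlinear parts of the resulting element $y$, and uses Proposition~\ref{P2.4}(b) to verify that each surviving term contributes upper ramification $\geqslant qv_0-b^*$; this is where the inequality you cite actually enters.
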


\begin{remark} The above quite general result will 
be explicitly specified in 
Sec.\,\ref{S4}, where we present another proof 
of the explicit description of $\c L^{[v_0]}$     
in terms of generators 
$\{D_{an}\ |\ a\in\Z ^+(p), n\in\Z/N_0\}\cup\{D_0\}$ of $\c L_{W(k)}$ from \cite{Ab3}.  
\end{remark}

The proof of Theorem \ref{T3.1} is given below in 
sections \ref{S3.3}-\ref{S3.5}.

\subsection{The criterion} \label{S3.3} 

Consider the parameters $N^*, r^*, q$ from Section\,\ref{S2.1} and  
the field extension $\c K'/\c K$ from Section\,\ref{S2.4}. 
Recall that $\c K'=k((t_{01}))$ and we use the local parameter 
$t_{01}$ to construct 
the lifts $\ O'_M(\c K')=W_M(k)((t_1))$ and 
$O'_M(\c K'_{sep})=W_M(\sigma ^{M-1}\c K')[t_1]$, where $t_1=[t_{01}]$. 
There is a natural embedding of $W(k)$-algebras 
$\iota :\sigma ^{M-1}O_M(\c K)\to O'_M(\c K')$. 
This allows us to   
identify the Lie $W(k)$-algebra $\c N^{\dag }[0]$ with 
subalgebra of $\c L_{O'_M(\c K')}$. 
In particular, 
$\sigma ^{M-1}e(t)$ is identified with   
$\Omega \left (\sigma ^{M-1}e(t_1^q)\right )$, cf. Section \ref{S2.5}. 

Clearly, we have $\sigma ^M(f)=
\sigma ^{M-1}e(t)\circ \sigma ^{M-1}f$ and 
for any $\tau\in\c G$, 
$$\eta ^0(\tau )=(-\sigma ^{M-1}f)\circ 
\tau (\sigma ^{M-1}f)
=(-f)\circ \tau (f)\, .$$  

Choose $f_1\in {\c L}_{O'_M(\c K_{sep})}$ such 
that $\sigma f_1=e(t_1)\circ f_1$ and consider 
$X\in{\c L}_{O'_M(\c K_{sep})}$ such that 
$$\sigma ^{M-1}f=X\circ (\sigma ^{M-1+N^*}f_1)\,.$$

For $Y\in \c L_{O'_M(\c K_{sep})}$ and an ideal $\c I$ 
in $\c L$,
define the field of definition  
of $Y\,\op{mod}\,\c I_{O'_M(\c K_{sep})}$ over  $\c K'$ 
as 
$\c K'(Y\,\op{mod}\,\c I_{O'_M(\c K_{sep})}):=\c K_{sep}^{\c H}$,  
where 
$$\c H=\{g\in\c G'=
\op{Gal}(\c K_{sep}/\c K')\ |\ (\id _{\c L}\otimes g)Y
\equiv Y\,\op{mod}\,\c I_{sep}\}\,.$$ 

For a field extension $\c E'/\c K'$ in $\c K_{sep}$,  
define the biggest upper ramification number 
$v(\c E'/\c K')=\op{max}\{v\ |\ \Gal (\c K_{sep}/\c K')^{(v)} 
\text{acts non-trivially on } \c E'\}\, .$ 
 
Set $e_1^{(q)}:=e(t_1^{q})$. Then   
$\Omega \left (\sigma ^{M-1}e_1^{(q)}\right )=
e(t_1^{qp^{M-1}}E(t_1^{b^*})^{-p^{M-1}})$. 
The methods from \cite{Ab1, Ab2, Ab3} are based 
on the following criterion.

\begin{Prop} \label{P3.2} 
If $X\in \c L_{sep }$ is such that 
\begin{equation} \label{E3.1}
\Omega \left (\sigma ^{M-1}e_1^{(q)}\right )=
\sigma X\circ \sigma ^{M-1}e_1^{(q)}\circ (-X)\, 
\end{equation} 
then $\c L^{(v_0)}$ is the minimal ideal in the set of all ideals 
$\c I$ of $\c L$ such that 
$$v(\c K'(X\,\op{mod}\,\c I_{O'_M(\c K_{sep})})/\c K')< qv_0-b^*\,.$$ 

\end{Prop}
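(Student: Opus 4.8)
The plan is to trace carefully how the ramification filtration of $\c G_{<p,M}$ transports through the chain of identifications relating $\c K$, the auxiliary extension $\c K'$, and the lifts $O'_M$. The starting point is the relation $\sigma^{M-1}f = X\circ(\sigma^{M-1+N^*}f_1)$ together with the two ``Artin--Schreier'' equations $\sigma f = e(t)\circ f$ and $\sigma f_1 = e(t_1)\circ f_1$. Applying $\sigma^{M-1}$ to the first and $\sigma^{M-1+N^*}$ to the second, and using the identification $\sigma^{M-1}e(t) = \Omega(\sigma^{M-1}e_1^{(q)})$ from Sect.\,\ref{S3.3}, one computes $\sigma(\sigma^{M-1}f)$ in two ways; comparing them yields precisely equation \eqref{E3.1} for $X$ (this is a formal manipulation with the Campbell--Hausdorff law, using that $\sigma$ is a ring/Lie automorphism commuting with the relevant structures). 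So first I would establish that \eqref{E3.1} is the correct ``cocycle equation'' governing $X$, and record that $X$ is determined up to left multiplication by $G(\c L_{O'_M(\c K')})$-valued constants, which does not affect the fields of definition $\c K'(X \bmod \c I_{O'_M(\c K_{sep})})$.

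Next I would identify the field of definition $\c K'(X\bmod\c I_{O'_M(\c K_{sep})})$ in Galois-theoretic terms. Since $\eta^0\colon\c G_{<p,M}\simeq G(\c L)$ is built from $f$, and $\sigma^{M-1}f = X\circ(\text{stuff over }\c K')$, for $g\in\c G' = \Gal(\c K_{sep}/\c K')$ the element $(\id\otimes g)X\circ(-X)$ expresses the action of $g$ via $\eta^0$ restricted to $\c G'$; hence $(\id\otimes g)X\equiv X\bmod\c I_{O'_M(\c K_{sep})}$ exactly when $\eta^0(g)\in\c I$ (here one uses that $\c I$ is an ideal, so $G(\c I)$ is a normal subgroup and the congruence is conjugation-invariant). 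Therefore $\c K'(X\bmod\c I_{O'_M(\c K_{sep})})$ is the fixed field of $(\eta^0|_{\c G'})^{-1}(G(\c I))$, and its biggest upper ramification number over $\c K'$ is $\max\{v : \c G'^{(v)}\not\subset(\eta^0|_{\c G'})^{-1}(G(\c I))\}$. Via $\eta^0$ and the identity $\c G'^{(v)} = \c G_{<p,M}^{(\psi_{\c K'/\c K}(v))}\cap\c G'$ for $v$ large (Herbrand), this becomes a condition on how $\c L^{(v)}$ sits relative to $\c I$.

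Then I would use the explicit Herbrand function of $\c K'/\c K$: by Sect.\,\ref{S2.4} the function $\varphi_{\c K'/\c K}$ has a single edge point at $(r^*,r^*)$, so for $v > r^*$ one has $\psi_{\c K'/\c K}(v) = qv - (q-1)r^* = qv - b^*$. Since $r^* < v_0$ and, by the choice of $r^*$ (Sect.\,\ref{S2.1}), there are no ramification breaks of $\c G^{(\cdot)}_{<p,M}$ in the interval $(r^*,v_0)$, the ramification subgroups $\c G'^{(v)}$ for $v$ near $qv_0 - b^*$ correspond exactly to $\c L^{(v_0)}$ on the ``$\c L$-side''. Putting this together: $v(\c K'(X\bmod\c I)/\c K') < qv_0 - b^*$ holds if and only if $\c G'^{(v)}$ acts trivially for all $v\ge qv_0-b^*$, i.e. iff $\c L^{(v_0)}\subset\c I$ (using $\eta^0(\c G_{<p,M}^{(v_0)}) = \c L^{(v_0)}$ and that the relevant part of $\c L^{(v_0)}$ is already visible over $\c K'$). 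Hence $\c L^{(v_0)}$ is the minimal such ideal $\c I$, which is the claim.

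The main obstacle I expect is the careful bookkeeping at the junction of the last two steps: translating the upper-numbering ramification statement over $\c K'$ (the inequality $v(\cdots) < qv_0 - b^*$) into the inclusion $\c L^{(v_0)}\subset\c I$ on the nose, including the boundary case $v = qv_0 - b^*$. This requires (i) the precise form of $\psi_{\c K'/\c K}$ together with the fact that $qv_0 - b^*$ is the image of $v_0$ under it; (ii) the absence of ramification breaks of $\c G_{<p,M}$ strictly between $r^*$ and $v_0$, so that no intermediate ideal interferes; and (iii) Herbrand's theorem for the ramification filtration of the (infinite, but of finite nilpotence class) pro-$p$ group $\c G_{<p,M}$, applied to the closed subgroup $\c G'$. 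Once these are in place the equivalence ``$v(\cdots) < qv_0 - b^*$ $\iff$ $\eta^0(\c G^{(v_0)}_{<p,M})\subset\c I$'' is immediate, and minimality of $\c L^{(v_0)}$ follows since $\c I = \c L^{(v_0)}$ itself satisfies the strict inequality while any strictly smaller ideal would fail it at $v = v_0$.
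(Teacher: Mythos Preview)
The paper does not actually prove Proposition~\ref{P3.2}; it is stated as a known criterion imported from \cite{Ab1, Ab2, Ab3} (``The methods from \cite{Ab1, Ab2, Ab3} are based on the following criterion''), so there is no paper-proof to compare against directly. Your outline follows the natural strategy of those references: derive \eqref{E3.1} from the relation $\sigma^{M-1}f=X\circ(\sigma^{M-1+N^*}f_1)$, then translate the ramification condition over $\c K'$ into an inclusion of ideals via Herbrand, using that $\varphi_{\c K'/\c K}$ has a single edge at $(r^*,r^*)$ so that $\psi_{\c K'/\c K}(v_0)=qv_0-b^*$.

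There is, however, a genuine gap in your step~2. You write that $\sigma^{M-1}f = X\circ(\text{stuff over }\c K')$ and conclude that $g(X)\equiv X\bmod\c I_{O'_M(\c K_{sep})}$ exactly when $\eta^0(g)\in G(\c I)$. But the ``stuff'' is $\sigma^{M-1+N^*}f_1$, and $f_1$ is \emph{not} defined over $\c K'$: it lies in $\c L_{O'_M(\c K_{sep})}$ and carries its own nontrivial $\c G'$-action via the identification $\eta_1^0:\tau\mapsto(-f_1)\circ\tau(f_1)$ attached to $e(t_1)$. A direct computation gives, for $g\in\c G'$,
\[
(-X)\circ g(X)=(\sigma^{M-1+N^*}f_1)\circ\bigl(\eta^0(g)\circ(-\eta_1^0(g))\bigr)\circ(-\sigma^{M-1+N^*}f_1),
\]
so the correct equivalence is $g(X)\equiv X\bmod\c I$ iff $\eta^0(g)\equiv\eta_1^0(g)\bmod G(\c I)$, not iff $\eta^0(g)\in G(\c I)$. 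To close the argument you must control the $\eta_1^0$-term: one uses that $\eta_1^0$ is the standard nilpotent Artin--Schreier identification for $\c K'$ (same residue field, uniformiser $t_1$), so $\eta_1^0(\c G'^{(w)})$ is the ramification ideal of level $w$ for $\c K'$, and since $qv_0-b^*>v_0$ this ideal sits inside $\c L^{(v_0)}$. This extra comparison (between the ramification ideals produced by $\eta^0|_{\c G'}$ and by $\eta_1^0$, together with an inductive or limiting argument to handle the reverse inclusion) is precisely what the references \cite{Ab1, Ab2, Ab3} supply and is the missing ingredient in your sketch.
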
 

\begin{remark}
The above inequality can be rewritten also as  
$$v(\c K'(X\,\op{mod}\,\c I_{O'_M(\c K_{sep})})/\c K)< v_0\,.$$
 \end{remark}
 
%

Suppose $\wt{\c J}\subset\c L$ is a closed ideal and 
$\pi :\c L\To L:=\c L/\wt{\c J}$ 
is a natural projection. Then we can use 
the appropriate images $e_L, e_L^{(q)}\in L_{O'_M(\c K')}$ and 
$f_L, X_L\in L_{O'_M(\c K_{sep})}$ 
to state the following  analog 
of Proposition\,\ref{P3.2} for the ideal 
$L^{(v_0)}:=\pi (\c L^{(v_0)})$ of $L$. 

\begin{Prop} \label{P3.3} 
$L^{(v_0)}$ is 
the minimal ideal in the set of all ideals 
$\c I$ of $L$ such that 
$v(\c K'(X_{L}\,\op{mod}\,\c I_{sep})/\c K')< qv_0-b^*$.   
\end{Prop}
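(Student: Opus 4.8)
The plan is to deduce Proposition~\ref{P3.3} from Proposition~\ref{P3.2} by a functoriality argument, so almost no new ideas are needed. First I would record the compatibility of the Artin--Schreier data with the projection $\pi :\c L\to L=\c L/\wt{\c J}$. Since $\pi $ is a morphism of Lie $\Z/p^M$-algebras of nilpotent class $<p$, it extends after the flat base changes $O_M(\c K)$, $O'_M(\c K')$, $O'_M(\c K_{sep})$ to morphisms which commute with $\sigma $, with the $\c G$-action, with the Campbell--Hausdorff law $\circ $, and with the operator $\Omega $ (the latter because $\Omega $ is defined by multiplication by the scalar $E(t_1^{b^*})^{(p^m\alpha)p^{M-1}}$, which is insensitive to the target Lie algebra). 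Hence $e_L=\pi (e)$, $e_L^{(q)}=\pi (e^{(q)})$, one may take $f_L=\pi (f_1)$, and then $X_L:=\pi (X)$ solves the analogue of \eqref{E3.1}, namely $\Omega (\sigma^{M-1}e_{L,1}^{(q)})=\sigma X_L\circ \sigma^{M-1}e_{L,1}^{(q)}\circ(-X_L)$, because applying $\pi $ to \eqref{E3.1} gives exactly this identity.

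Next I would match the two families of ideals. Ideals $\c I$ of $L$ correspond bijectively to ideals $\widehat{\c I}=\pi^{-1}(\c I)$ of $\c L$ containing $\wt{\c J}$, and under this correspondence $\c I_{O'_M(\c K_{sep})}$ is recovered as the image of $\widehat{\c I}_{O'_M(\c K_{sep})}$. The key point is that, for such $\c I$, the field of definition is unchanged under passing between $\c L$ and $L$:
\begin{equation}\label{Efielddef}
\c K'(X_L\,\op{mod}\,\c I_{O'_M(\c K_{sep})})=\c K'(X\,\op{mod}\,\widehat{\c I}_{O'_M(\c K_{sep})})\,,
\end{equation}
since $X_L=\pi (X)$ and $g\in\c G'$ fixes $X_L$ modulo $\c I_{sep}$ if and only if it fixes $X$ modulo $\widehat{\c I}_{sep}=\pi^{-1}(\c I)_{sep}$. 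Consequently the condition $v(\c K'(X_L\,\op{mod}\,\c I_{sep})/\c K')<qv_0-b^*$ on ideals $\c I$ of $L$ is literally the restriction of the corresponding condition on ideals $\widehat{\c I}$ of $\c L$ to those containing $\wt{\c J}$.

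It remains to check that $\wt{\c J}$ itself satisfies the condition, so that the correspondence $\c I\mapsto\widehat{\c I}$ is a bijection between the two sets of admissible ideals. Indeed, taking $\c I=0$ in Proposition~\ref{P3.2} (or rather the minimality statement) shows $\c L^{(v_0)}$ is admissible, hence any ideal containing $\c L^{(v_0)}$ is admissible; but one does not a priori know $\wt{\c J}\supseteq\c L^{(v_0)}$. The clean way around this is to observe that Proposition~\ref{P3.2} already describes $\c L^{(v_0)}$ as the \emph{minimal} admissible ideal of $\c L$, so the set of admissible ideals of $\c L$ is exactly $\{\widehat{\c I}\supseteq\c L^{(v_0)}\}$; intersecting with $\{\widehat{\c I}\supseteq\wt{\c J}\}$ and using that the admissible ideals of $L$ are the images of the admissible $\widehat{\c I}\supseteq\wt{\c J}$ (via \eqref{Efielddef}), the minimal such image is $\pi (\wt{\c J}+\c L^{(v_0)})=\pi(\c L^{(v_0)})=L^{(v_0)}$, as claimed. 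The only mild subtlety — the step I expect to need the most care — is verifying that a $g\in\c G'$ which fixes $X$ modulo $\pi^{-1}(\c I)_{sep}$ really does fix $X_L$ modulo $\c I_{sep}$ and conversely, i.e.\ that base change along the (faithfully flat) extension $O'_M(\c K_{sep})/O'_M(\c K')$ commutes with forming $\pi^{-1}$ of an ideal; this is where one uses that $\c L$ is a finitely generated profinite module and $\wt{\c J}$ is closed, so $\widehat{\c I}_{sep}=\pi_{sep}^{-1}(\c I_{sep})$ with $\pi_{sep}=\pi\otimes\id$.
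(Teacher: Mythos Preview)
Your argument is correct and is exactly the functoriality reduction the paper has in mind: in the paper Proposition~\ref{P3.3} is stated without proof, immediately after introducing the images $e_L,e_L^{(q)},f_L,X_L$, as ``the following analog of Prop.~\ref{P3.2}''. Your write-up supplies precisely the missing details --- the compatibility of $\pi$ with $\sigma$, $\c G$, $\circ$ and $\Omega$, the equality of fields of definition \eqref{Efielddef}, and the identification of the minimal admissible ideal of $L$ with $\pi(\wt{\c J}+\c L^{(v_0)})=L^{(v_0)}$ --- so there is no genuine difference in approach. The one cosmetic point: your intermediate remark ``it remains to check that $\wt{\c J}$ itself satisfies the condition'' is a red herring that you immediately abandon; you might simply delete it, since the argument via the characterisation $\{\text{admissible }\widehat{\c I}\}=\{\widehat{\c I}\supset\c L^{(v_0)}\}$ (using both that $\c L^{(v_0)}$ is admissible and that the admissibility condition is upward-closed) is the clean route and does not require $\wt{\c J}$ to be admissible on its own.
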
 


\subsection{Relation between $\c L^{(v_0)}$ and $\c L^{[v_0]}$}
\label{S3.4} 

\begin{definition} 
For the projection $\op{pr} :\c L\to\bar{\c L}$, set  
$\c L^{[v_0]}:=\op{pr}^{-1}(\bar{\c L}^{[v_0]})$. 
\end{definition}

Consider equality \eqref{E3.1} from the above  criterion. 
On the other hand, according to Proposition\,\ref{P2.14} 
we have in $\bar{\c N}^{\,\dag }$ that  
\begin{equation} \label{E3.2}
\Omega \left (\sigma ^{M-1}\bar{e}^{\dag }\right )=
\sigma  (\bar{c}_1)\circ 
(A^{\dag }\otimes\id )\sigma ^{M-1}
\bar{e}^{\dag }\circ (-\bar{c}_1)\,.
\end{equation}

Applying $\bar{\c V}^0:\bar{\c L}^{\dag }\to \bar{\c L}^{\dag }[0]$ 
and using the definition of the ideal $\bar{\c L}^{[v_0]}$ 
we obtain the following congruence in $\bar{\c N}^{\dag }[0]$ 
with $\bar x=\bar{\c V}^0(\bar c_1)$:
\begin{equation}\label{E3.3} \Omega 
\left (\sigma ^{M-1}\bar{e}_1^{(q)}\right )\equiv 
\sigma (\bar x)
\circ \sigma ^{M-1}\bar{e}_1^{(q)}\circ (-\bar x)
\,\op{mod}\,\bar{\c L}^{[v_0]}_{O'_M(\c K')}\, .
\end{equation} 

Since $\c L^{\dag }[0](p)=\op{Ker} (\c L^{\dag }[0]\to\bar{\c L}^{\dag }[0])
\subset \c L^{[v_0]}$ we can take any 
$x\in \c N^{\dag }[0]$ such that 
$\bar x=x\,\op{mod}\,\c L^{\dag }[0]_{O'_M(\c K')}$ and consider  
congruence \eqref{E3.3} in $\c N^{\dag }[0]$: 
$$\Omega \left (\sigma ^{M-1}{e}_1^{(q)}\right )\equiv 
\sigma (x)
\circ \sigma ^{M-1}{e}_1^{(q)}\circ (-x)
\,\op{mod}\,{\c L}^{[v_0]}_{O'_M(\c K')}\, .$$
In particular, the criterion from Prop.\,\ref{P3.2} 
implies that ${\c L}^{(v_0)}\subset {\c L}^{[v_0]}$. 
(Use that $x$ is defined just over a trivial 
field extension $\c K'/\c K'$.)

This means also that we can assume $X\equiv x\,
\op{mod}\,\c L^{[v_0]}_{sep}$. 
\medskip 

The following proposition implies  
Theorem \ref{T3.1} when $w=p$. 

\begin{Prop} \label{P3.4} For any $1\leqslant w\leqslant p $, it holds 
 \medskip 

$a_{w})$\  $\c L^{(v_0)}+C_{w}(\c L)=
\c L^{[v_0]}+C_{w}(\c L)\, .$
\medskip 

$b_{w})$\  $\c L(w)\subset \c L^{(v_0)}+C_{w}(\c L)$. 
\end{Prop}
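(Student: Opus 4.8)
The plan is to prove both statements simultaneously by induction on $w$, since $a_w)$ and $b_w)$ are intertwined: the inclusion $\c L(w)\subset\c L^{(v_0)}+C_w(\c L)$ feeds into the comparison of the two ideals modulo $C_w(\c L)$, and conversely knowing the ideals agree modulo $C_w$ lets one push the filtration statement one step further. The base case $w=1$ is essentially vacuous: $C_1(\c L)=\c L$, so both $a_1)$ and $b_1)$ hold trivially. For the inductive step, I would assume $a_{w})$ and $b_{w})$ and derive $a_{w+1})$ and $b_{w+1})$, working with the graded pieces $C_w(\c L)/C_{w+1}(\c L)$.

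The key input is the congruence established at the end of Section \ref{S3.4}: modulo $\c L^{[v_0]}_{O'_M(\c K')}$ we have $\Omega(\sigma^{M-1}e_1^{(q)})\equiv\sigma(x)\circ\sigma^{M-1}e_1^{(q)}\circ(-x)$ with $x\in\c N^{\dag}[0]$ defined over the trivial extension $\c K'/\c K'$, so that by the criterion of Proposition \ref{P3.2} (applied to the quotient $L=\c L/\c L^{[v_0]}$ via Proposition \ref{P3.3}) one already gets $\c L^{(v_0)}\subset\c L^{[v_0]}$. For the reverse comparison modulo $C_{w+1}(\c L)$, I would feed the cocycle equation \eqref{E3.1} into the criterion with the ideal $\c I=\c L^{(v_0)}+C_{w+1}(\c L)$ and use the induction hypothesis $b_w)$ to control $X$ modulo $C_w(\c L)$: writing $X\equiv x\bmod\c L^{[v_0]}_{sep}$ and analyzing the degree-$w$ part, the discrepancy between $X$ and the ``trivial'' solution $x$ lives in $C_w(\c L)/C_{w+1}(\c L)$ and is governed by the linear part $\wt c^{(1)}$ of the cocycle, i.e. by $\bar{\c V}B^{\dag}$ on $\bar{\c L}^{\dag}[1]$ — which is exactly what generates $\bar{\c L}^{[v_0]}$ by Proposition \ref{P2.11}. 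The point is that any generator $\bar{\c V}(D^{\dag}_{\iota 0})$ with $\iota\in\mathfrak A_1^+(p)$ that is \emph{not} already in $\c L^{(v_0)}+C_{w+1}(\c L)$ would force $X\bmod(\c L^{(v_0)}+C_{w+1}(\c L))$ to have field of definition with $v(\c K'(\,\cdot\,)/\c K')\geqslant qv_0-b^*$, because the corresponding $t_1$-exponent $\iota p^{-v_p(\iota)}\geqslant qv_0-b^*$ by Proposition \ref{P2.4}b); this contradicts minimality in Proposition \ref{P3.2}. That gives $a_{w+1})$.

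For $b_{w+1})$ I would use $a_{w+1})$ together with the explicit structure of $\c L(w+1)$: by Corollary \ref{C1.8} and Proposition \ref{P1.6} it suffices to handle generators $p^m[D_{\bar a\bar n}]$ with $|\bar a|\geqslant v_0 U(w,m)$, and modulo $C_{w+1}(\c L)$ these reduce to the rank-one case $p^mD_{an}$ with $a/v_0\geqslant U(w,m)$, handled by Proposition \ref{P1.2}/\ref{P1.3}. The relation $\iota=qp^{M-1}a-b^*U$ from the construction in Section \ref{S2} matches these $p^mD_{an}$ to generators $\bar{\c V}(D^{\dag}_{\iota 0})$ of $\bar{\c L}^{[v_0]}$ with $\op{ch}(\iota)\geqslant 1$, and since by $a_{w+1})$ those lie in $\c L^{(v_0)}+C_{w+1}(\c L)$, so do the $p^mD_{an}$, whence $\c L(w+1)\subset\c L^{(v_0)}+C_{w+1}(\c L)$. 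Finally, the case $w=p$ combined with $\c L(p)\subset\c L^{(v_0)}$ (from $b_p)$, using $U(p-1,0)=p-1$ so that $D_{an}\in\c L(p)$ once $a\geqslant(p-1)v_0$) yields $\c L^{(v_0)}=\c L^{[v_0]}$, i.e. Theorem \ref{T3.1}.

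The main obstacle I anticipate is the degree-$w$ bookkeeping in the inductive step for $a_{w+1})$: one must show that the difference $X - x$ in degree $w$ is \emph{exactly} accounted for by the coefficients of the linear cocycle $\wt c^{(1)}$ (equivalently, by $B^{\dag}$ restricted to the generators of $\Ker\bar{\c V}$ in charge $1$), with no spurious contributions from higher-charge terms or from the nonlinear part of $\wt c_U$. This is where Proposition \ref{P2.9}b)–c) (the cocycle is determined by its linear part, the action by its differential) and the compatibility of $B^{\dag}$ with the double grading $\wt{\c N}^{\dag}[j,m]$ (Corollary \ref{C2.10}) must be invoked carefully, and where the separation estimate $\rho(A[u^*],B_1[u^*])\geqslant\rho(u^*)$ from Section \ref{S2.1} — guaranteeing distinct $t_1$-exponents never collide modulo $(\sigma-\id)\c K'$ (Proposition \ref{P2.4}a)) — is what makes the term-by-term identification of the field of definition legitimate.
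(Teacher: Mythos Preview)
Your overall inductive scheme and your argument for $a_{w+1})$ are essentially what the paper does. The gap is in your route to $b_{w+1})$.

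You propose to deduce $b_{w+1})$ from $a_{w+1})$ by ``matching'' each $p^mD_{an}$ with $a/v_0\geqslant U(w,m)$ to a generator $\bar{\c V}(D^{\dag}_{\iota 0})$ of $\bar{\c L}^{[v_0]}$. This does not work as stated: for $\iota\in\mathfrak A_1^+(p)$ one has $\op{ch}(\iota)=1$, so $D^{\dag}_{\iota 0}\in\Ker\bar{\c V}$ and $\bar{\c V}(D^{\dag}_{\iota 0})=0$. The actual generators of $\bar{\c L}^{[v_0]}$ are the $\bar{\c V}B^{\dag}(D^{\dag}_{\iota 0})$, and these are \emph{not} a priori of the form $p^mD_{an}$ modulo $C_{w+1}(\bar{\c L})$: establishing that relationship is precisely the content of the explicit computations in Section~\ref{S4} (Proposition~\ref{P4.7} and Theorem~\ref{T4.1}), which come \emph{after} Proposition~\ref{P3.4}. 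So your argument for $b_{w+1})$ is either circular or assumes unproved structure of $B^{\dag}$.

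The paper avoids this by reversing your order. It proves $b_{w+1})$ \emph{first}, directly from the ramification criterion \eqref{E3.4}, without any reference to $\c L^{[v_0]}$: one writes $\sigma X-X\equiv y$ modulo $([\c L^{(v_0)},\c L]+C_{w+1}(\c L))_{sep}$ (Lemma~\ref{L3.5}, which uses $a_w)$), splits $y$ into a linear part $L(y)=\Omega(\sigma^{M-1}e_1^{(q)})-\sigma^{M-1}e_1^{(q)}$ and non-linear commutator terms. The non-linear terms with exponent $\iota\geqslant p^{M-1}b^*U(w,A)$ are absorbed using the inductive hypothesis $b_w)$ via $\c L_{\c N}(w+1)\subset[\c L^{(v_0)},\c L]+C_{w+1}(\c L)$. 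What survives in $L(y)$ are precisely terms $p^mt_1^{-\iota}D_{a0}$ with $\iota=aqp^{M-1}-b^*p^{M-1}U(w,m)$; when $qa\geqslant b^*U(w,m)$ these contribute upper ramification $\geqslant qv_0-b^*$, so the criterion forces $p^mD_{a0}\in\c L^{(v_0)}+C_{w+1}(\c L)$. Only after this does the paper turn to $a_{w+1})$, via the argument you sketched.
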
 

\subsection{The proof of Proposition\,\ref{P3.4}} \label{S3.5}  

Use induction on $w\geqslant 1$. 
If 
$w=1$ there is nothing to prove. Assume that proposition holds for some 
$1\leqslant w<p$. 

For (non-commuting) variables $U$ and $V$, let 
$$\delta ^0(U,V):=U\circ V-(U+V)\, .$$
Note that if the values of 
$U$ and $V$ are defined modulo $C_{w}(\c L)_{O'_M(\c K')}$ then 
the value of 
$\delta ^0(U,V)$ is well-defined modulo $C_{w+1}(\c L)_{O'_M(\c K')}$.

\begin{Lem} \label{L3.5}  There is a congruence 
$$\sigma X-X\equiv y\,
\op{mod}\,\left ([\c L^{(v_0)},\c L]+
C_{w+1}(\c L)\right )_{sep}\, ,$$
where the element $y\in\c N^{\dag }[0]$ is such that 
$$y=\Omega \left (\sigma ^{M-1}e_1^{(q)}\right )-\sigma ^{M-1}e_1^{(q)}+
\delta ^0\left (\Omega \left (\sigma ^{M-1}e_1^{(q)}\right ),x\right )-
\delta ^0\left (\sigma x,\sigma ^{M-1}e_1^{(q)}\right )\,.$$
\end{Lem}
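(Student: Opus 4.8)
The plan is to compute $\sigma X - X$ directly from the defining relation \eqref{E3.1} for $X$, expanding the Campbell--Hausdorff operation $\circ$ into its linear part plus the correction term $\delta^0$, and then to exploit the facts already established that $X \equiv x \bmod \c L^{[v_0]}_{sep}$ and that $\c L^{(v_0)} \subset \c L^{[v_0]}$, together with congruence \eqref{E3.3}. First I would rewrite \eqref{E3.1} as
\[
\Omega\left(\sigma^{M-1}e_1^{(q)}\right) = \sigma X \circ \sigma^{M-1}e_1^{(q)} \circ (-X),
\]
and solve formally for $\sigma X$ modulo $C_{w+1}(\c L)_{sep}$ using the inductive hypothesis $a_w$), $b_w$) of Proposition \ref{P3.4}, which controls $X$ modulo $C_w(\c L)$. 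Applying the identity $U \circ V = U + V + \delta^0(U,V)$ twice to the right-hand side, one gets
\[
\sigma X \equiv \Omega\left(\sigma^{M-1}e_1^{(q)}\right) + X - \sigma^{M-1}e_1^{(q)} + \delta^0\left(\Omega(\sigma^{M-1}e_1^{(q)}), x\right) - \delta^0\left(\sigma x, \sigma^{M-1}e_1^{(q)}\right)
\]
modulo the indicated ideal, where I have replaced $X$ by $x$ inside the $\delta^0$-terms because those terms are bilinear-and-higher, hence the error from $X - x \in \c L^{[v_0]}_{sep}$ lands in $[\c L^{[v_0]}, \c L]_{sep} = [\c L^{(v_0)},\c L]_{sep} + (\dots)$ after invoking $a_w$) again, or directly in $C_{w+1}$. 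Rearranging gives exactly $\sigma X - X \equiv y \bmod \left([\c L^{(v_0)},\c L] + C_{w+1}(\c L)\right)_{sep}$ with $y$ the stated element of $\c N^{\dag}[0]$.

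The key bookkeeping steps, in order, are: (i) verify that $y$ as defined genuinely lies in $\c N^{\dag}[0]$ — this uses that $x \in \c N^{\dag}[0]$, that $\sigma^{M-1}e_1^{(q)}$ and $\Omega(\sigma^{M-1}e_1^{(q)})$ lie in (or are identified with elements of) $\c N^{\dag}[0]$ via the embedding $\iota\colon\sigma^{M-1}O_M(\c K)\hookrightarrow O'_M(\c K')$ discussed before Prop.\,\ref{P3.2}, and that $\c N^{\dag}[0]$ is a Lie subalgebra so $\delta^0$ of its elements stays inside; (ii) confirm that the two error sources — the truncation modulo $C_w$ in the inductive hypothesis and the replacement $X \rightsquigarrow x$ — both land in $([\c L^{(v_0)},\c L] + C_{w+1}(\c L))_{sep}$, where for the second one I use $X - x \in \c L^{[v_0]}_{sep}$ combined with $a_w$) to trade $\c L^{[v_0]}$ for $\c L^{(v_0)} + C_w(\c L)$, the $C_w$ part then contributing to $C_{w+1}$ after bracketing; (iii) assemble the congruence.

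The main obstacle I anticipate is step (ii): making precise why $X - x \in \c L^{[v_0]}_{sep}$ suffices even though $\c L^{[v_0]}$ is in general strictly larger than $\c L^{(v_0)}$. The point is that $X - x$ enters $y$ only through the bilinear $\delta^0$-brackets against elements that are themselves in $C_1 = \c L$, so the contribution lies in $[\c L^{[v_0]}, \c L]_{sep}$; then inductive hypothesis $a_w$) lets me write $\c L^{[v_0]} \equiv \c L^{(v_0)} \bmod C_w(\c L)$, so $[\c L^{[v_0]},\c L] \subset [\c L^{(v_0)},\c L] + [C_w(\c L),\c L] \subset [\c L^{(v_0)},\c L] + C_{w+1}(\c L)$, which is exactly the error ideal in the statement. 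A secondary subtlety is keeping track of the $\sigma^{M-1}$-shifts and the action $\Omega$ through all the $\delta^0$-expansions, so that the final $y$ is visibly the one written in the lemma and not some $\sigma$-translate of it; this is handled by the compatibility $\sigma^{M-1}e(t) = \Omega(\sigma^{M-1}e(t_1^q))$ recorded just before Prop.\,\ref{P3.2}.
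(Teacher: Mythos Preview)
Your proposal is correct and follows essentially the same approach as the paper. The paper's proof is very brief: it rewrites \eqref{E3.1} as the two-fold equality $\Omega\left(\sigma^{M-1}e_1^{(q)}\right)\circ X=\sigma X\circ\sigma^{M-1}e_1^{(q)}$, expands each side once via $U\circ V=U+V+\delta^0(U,V)$, and then records the two congruences $\delta^0(\Omega(\sigma^{M-1}e_1^{(q)}),X)\equiv\delta^0(\Omega(\sigma^{M-1}e_1^{(q)}),x)$ and $\delta^0(\sigma X,\sigma^{M-1}e_1^{(q)})\equiv\delta^0(\sigma x,\sigma^{M-1}e_1^{(q)})$ modulo $([\c L^{(v_0)},\c L]+C_{w+1}(\c L))_{sep}$, without further justification; your step (ii), using $X-x\in\c L^{[v_0]}_{sep}$ together with the inductive hypothesis $a_w)$ to pass from $[\c L^{[v_0]},\c L]$ to $[\c L^{(v_0)},\c L]+C_{w+1}(\c L)$, is exactly the argument underlying those congruences and is spelled out more fully than in the paper.
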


\begin{proof}[Proof of lemma] 

Use that  
$\Omega \left (\sigma ^{M-1}e_1^{(q)}\right )\circ X=
\sigma X\circ \sigma ^{M-1}e_1^{(q)}$ and 
there are the following congruences modulo 
$\left ([\c L^{(v_0)},\c L]+
C_{w+1}(\c L)\right )_{sep}$: 
$$\delta ^0\left (\Omega \left (\sigma ^{M-1}e_1^{(q)}\right ),
X\right )\equiv 
\delta ^0\left (\Omega \left (\sigma ^{M-1}e_1^{(q)}\right ), 
x\right )\, ,$$ 
$$\delta ^0(\sigma X,\sigma ^{M-1}e_1^{(q)})\equiv 
\delta ^0\left (\sigma x,\sigma ^{M-1}e_1^{(q)}\right )\, .$$ 
\end{proof}

In order to prove part $b_{w+1})$ it will be suficient 
to verify that
\medskip 

$(\bullet _1) $ {\it if $p^mD_{a0}\in \c L(w+1)_{W(k)}$ then  
$p^mD_{a0}\in (\c L^{(v_0)}+C_{w+1}(\c L))_{W(k)}$.} 
\medskip 

This will be done by the use of the characterization  
of $\c L^{(v_0)}$ via 

\begin{equation} \label{E3.4} 
v(\c K'(X\,\op{mod}\,\c L^{(v_0)}_{sep})/\c K')
< qv_0-b^*\,,
\end{equation}

Suppose $y=\sum _{\iota}y_{\iota }+y_+$, where $y_+\in\c L_{\m '}$, 
$\iota $ runs over $\mathfrak{A}^+(p)$ and the corresponding  
$y_{\iota }$ appear in the form 
$y_{\iota }=\sum _{r\geqslant 0}t_1^{-\iota p^r}l_{\iota r}$ 
with all $l_{\iota r}\in\c L_{W(k)}$. 

Then $X=\sum _{\iota }X_{\iota }+X_+$ where 
$\sigma X_{\iota }-X_{\iota }=y_{\iota }$ 
and $\sigma X_+-X_+=y_+$. Here 
$X_+\in\c L_{\m '}$ and does not contribute to \eqref{E3.4}. 
The elements $X_{\iota }$ contribute to \eqref{E3.4} as follows:  
\medskip 

-- {\it if $\iota \in\mathfrak{A}^+(p)$ and 
$\iota p^{-v_p(\iota )}\geqslant qv_0-b^*$ then} 
$$\sum _{r}\sigma ^{-r}l_{\iota r}
\in (\c L^{(v_0)}+C_{w+1}(\c L))_{W(k)}\, .$$ 

We can follow the contribution to the upper ramification 
in \eqref{E3.4} 
for each $\iota \in\mathfrak{A}^+(p)$ separately because 
by Proposiion\,\ref{P2.4} 
these contributions are independent. 

Let $\iota _0=qp^{M-1}a-p^{M-1}b^*U(1,m)=
p^{M-1-m}(qp^ma-b^*)\in\frak{A}^0$. 

Then $\iota _0>0$. Indeed, 
$p^mD_{a0}\in \c L(w+1)_{W(k)}$ implies that 
$$qp^{M-1}a\geqslant qp^{M-1}v_0U(w,m)>
p^{M-1}b^*U(w,m)\geqslant p^{M-1}b^*U(1,m)\,.$$ 
Therefore, $\iota _0\in\frak{A}_1^+(p)$  
(use that $a$ is prime to $p$) and $p^ma\geqslant v_0$. 
This means that the $\iota _0$-part $X_{\iota _0}$ 
should contribute to 
condition \eqref{E3.4}.

In order to find the contribution of $X_{\iota _0}$ 
notice that 
the element $y$ has a linear part 
\begin{equation} \label{E3.5} L(y):=
\Omega \left (\sigma ^{M-1}e_1^{(q)}\right )-\sigma ^{M-1}e_1^{(q)}
=\sum _{a,s}t_1^{-qp^{M-1}a}aS_{b^*}^sD_{a0}
\end{equation}
and a non-linear part, which is a $W(k)$-linear combination 
of various 
\begin{equation} \label{E3.6} Q(\iota ,\bar a_1,\bar a_2)=
p^{A}t_1^{-\iota }\left [[D_{\bar a_1\bar n_1}],
[D_{\bar a_2\bar n_2}]\right ]\,,
\end{equation}
where 
$\iota \in \mathfrak{A}^0$, $\op{ch}(\iota )=s$ and 
$-\iota \geqslant -qp^{M-1}(|\bar a_1|+|\bar a_2|)+p^{M-1}b^*U(s,A)$ 
with some $s\geqslant 1$. 
Any such term $Q(\iota , \bar a_1, \bar a_2)$ 
contributes to the $\iota _0$-part 
iff 
$\iota =p^r\iota _0$ and $s=\op{ch}(\iota _0)=1$. 

\begin{Lem} \label{L3.6} a) 
Any $Q(\iota _0p^u, \bar a_1, \bar a_2)
\in\c L(w+1)_{O'_M(\c K')}$.
\medskip 

b) If $Q \in\c L_{\c N}(w+1)_{O'_M(\c K')}$ then 
it belongs to $([\c L^{(v_0)},\c L]+C_{w+1}(\c L))_{O'_M(\c K')}$.
\end{Lem}

\begin{proof} a) 
The linear $\iota _0$-term is 
$$p^mD_{a0}t_1^{-qp^{M-1}a+p^{M-1}b^*/p^m}=
p^mD_{a0}t_1^{-p^{M-m-1}(qp^ma-b^*)}\,.$$
It contributes with the upper ramification number $qp^ma-b^*$ 
(use the inequality  
$ap^m\geqslant v_0$). 
Consider 
$$ Q(\iota _0p^u, \bar a_1, \bar a_2)=
p^At_1^{-qp^{M-1}b+p^{M-1-A}b^*}
[[D_{\bar a_1\bar n_1}],[D_{\bar a_2\bar n_2}]]$$
which contributes to 
the $\iota _0$-part. 
Here $-b=-p^{m-A}a\geqslant -qp^{M-1}(|\bar a_1|+|\bar a_2|)$. Therefore, 
$|\bar a_1|+|\bar a_2|\geqslant p^{m-A}a\geqslant p^{m-A}U(w,m)v_0$. 

Recall that $U(w,m)=w_0/p^u+w_1/p^{u+1}$, where $u\geqslant 0$, $w=w_0+w_1$, 
$m=wu+w_1$, $0\leqslant w_1<w$. 

In the case  $m-A\leqslant u$ we have $p^{m-A}U(w,m)=U(w,A)$ and, 
therefore,  $Q(\iota _0, \bar a_1, \bar a_2)\in\c L(w+1)_{O_M'(\c K')}$. 

If $m-A=u+u_1$ with $u_1\geqslant 1$ then 
$$|\bar a_1|+|\bar a_2|\geqslant 
p^{u_1}(w_0+w_1/p)v_0>wv_0$$ 
and already 
$[[D_{\bar a_1\bar n_1}],[D_{\bar a_2\bar n_2}]]\in \c L(w+1)_{W(k)}$.
\medskip

b) We know that 
$|\bar a_1|+|\bar a_2|\geqslant (b^*/q)U(w,A)\, $, where 
$b^*/q$ could be replaced by $v_0$, cf.\,Section\,\ref{S2.1}, and, 
therefore,  
$p^A\left [[D_{\bar a_1\bar n_1}], [D_{\bar a_2\bar n_2}]\right ]$
belongs to the $W(k)$-extension of scalars of 
$$\c L_{\c N}(w+1)\cap C_2
(\c L)\subset \sum _{w_1+w_2= w+1}[\c L_{\c N}(w_1), 
\c L_{\c N}(w_2)]\, .$$ 

By inductive assumption $b_{w})$ the RHS belongs to  
$$\sum _{w_1+w_2=w+1}[\c L^{(v_0)}+C_{w_1}(\c L), 
\c L^{(v_0)}+C_{w_2}(\c L)]
\subset [\c L^{(v_0)}, \c L]+C_{w+1}(\c L)\, .$$
\end{proof} 

As a result of the above Lemma, the $\iota _0$-contribution  
of the non-linear part of $y$ 
belongs to  
$[\c L^{(v_0)},\c L]+C_{w+1}(\c L)$ 
(and could be ignored modulo $\c L^{(v_0)}_{sep}$). 

It remains to note that if  
$y_{\iota }=p^mt_1^{-\iota _0p^u}l_u$ is a term from 
the linear part $L(y)$ then $u=0$. Therefore, 
$p^mD_{a0}$ is the only $\iota _0$-contribution from $L(y)$. 
This proves the property $(\bullet _1)$ 
and part $b_{w+1}$) is completely proved. 
\medskip 

Prove part $a_{w+1}$).

Applying $\bar{\c V}^0$ to \eqref{E3.2} 
we obtain the following congruence in $\bar{\c N}^{\dag }[0]$ 
modulo the extension of scalars of the ideal 
$[\bar{\c L}^{\dag [v_0]},\bar{\c L}^{\dag }]+
C_{w+1}(\bar{\c L}^{\dag })$: 
$$\Omega \left (\sigma ^{M-1}\bar{e}_1^{(q)}\right )\equiv 
\sigma (\bar{x})
\circ (\sigma ^{M-1}\bar{e}_1^{(q)})
\circ (-\bar{x}) +
\sum _{\iota \in\mathfrak{A}^+(p)}t_1^{-\iota }\bar{\c V}(A^{\dag }-\id )
D_{\iota 0}\, .$$

Comparing it with our criterion 
$$\Omega \left (\sigma ^{M-1}\bar{e}_1^{(q)}\right )\equiv 
\sigma (\bar X)
\circ \sigma ^{M-1}\bar{e}_1^{(q)}\circ (-\bar X)
\,\op{mod}\,\left ([\bar{\c L}^{[v_0]}, 
\bar{\c L}]+C_{w+1}(\bar{\c L})\right )_{sep}\, $$
we can assume that $\bar X=\bar x+\bar Y$, where 
$\bar Y\in\bar{\c L}^{[v_0]}_{sep }$ and satisfies the following relation 
$$\sigma(\bar Y)-\bar Y=
\sum _{\iota \in\mathfrak{A}^+(p)}t_1^{-\iota }
\bar{\c V}(A^{\dag }-\id )(D^{\dag }_{\iota 0})\,.$$

Choose $\iota $-components $\bar Y_{\iota }$ such that 
$\sigma \bar Y_{\iota }-\bar Y_{\iota }=t_1^{-\iota }
\bar{\c V}(A^{\dag }-\id )(D^{\,\dag }_{\iota 0})$ 
and $\bar Y=\sum _{\iota }\bar Y_{\iota }$. 
As earlier, all $\iota $ give independent contributions to 
upper ramification.  
\medskip 

Take $\iota\in\mathfrak{A}_1^+(p)$, i.e. $\op{ch}(\iota )=1$,    
then $\sigma (\bar Y_{\iota })-\bar Y_{\iota }=
t_1^{-\iota }B^{\,\dag }(D^{\dag }_{\iota 0})$.

If $\iota =p^m(qp^{M-1}\alpha -p^{M-1-A}b^*)$, 
cf.\,Prop.\,\ref{P2.4}b),  
then it gives the upper ramification number 
$q(p^{A}\alpha )-b^*$. On the other hand, 
$\alpha >p^{-A}b^*/q$ is equivalent to 
$\alpha\geqslant p^{-A}v_0$ or $qp^A\alpha -b^*\geqslant qv_0-b^*$. 
So, $\bar{\c V}B^{\dag }(D_{\iota 0})\in\bar{\c L}^{(v_0)}$. 

Therefore, $\bar{\c L}^{(v_0)}+C_{w+1}(\bar{\c L})
\supset \bar{\c L}^{[v_0]}+C_{w+1}(\bar{\c L}^{[v_0]})$.

Theorem \ref{T3.1} is completely proved. 

\subsection{Interpretation in terms of deformations} \label{S3.6}
The above described construction of the ramification ideals 
$\c L^{(v_0)}$ and $\bar{\c L}^{(v_0)}$ 
admits the following interpretation if we replace 
everywhere $t_1$ by $t$. 

We start, as earlier, with the Lie algebra $\c L$  
and $e=e(t)\in \c L_{O_M(\c K)}$ 
such that (for simplicity) 
$e=\sum _{a\in\Z ^0(p)}t^{-a}D_{a0}$. Choose 
$f\in\c L_{sep}$ such that $\sigma f=e\circ f$ and fix the identification 
$\eta ^0: \c G_{<p,M}\simeq G(\c L)$ via $\tau\mapsto (-f)\circ \tau (f)$, 
where 
$\tau\in\c G_{<p,M}$. As earlier, define the filtration $\c L(w)$ and set 
$\bar{\c L}=\c L/\c L(p)$. Then $\bar{\c L}_{W(k)}$ 
is generated by $D_{an}$, where $a\in 
\Z ^0(p)\cap [0,(p-1)v_0)$. If $\bar e\in\bar{\c L}_{O_M(\c K)}$ and 
$\bar f\in\bar{\c L}_{sep}$ are the images of $e$ 
and, resp., of $f$ under the extensions of scalars of the projection  
$\c L\to \bar{\c L}$ then $\tau\mapsto (-\bar f)\circ \tau (\bar f)$ 
determines the projection $\bar\eta ^0:\c G _{<p,M}\to G(\bar{\c L})$. 
Clearly, $\bar{\eta }^0$ is induced by $\eta ^0$. 

As earlier, choose $r^*$, $N^*$, set $q=p^{N^*}$, $b^*=r^*(q-1)$, 
introduce the system of coefficients 
$\{t^{-\iota }\ |\ \iota\in\mathfrak{A}^0\}$ and consider the Lie algebra $\c L^{\dag }$. 
Then $\c L^{\dag }_{W(k)}$ has generators $D^{\dag }_{\iota n}$, where $\iota\in\frak{A}^0(p)$. 
By Proposition \ref{P2.4} all $\iota p^{-v_p(\iota )}$ are pairwise different and, therefore, 
we can define isomorphic embedding $\psi :\c L^{\dag }\to \c L$ by the correspondences 
$D^{\dag }_{\iota n}\mapsto D_{\iota ^*n^*}$, where $\iota ^*=\iota p^{-v_p(\iota )}$ 
and $n^*=n+v_p(\iota )$.  

Let $\psi (\c L^{\dag })=\c L^{o\dag }$. If 
$\op{ch}(\iota )=0$ then $\iota =aqp^{M-1}$, where 
$a\in[0,(p-1)v_0)$, $\iota ^*=a$ and $\psi (D^{\dag }_{\iota n})
=D_{a,n+M-1}$. If $\op{ch}(\iota )\geqslant 1$ then $\iota p^{-v_p(\iota )}>(p-1)v_0$, cf. Prop.\,\ref{P2.4}. In particular, for all such $\iota $, $D_{\iota n}\in\c L(p)_{W(k)}$. 

As earlier, define the filtration on $\c L^{o\dag }$ 
and set $\bar{\c L}^{o\dag }
=\c L^{o\dag }/\c L^{o\dag }(p)$. 
By Prop.\,\ref{P3.4}, if $\op{ch} (\iota )\geqslant 1$ then all 
$D_{\iota n}\in\bar{\c L}(p)_{W(k)}$, and  
we have the induced projection $\bar{\c V}^o:
\bar{\c L}^{o\dag }\to\bar{\c L}$. 

Take 
$$e^{o\dag }=\sum _{\iota\in\mathfrak{A}^+(p)}t^{-\iota }D_{\iota ^*n^*}\,.$$ 
Then the image of $\bar{e}^{o\dag }$ in 
$\bar{\c L}_{O_M(\c K)}$ coincides with $\sigma ^{M-1+N^*}(\bar e)$. 
Therefore, we can choose $f^{\dag }\in\bar{\c L}^{\dag }_{sep}$ 
such that its image in $\bar{\c L}_{sep}$ coincides with 
$\sigma ^{M-1+N^*}\bar f$. This means that 
we have epimorphism $\c G_{<p,M}\to G(\bar{\c L}^{o\dag })$
which gives $\bar{\eta }^0$ when composed with projection 
$G(\bar{\c V^o}): 
G(\bar{\c L}^{o\dag })\to G(\bar{\c L})$. 

Now Theorem \ref{T3.1} states that a unipotent 
$\Z /p^M$-action determined by 
$t^{qp^{M-1}}\mapsto 
t^{qp^{M-1}}\wt{\exp}(-t^{b^*})^{p^{M-1}}$ induces via 
formalism of nilpotent Artin-Schreier theory 
a deformation $B^{o\dag }:\Ker\,\bar{\c V}^o\to \Ker\,\bar{\c V}^o$. 
This deformation allows us to recover the ``substantial'' part $\bar{\c L}^{(v_0)}$ 
of the ramification ideal $\c L^{(v_0)}$. It appears as 
$\bar{\c V}^o(\Ker\,\bar{\c V}^o)$, i.e. as a deformation of 
the ``trivial'' part $\Ker\,\bar{\c V}^o\subset\c L(p)$ via the 
nilpotent operator $B^{o\dag }$. 

Such interpretation also indicates a chance of developing a theory which 
works for all $v_0$ (rather than just for only one single value $v_0$).

\section{Explicit computations} \label{S4}

Now we are going to specify our approach to obtain explicit description 
of generators of the ramification ideal $\bar{\c L}^{[v_0]}$ and to see explicitly 
how the idea of deformation works in our case. 

\subsection{Agreements} \label{S4.1} 
We need to treat the Campbell-Hausdorff 
group law via exponentials in the appropriate 
enveloping 
algebras $\bar{\c A}$ and $\bar{\c A}^{\dag }$ of $\bar{\c L}$ and, 
resp., $\bar{\c L}^{\dag }$, 
by involving extension of scalars 
to $O'_M(\c K')$. 
As a matter of fact, the definition of the action $\Omega $ requires working 
inside $\wt{\c N}^{\dag }$ but can be formally treated in 
$\bar{\c N}^{\dag }$, cf. Prop.\,\ref{P2.14}.  
So, it makes sense to work mainly with the Lie algebra  
$\bar{\c L}^{\dag }[0]\simeq \bar{\c L}$ and 
its enveloping algebra 
$\bar{\c A}^{\dag }[0]$.

We are going also to use the advantage of 
special coefficients from $\mathfrak{A}^0$. 
So, we need just a part of the quotient $(\bar{\c A}^{\dag }/
\bar{\c J}^{\dag p})_{O'_M(\c K')}$ 
generated by 
the elements of $\bar{\c N}^{\dag }[0]$. 
(Here $\bar{\c J}^{\dag }$ 
is the augmentation 
ideal of $\bar{\c A}^{\dag }$.) Even more, 
the ideal $\bar{\c L}^{[v_0]}$ is completely determined by the elements 
$\bar{\c V}B^{\dag }(D^{\,\dag }_{\iota 0})$ 
with $\op{ch}(\iota )=1$. Therefore, it 
would be reasonable to ignore all 
$\iota $ with $\op{ch}(\iota )\geqslant 2$, i.e. 
to work with the part 
generated by 
$\bar{\c N}^{\dag }_{\leqslant 1}[0]$ consisting of 
the preimages of 
$\wt{\c N}^{\,\dag }[0,0]\oplus\wt{\c N}^{\dag }[0,1]$. 

As a result we are going to perform calculations 
inside the $W_M(k)$-module $\bar{\c A}^{\,\dag }_{\leqslant 1}[0]$ 
generated by all 
$p^At_1^{-\iota }D_{\bar a\bar n}$, where for 
$\bar a=(a_1, \dots ,a_r)$, $\bar n=(n_1, \dots ,n_r)$,  
$D_{\bar a\bar n}=D_{a_1n_1}\ldots D_{a_rn_r}$, 
$\iota\in\mathfrak{A}^0$, 
$\op{ch}(\iota )\in\{0,1\}$ and it holds 
$-\iota \geqslant -qp^{M-1}|\bar a|+p^{M-1-A}b^*$ if $\op{ch}(\iota )=1$ 
(recall that $U(1,A)=p^{-A}$). 
Since we do not need to follow the effects related 
to $\op{ch}(\iota )\geqslant 2$, 
we can agree that 
$t_1^{-\iota }\cdot t_1^{-\iota '}=0$ if 
$\op{ch}(\iota )=\op{ch}(\iota ')=1$. 
We do need to go to $\bar{\c A}^{\dag }_{\leqslant 1}[0]$ 
after applying exponential map to 
$\bar{\c N}^{\dag }_{\leqslant 1}$ but almost 
immediately we come back to $\bar{\c N}^{\dag }[0]$ 
and even to $\bar{\c N}^{\dag }[0,1]$. 
\medskip 


\subsection{Choice of $e\in\c L_{O_M(\c K)}$} \label{S4.2}  

We keep sufficiently general choice of the element 
$e=e(t)\in {\c L}_{O_M(\c K)}$ used for the  
construction of the identification $\eta ^0:\c G_{<p,M}\simeq G(\c L)$. 
We fix its choice by specifying 
$$E=\wt{\exp}(e) =
1+\sum _{r(\bar a)\geqslant 1}\eta (\bar a)t^{-(a_1+
\dots +a_r)}D_{a_10}\ldots D_{a_r0}\,.$$
The structural constants $\eta (\bar a)\in W_M(k)$ reflect the 
fact that $E$ is a 
``diagonal element'' in ${\c A}/ J^p$ (here $J$ is 
the augmentation ideal of $\c A$). 
Explicitly, this can be stated via the following condition: 
\medskip 

$(*_e)$\   {\it if $0\leqslant s_1\leqslant s<p$ then 
$$\eta (a_1,\dots ,a_{s_1})\eta (a_{s_1+1},\dots ,a_s)=
\sum _{\pi\in I_{s_1s}} \eta (a_{\pi (1)},\dots ,a_{\pi (s)}),$$
where $I_{s_1s}$
consists of all permutations $\pi $ of order $s$ such that the sequences 
$\pi ^{-1}(1),\dots ,\pi ^{-1}(s_1)$ and
$\pi ^{-1}(s_1+1),\dots ,\pi ^{-1}(s)$
are increasing 
(i.e. $I_{s_1s}$ is the set of all \lq\lq insertions\rq\rq\
of the ordered set $\{1,\dots ,s_1\}$ into
the ordered set $\{s_1+1,\dots ,s\}$).}  

Note that in papers \cite{Ab1, Ab2, Ab3} we treated on the first place  
the simplest choice of such constants $\eta (\bar a)$ related to the case  
$e=\sum _{a\in\Z ^0(p)}t^{-a}D_{a0}$. In this case 
the constants are quite simple $\eta (\bar a)=1/r!$, 
where $\bar a=(a_1, \dots ,a_r)$. However, our approach works 
equally well in the above general situation. 

\begin{remark} There is a very interesting possibility to take 
$$e(t)=D_{00}\circ (t^{-1}D_{10})
\circ \ldots \circ (t^{-a}D_{a0})\circ \ldots \, .$$
Then $E=\sum _{r(\bar a)\geqslant 0}\eta (\bar a)t^{-|\bar a|}
D_{\bar a\bar 0}$, where 
$\eta (\bar a)=0$ if 
the condition $0\leqslant a_1\leqslant a_2\leqslant \ldots \leqslant a_r$ 
for $D_{\bar a\bar 0}$ is not satisfied. Note that under this condition 
$\eta (\bar a)=1/(r_1!r_2!\ldots r_s!)$ where 
$\bar a=(\bar a^{(1)}, \ldots ,\bar a^{(s)})$ and 
each of the involved vectors 
$\bar a^{(j)}$, $1\leqslant j\leqslant s$, 
has length $r_j$ and equal components. 
This could be very helpful for studying the ramification breaks 
of the filtration $\{\c L^{(v)}\}_{v\geqslant 0}$. 
\end{remark}

\subsection{Statement of the main result}\label{S4.3} 

\begin{definition} 
 Let $\bar n=(n_1,\dots ,n_s)$ with $s\geqslant 1$. Suppose there is a partition  
 $0=i_0<i_1<\dots <i_r=s$ such that 
 if $i_j<u\leqslant i_{j+1}$ then 
 $n_u=m_{j+1}$ and $m_1>m_2>\dots >m_r$. Then  set  
 $$\eta (\bar a,\bar n)=
 \sigma ^{m_1}\eta (\bar a^{(1)})\dots \sigma ^{m_r}\eta (\bar a^{(r)})\, ,$$
 where $\bar a^{(j)}=(a_{i_{j-1}+1}, \dots ,a_{i_j})$. 
 If such a partition does not exist we set $\eta (\bar a,\bar n)=0$. 
\end{definition}

If $s=0$ we set $\eta (\bar a, \bar n)=1$. 
As earlier,  
set $D_{\bar a\bar n}=D_{a_1n_1}\dots D_{a_sn_s}$.

Note, if  $e_{(N^*,\,0]}:=\sigma ^{N^*-1}(e)\circ 
\sigma ^{N^*-2}(e)\circ \dots \sigma (e)\circ e$ then 
$$\wt{\exp}\,e_{(N^*,\,0]}\equiv \sum _{\bar a, \bar n}
\eta (\bar a,\bar n)t^{-|\bar a|}
D_{\bar a\bar n}\,.$$

For $\gamma\geqslant 0$ and $N\in\Z _{\geqslant 0} $, introduce 
$\c F^0_{\gamma ,-N}\in{\c L}_{W_M(k)}$ such that 
$$\c F^0_{\gamma ,-N}=\sum _{\substack{
\bar a, \bar n\\
\gamma (\bar a,\bar n)=\gamma }}a_1p^{n_1}\eta (\bar a,\bar n)[\dots [D_{a_1 n_1},
D_{a_2 n_2}],\dots ,D_{a_s n_s}]\, .$$

Here: 

--- \ $\bar a=(a_1,\dots ,a_s)$, $\bar n\in\Z ^s$, $n_1\geqslant 0$ and 
all $n_i\geqslant -N$;

--- \ $\gamma =\gamma (\bar a,\bar n)=
a_1p^{n_1}+a_2p^{n_2}+\dots +a_sp^{n_s}$\,.
\medskip 

\begin{remark}
By definition, everywhere 
$D_{a_in_i}:=D_{a_i,n_i\op{mod}N_0}$, but when computing 
$\eta (\bar a, \bar n)$ or 
$\gamma (\bar a,\bar n)$ it is essential that $n_i\in\Z $. 
\end{remark}

Note that the non-zero terms in the above expression for 
$\c F_{\gamma , -N}^0$ can appear only if 
$n_1\geqslant n_2\geqslant\ldots \geqslant n_s$ and $\gamma\in A[p-1,N]$, 
cf. Sect.\,\ref{S2.1}.
\medskip 

The result about explicit generators of $\bar{\c L}^{[v_0]}$ 
can be stated as follows.

Let $\bar{\c F}^0_{\gamma ,-N}$ be the image of $\c F^0_{\gamma ,-N}$ in 
$\bar{\c L}_{W(k)}$.

Let $\iota =p^m(qp^{M-1}\alpha -(q-1)\beta )\in\mathfrak{A}_1^+(p)$ 
be the standard presentation 
from Section\,\ref{S2.2}. Then $|\iota |\leqslant p^{M-1}(p-1)b^*$, 
$p^m\alpha $ and $p^m\beta /r^*$ are integers not both divisible by $p$. 
We have also that  $p^m(q-1)\beta /b^*=p^{m+M-1-A}$ where 
$0\leqslant A\leqslant M-1$, compare to Prop.\,\ref{P2.4}b). 
So, by setting 
$\gamma (\iota ):=p^A\alpha $ and $M(\iota ):=m-A$ we obtain 
the following presentation of $\iota $: 
\begin{equation} \label{E4.1} 
\iota =p^{M-1+M(\iota )}(q\gamma (\iota )-b^*)\,.
\end{equation}

\begin{remark} 
The condition $\iota\leqslant p^{M-1}(p-1)b^*$ 
holds automatically when we work with elements of 
$\bar{\c N}^{\dag }[0]$. Indeed, 
if $t^{-\iota }[D_{\bar a\bar n}]\in\bar{\c N}^{\dag }[0]$ then 
$-\iota \geqslant -qp^{M-1}|\bar a|$. So, 
if $\iota >p^{M-1}(p-1)b^*$ then  
$|\bar a|> (p-1)b^*/q$. By condition {\bf C3} from Section\,\ref{S2.1} 
it is equivalent to $|\bar a|\geqslant (p-1)v_0=v_0U(p-1.0)$ and then  
$[D_{\bar a\bar n}]\in\bar{\c L}(p)_{W(k)}=0$. 
\end{remark}

For any $\iota\in\mathfrak{A}^+_1(p)$, fix 
a choice of $m_{\iota }\geqslant r(\iota )$, where 
$r(\iota )$ is the maximal integer such that 
$\iota p^{r(\iota )}\leqslant p^{M-1}(p-1)b^*$. 

\begin{Thm} \label{T4.1} 
 $\bar{\c L}^{[v_0]}$ is the minimal ideal in $\bar{\c L}$ such that 
for all $\iota\in\mathfrak{A}^+_1(p)$ given with above notation \eqref{E4.1},   
$\bar{\c F}^0_{\gamma (\iota ), -(M(\iota )+m_{\iota })}
\in\bar{\c L}^{[v_0]}_{W(k)}$. 
\end{Thm}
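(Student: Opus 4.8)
The plan is to descend from the abstract statement of Theorem~\ref{T3.1} — which already identifies $\c L^{[v_0]}$ with $\eta^0(\c G^{(v_0)})$ and, via the construction of Section~\ref{S2.7}, with the ideal generated by $\bar{\c V}B^{\dag}(\Ker\bar{\c V})$ — to the concrete generators $\bar{\c F}^0_{\gamma(\iota),-(M(\iota)+m_\iota)}$. By Proposition~\ref{P2.11} it suffices to compute, for each $\iota\in\mathfrak{A}_1^+(p)$, the element $\bar{\c V}B^{\dag}(D^{\dag}_{\iota 0})$ explicitly in terms of the structural constants $\eta(\bar a,\bar n)$, and to check that the resulting collection generates the same ideal as the $\bar{\c F}^0$'s. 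So the first step is to unwind the definition of $B^{\dag}$: from \eqref{E2.2} and Proposition~\ref{P2.9}d) we have $A^{\dag}_U=\wt\exp(UB^{\dag})$ acting on $\bar{\c L}^{\dag}$, and $B^{\dag}$ is recovered from the \emph{linear} part of the cocycle equation. Concretely, I would substitute $\bar e^{\dag}=\sum_{\iota\in\mathfrak{A}^0(p)}t_1^{-\iota}D^{\dag}_{\iota 0}$ into \eqref{E3.2}, pass to $\bar{\c N}^{\dag}[0]$ via $\bar{\c V}^0$, and isolate the coefficient of $t_1^{-\iota}$ (for $\op{ch}(\iota)=1$) in the difference $\Omega(\sigma^{M-1}\bar e^{(q)}_1)-\sigma^{M-1}\bar e^{(q)}_1$, which is exactly the linear part $L(y)$ displayed in \eqref{E3.5}.

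\textbf{The key computation.} The heart of the argument is the identity \eqref{E3.5}: $L(y)=\sum_{a,s}t_1^{-ap^{M-1}q}\,a\,S_{b^*}^s D_{a0}$, coming from $\Omega(l)=\wt\exp((p^m\alpha)S_{b^*})l$ applied to $\sigma^{M-1}e^{(q)}_1$. To get from this to the $\bar{\c F}^0$'s I would expand $\sigma^{N^*-1}(e)\circ\cdots\circ\sigma(e)\circ e =: e_{(N^*,0]}$ in the enveloping algebra, use the formula $\wt\exp e_{(N^*,0]}\equiv\sum_{\bar a,\bar n}\eta(\bar a,\bar n)t^{-|\bar a|}D_{\bar a\bar n}$ from Section~\ref{S4.3}, and track how the substitution $t^{qp^{M-1}}\mapsto t^{qp^{M-1}}E(t^{b^*})^{-p^{M-1}}$ deforms each monomial. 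Differentiating the $\Z/p^M$-action — i.e. extracting the linear-in-$U$ term of $\Omega_U$ — turns the exponent $a_1$ sitting on the first generator into the prefactor $a_1p^{n_1}$ appearing in the definition of $\c F^0_{\gamma,-N}$, and the grouping of equal $n_i$'s into blocks is exactly the combinatorics encoded in $\eta(\bar a,\bar n)$ (the partition $0=i_0<\cdots<i_r=s$ with $m_1>\cdots>m_r$). The bookkeeping device of Section~\ref{S4.1} — working inside $\bar{\c A}^{\dag}_{\leqslant 1}[0]$ with the convention $t_1^{-\iota}t_1^{-\iota'}=0$ for $\iota,\iota'\in\mathfrak{A}^0_1$ — is what makes this tractable: it kills all contributions with $\op{ch}\geqslant 2$, which are irrelevant for generating $\bar{\c L}^{[v_0]}$ by Proposition~\ref{P2.11}. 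After collecting, the coefficient of $t_1^{-\iota}$ with $\iota=p^{M-1+M(\iota)}(q\gamma(\iota)-b^*)$ should be precisely $\bar{\c F}^0_{\gamma(\iota),-(M(\iota)+m_\iota)}$ for a suitable range of $m_\iota\geqslant m(\iota)$, the choice of $m_\iota$ reflecting that powers $t_1^{-\iota p^{m_\iota}}$ still lie in the admissible range $\iota p^{m_\iota}\leqslant p^{M-1}(p-1)b^*$.

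\textbf{Independence and minimality.} Having produced the generators, I would invoke Proposition~\ref{P2.4}: the values $\iota p^{-v_p(\iota)}$ for distinct $\iota\in\mathfrak{A}_1^+(p)$ are pairwise distinct and all $\geqslant qv_0-b^*$, so the contributions of different $\iota$ to the ramification condition \eqref{E3.4} are independent. This is what lets one conclude that \emph{each} $\bar{\c F}^0_{\gamma(\iota),-(M(\iota)+m_\iota)}$ must individually lie in the ideal, and conversely that no smaller ideal works — matching the "minimal ideal" clause. Consistency across the different legitimate choices of $m_\iota$ (all $\geqslant m(\iota)$) follows because $\sigma$-conjugates $\sigma^{-j}\c F^0$ differ by elements already in lower terms of the central filtration, exactly as in the passage $b_{w})\Rightarrow b_{w+1})$ of the proof of Proposition~\ref{P3.4}; alternatively one reduces to the minimal choice $m_\iota=m(\iota)$ directly.

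\textbf{Main obstacle.} The hard part will be the explicit enveloping-algebra bookkeeping in the key computation: correctly matching the combinatorial identity $(*_e)$ for the $\eta(\bar a)$ against the Campbell–Hausdorff expansion of $e_{(N^*,0]}$, and verifying that after applying $d\Omega_U$ and projecting by $\bar{\c V}^0$ the Lie-monomial coefficients reorganize \emph{exactly} into the nested-bracket sums defining $\c F^0_{\gamma,-N}$ — including getting the $\sigma$-twists $\sigma^{m_j}\eta(\bar a^{(j)})$ and the strict ordering $m_1>\cdots>m_r$ right rather than merely up to lower-order terms. This is precisely the computation that replaces the "heavy computations with enveloping algebras" of \cite{Ab2,Ab3}, and while the deformation framework of Sections~\ref{S2}–\ref{S3} organizes it conceptually, the final identification is still an extended, careful calculation rather than a formal consequence.
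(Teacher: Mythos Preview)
Your overall strategy matches the paper's: reduce via Proposition~\ref{P2.11} to computing $V_\iota:=\bar{\c V}^0 B^{\dag}(D^{\dag}_{\iota 0})$ for $\iota\in\mathfrak{A}_1^+(p)$, and identify these with the $\bar{\c F}^0$'s by an enveloping-algebra calculation inside $\bar{\c A}^{\dag}_{\leqslant 1}[0]$. But your ``key computation'' paragraph blurs two mechanisms that the paper isolates, and without them the calculation does not close.

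First, you cannot read $V_\iota$ off from the linear part $L(y)$ in \eqref{E3.5} alone. The working relation (after passing to $\bar{\c N}^{\dag}_{\leqslant 1}[0]$) is
\[
\sum_{\iota}t_1^{-\iota}V_\iota \equiv E_0^{-1}\c E_0 + \bar x_1 - E_0^{-1}\sigma(\bar x_1)E_0 \pmod{[\bar{\c L}^{[v_0]},\bar{\c L}]_{O'_M(\c K')}},
\]
so $V_\iota$ and $\bar x_1$ are \emph{coupled}. Solving this recursively (by induction on nilpotency degree, applying the operators $\c S,\c R$ of Lemma~\ref{L2.7} at each step) produces the series
\[
\sum_{j\geqslant 0} E_0^{-1}E_1^{-1}\cdots E_j^{-1}\,\c E_j\,E_{j-1}\cdots E_0 \;=\; \sum_{\iota,r} t_1^{-\iota p^r}\bar{\c F}[\iota p^r],
\]
and it is $\sigma^{-r(\iota)}\bar{\c F}[\iota p^{r(\iota)}]$ (not the raw coefficient in $L(y)$) that equals $V_\iota$ modulo $[\bar{\c L}^{[v_0]},\bar{\c L}]$. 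Your invocation of $e_{(N^*,0]}$ is the right object, but the reason it appears is exactly this iteration of conjugations by $E_j=\sigma^j E_0$, not a one-shot differentiation.

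Second, the passage from the monomials $D_{\bar a\bar n}$ in $\bar{\c F}[\iota p^r]$ to the nested brackets $[\ldots[D_{a_1n_1},D_{a_2n_2}],\ldots,D_{a_sn_s}]$ in $\bar{\c F}^0_{\gamma,-N}$ is not a consequence of $(*_e)$ by itself; it needs a specific combinatorial identity over the ``connected'' insertion permutations $\Phi_{ss_1}$ (the paper's Lemma~\ref{L4.6}), together with the telescoping $\gamma^*_{[s_1,s]}-\gamma^*_{[s_1+1,s]}=a_{s_1}p^{n_{s_1}^*}$. That is where the prefactor $a_1p^{n_1}$ actually comes from --- not from differentiating $\Omega_U$, which only contributes the overall factor $|\bar a|$ visible in $\c E_0$. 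Your final paragraph correctly flags this as the hard step, but you should name the tool; otherwise the plan reads as if the reorganization into Lie elements is automatic, which it is not.
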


\begin{remark}
 All $\bar{\c F}_{\gamma , -N}$ appear as linear combinations of the 
 Lie monomials $[D_{\bar a\bar n}]$ such that  
 $\gamma =\gamma (\bar a,\bar n)=
 a_1p^{n_1}+\ldots +a_sp^{n_s}$ with $a_i\in\Z ^0(p)$. There 
 is a lot of monomials containing the generators $D_{0n}$ which do not contribute essentially 
 to the condition $\gamma =\gamma (\bar a, \bar n)$. It would be 
 natural to switch from the Lie algebra $\c L$ to its quotient 
 $\c L^o=\c L/(D_0)$ which is related just to the generators  
 $\{D_{an}\ |\ a\in\Z ^+(p)\}$. Then we obtain the description of 
 the corresponding ramification ideal $\c L^{o(v_0)}$ in terms of generators 
 which all contribute (essentially) 
 to the condition $\gamma =\gamma (\bar a,\bar n)$. 
 As a matter of fact, this means that we fixed a lift $\hat\sigma $ 
 of the Frobenius automorphism to $\c G_{<p,M}$ and work with 
 the Galois group of the totally ramified field extension 
 $\c K_{<p,M}^{\langle\hat\sigma \rangle }$. This field appears very naturally in the 
 traditional local class field theory as well as in its nilpotent version \cite{LF1}.
\end{remark}

The proof is given in Sect.\,\ref{S4.4}-\ref{S4.7} below.

\subsection{A recurrent relation} \label{S4.4} 

Consider the part of relation from Section \ref{S3.5} which is 
adjusted to $\bar{\c N}^{\dag }_{\leqslant 1}$ 
\begin{equation} \label{E4.2} \Omega (\sigma ^{M-1}\bar e_1^{(q)})=\sigma (\bar x_1)
\circ \sigma ^{M-1}\bar e_1^{(q)}\circ 
(-\bar x_1)+\qquad\qquad\qquad 
\end{equation} 
$$\qquad\qquad\qquad \sum _{\iota\in\mathfrak{A}_1^+(p)}t_1^{-\iota }
V_{\iota }\,\op{mod}\,[\bar{\c L}^{[v_0]},\bar{\c L}]_{O'_M(\c K')}\,.$$
Here $\bar x_1\in\bar{\c N}^{\dag }[0,1]$ and 
$V_{\iota }=\bar{\c V}^0B^{\dag }(D_{\iota 0}^{\dag })$. 
We know that all $\sigma ^nV_{\iota }$ generate the 
algebra $\bar{\c L}^{[v_0]}_{W(k)}$. 
Clearly, this property is not affected if we replace 
them modulo the ideal $[\bar{\c L}^{[v_0]},\bar{\c L}]_{W(k)}$. 
\medskip 

Let $E_0=\exp (\sigma ^{M-1}\bar e_1^{(q)})$ and 
$\exp (\Omega (\sigma ^{M-1}\bar e_1^{(q)}))=E_0+\c E_0$. 
In particular, $E_0=\sum _{a\in\Z ^0(p)}\sigma ^{M-1}\eta (\bar a)
t_1^{-qp^{M-1}|\bar a|}D_{\bar a,M-1}$ and 
$$\c E_0= 
\sum _{r(\bar a)\geqslant 1}
\sigma ^{M-1}\eta (\bar a)|\bar a|\sum _{A\geqslant 0}p^A
t_1^{-qp^{M-1}|\bar a|+p^{M-1-A}b^*}D_{\bar a,{M-1}}\,.$$

Under the agreement from Section \ref{S4.1} 
we have the following property: 

\begin{Lem} \label {L4.2}
 If $l\in\bar{\c N}^{\dag }_{\leqslant 1}[0]$, 
 $l'\in\bar{\c N}^{\dag }[0,1]$ 
 then 
 $$\exp (l+l')\cdot \exp (-l)-1=l''\in\bar{\c N}^{\dag }[0,1]\, ,$$
 and $l'\equiv l''\,\op{mod}\,[l',\bar{\c L}]_{W(k)}$. 
\end{Lem}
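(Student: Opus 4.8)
The plan is to work in the quotient of the enveloping algebra $\bar{\c A}^{\dag }_{\leqslant 1}[0]$ described in Section \ref{S4.1}, where the key simplifying convention is that $t_1^{-\iota }\cdot t_1^{-\iota '}=0$ whenever $\iota ,\iota '\in\mathfrak{A}^0_1$, i.e.\ products of two coefficients of $\op{ch}$-weight $1$ vanish. In the bigrading notation this says: the part $\bar{\c A}^{\dag }_{\leqslant 1}[0]$ only retains terms of total $\op{ch}$-weight $0$ or $1$, and a product lands in weight $1$ only if exactly one factor has weight $1$. First I would observe that $l\in\bar{\c N}^{\dag }_{\leqslant 1}[0]=\bar{\c N}^{\dag }[0,0]\oplus\bar{\c N}^{\dag }[1,0]$ decomposes as $l=l_0+l_1$ with $l_0\in\bar{\c N}^{\dag }[0,0]$ and $l_1\in\bar{\c N}^{\dag }[1,0]$, while $l'\in\bar{\c N}^{\dag }[1,0]$ has pure $\op{ch}$-weight $1$.

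The main computation is then to expand $\exp(l+l')\cdot\exp(-l)-1$ using the truncated exponential and track $\op{ch}$-weights. Writing $\exp(l+l')=\sum_{i<p}(l+l')^i/i!$ and $\exp(-l)=\sum_{j<p}(-l)^j/j!$, every monomial in the product is a word in the letters $l_0,l_1,l',-l_0,-l_1$. Such a word lies in $\op{ch}$-weight $0$ if it uses only $l_0,-l_0$, and in $\op{ch}$-weight $1$ if it uses exactly one occurrence of a weight-$1$ letter ($l_1$, $l'$, or $-l_1$); anything using two or more weight-$1$ letters is killed by the convention. So modulo the convention, $\exp(l+l')\exp(-l)$ equals its $\op{ch}$-weight-$\leqslant 1$ part. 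The weight-$0$ part is exactly $\exp(l_0)\exp(-l_0)=1$ (this is the identity in the ordinary truncated-exponential calculus, valid since we are in nilpotence class $<p$). Hence $\exp(l+l')\exp(-l)-1$ is concentrated in $\op{ch}$-weight $1$. It remains to check this weight-$1$ part actually lies in the Lie-algebra piece $\bar{\c N}^{\dag }[1,0]$ rather than just in the associative algebra: this follows because $\exp(l+l')\exp(-l)$ is a ``diagonal'' (grouplike) element — it is $\wt{\exp}$ applied to the Campbell–Hausdorff product $(l+l')\circ(-l)\in\bar{\c L}^{\dag }_{O'_M(\c K')}$ — so its logarithm lies in the Lie algebra, and by the weight analysis that logarithm lies in weight $1$, i.e.\ in $\bar{\c N}^{\dag }[1,0]$; since the weight-$0$ part of the exponential is $1$, the element minus $1$ coincides with (the exponential of) that logarithm up to higher associative terms which are again weight $\geqslant 1$ and, after the convention, reduce to the Lie element itself.

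The step I expect to be the main obstacle is the bookkeeping in the last sentence: being careful that passing from ``$\log$ of the product lies in $\bar{\c N}^{\dag }[1,0]$'' to ``the product minus $1$ lies in $\bar{\c N}^{\dag }[1,0]$'' is legitimate. The point is that if $n\in\bar{\c N}^{\dag }[1,0]$ then $\wt{\exp}(n)-1=n+n^2/2+\cdots$, and every term $n^k$ with $k\geqslant 2$ has $\op{ch}$-weight $\geqslant 2$ and is therefore zero under the working convention of Section \ref{S4.1}; so $\wt{\exp}(n)-1=n$ exactly. Combined with the fact (Prop.\,\ref{P2.12} and the remark preceding it) that we are entitled to treat $\Omega $ and the associated exponentials inside $\bar{\c N}^{\dag }_{\leqslant 1}$, this closes the argument. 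No genuinely hard estimate is needed — the content is entirely the $\op{ch}$-weight truncation plus grouplikeness.
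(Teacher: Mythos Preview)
Your argument is correct and is exactly the verification the paper has in mind: the paper gives no proof and simply calls this an ``easy lemma'', so there is nothing to compare against. The two ingredients you isolate --- the $\op{ch}$-weight bookkeeping under the Section~\ref{S4.1} convention (killing products of two weight-$1$ factors, so the weight-$0$ part reduces to $\exp(l_0)\exp(-l_0)=1$) and the grouplikeness of $\exp(l+l')\exp(-l)=\wt{\exp}((l+l')\circ(-l))$ forcing the answer into the Lie algebra --- are precisely what is needed. One small point you could make more explicit: the reason the Campbell--Hausdorff element $n=(l+l')\circ(-l)$ has vanishing weight-$0$ part is that the weight-$0$ component of $l+l'$ equals that of $l$ (namely $l_0$), so the weight-$0$ part of $n$ is $l_0\circ(-l_0)=0$; after that your observation $\wt{\exp}(n)-1=n$ under the convention finishes the job cleanly.
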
 
\begin{proof} 
This follows from a well-known formula for 
$\exp (-X)\exp (X+\delta )$ modulo the terms where $\delta $ 
appears at least twice, cf.\,e.g.\,\cite{BF}, Theorem 4.22. 
\end{proof} 

Applying exponential to \eqref{E4.2} and using  
$\sum _{\iota }t_1^{-\iota }V_{\iota }\in\bar{\c N}^{\dag }[0,1]$ we obtain 
$$E_0+\c E_0\equiv (1+\sigma\bar x_1)E_0(1-\bar x_1)
\left (1+\sum _{\iota }t_1^{-\iota }V_{\iota }\right )\,\op{mod}\,
[\bar{\c L}^{[v_0]},\bar{\c L}]_{O'_M(\c K')}\,.$$
Then using that $\bar x_1,\sigma\bar x_1\in\bar{\c N}^{\dag}[0,1]$ we rewrite 
this relation as follows:  

\begin{equation} \label{E4.3} 
\sum _{\iota\in\mathfrak{A}_1^+(p)}t_1^{-\iota} V_{\iota }
\equiv  E_0^{-1}\c E_0+\bar x_1-E_0^{-1}
\sigma (\bar x_1)E_{0}\,\op{mod}\,[\bar{\c L}^{[v_0]},
\bar{\c L}]_{O'_M(\c K')}\,.
\end{equation}
Note also that by above Lemma \ref{L4.2} 
this relation is still in $\bar{\c L}_{O'_M(\c K')}$.  



\subsection{Solving relation \eqref{E4.3}}\label{S4.5}

 Let $\iota\in\mathfrak{A}^0_1$. Set (as earlier in the context of 
 $\iota\in\frak{A}_1^+(p)$) 

--- \ $r(\iota ):=\max\{r\ |\ \iota p^r \in\mathfrak{A}_1^0\}
=\max\{r\ |\ |\iota p^r|\leqslant p^{M-1}(p-1)b^*\}$. 
\medskip 

--- $m_{\iota }$ to be some fixed integer $\geqslant r(\iota )$. 
\medskip 

--- \ $\frak{A}_1^{\op{prim}}=\frak{A}^0_1\setminus p\frak{A}^0_1$. 
Note that $\frak{A}^+_1(p)=\{\iota\in
\frak{A}^{\op{prim}}_1\ |\ \iota >0\}$. 
\medskip 

For any $j\geqslant 0$, let $E_j=
\sigma ^{j}E_0$ and $\c E_j=\sigma ^j\c E_0$.

Consider the following formal sum  in $\bar{\c A}_{O'_M(\c K')}$
$$\sum _{j\geqslant 0}E_0^{-1}E_1^{-1}
\ldots E_j^{-1}\c E_jE_{j-1}\ldots E_0
=\sum _{\iota ,r}t_1^{-\iota p^r}\bar{\c F}[\iota p^r]\,$$
where the last sum is taken over 
$\iota\in\mathfrak{A}^{\op{prim}}_1$, 
$r\geqslant 0$ and all $\bar{\c F}[\iota p^r]\in\bar{\c A}_{W(k)}$.  

\begin{remark} The elements $\bar{\c F}[\iota p^r]$ play a crucial role in the 
proof of Theorem \ref{T4.1}. Namely, Proposition \ref{P4.4}b) below establishes their 
relation with elements $V_{\iota }$ (which generate the ideal $\bar{\c L}^{[v_0]}$ )
and proposition \ref{P4.7} below expresses $\bar{\c F}[\iota p^r]$ in the form of Lie elements 
$\bar{\c F}^0_{\gamma (\iota ), -M(\iota )+m_{\iota }}$. We are going to use now that 
$\bar{\c F}[\iota p^r]\in\bar{\c L}_{W(k)}$ because the proof of Proposition 
\ref{P4.7} from Sections \ref{S4.6}-\ref{S4.7} does not rely on any constructions 
from this Section.
\end{remark} 

Let 
$\bar{\c F}[\iota p^r]_0:=\bar{\c F}
[\iota p^r]-\sigma\bar{\c F}[\iota p^{r-1}]\in\bar{\c L}_{W_M(k)}$. Then 
$$\bar{\c F}[\iota p^r]=\sum _{0\leqslant j
\leqslant r}\sigma ^j\bar{\c F}[\iota p^{r-j}]_0$$

\begin{Prop} \label{P4.4}  
{\rm a)}\ $\bar x_1\equiv -\sum _{\iota ,r}
 \bar{\c F}[\iota p^r]
 t_1^{-\iota p^r}\,\op{mod}\,\bar{\c L}^{[v_0]}_{O'_M(\c K')}+
 \bar{\c N}^{\dag }(p-1)$, 
 where the sum is taken over all $\iota\in\frak{A}^{\op{prim}}_1$ 
 and $0\leqslant r\leqslant m_{\iota }$; 
 \medskip 
 
 {\rm b)}\ if $\iota\in\frak{A}_1^+(p)$, 
 then 
 $V_{\iota }\equiv 
 \sigma ^{-m_{\iota }}\bar{\c F}[\iota p^{m_{\iota }}]\,
 \op{mod}\,\left [\bar{\c L}^{[v_0]},\bar{\c L}\right ]_{W(k)}$.
\end{Prop}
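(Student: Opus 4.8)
The plan is to read relation \eqref{E4.2} as a $\sigma$-twisted difference equation and solve it by successive substitution, tracking $t_1$-exponents carefully so that $\bar x_1$ and the $V_\iota$ get separated off.

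First I would introduce the $\sigma$-semilinear operator $T$ on $\bar{\c A}^{\dag}_{\leqslant 1}[0]_{O'_M(\c K')}$ by $T(Z)=E_0^{-1}\sigma(Z)E_0$. Since $\sigma E_j=E_{j+1}$ and $\sigma\c E_j=\c E_{j+1}$, induction on $j$ gives $T^j(E_0^{-1}\c E_0)=E_0^{-1}E_1^{-1}\cdots E_j^{-1}\c E_j E_{j-1}\cdots E_0$, so the formal sum of Section \ref{S4.5} is $W:=\sum_{j\geqslant 0}T^j(E_0^{-1}\c E_0)=\sum_{\iota,r}t_1^{-\iota p^r}\bar{\c F}[\iota p^r]$, whose coefficients $\bar{\c F}[\iota p^r]$ lie in $\bar{\c L}_{W_M(k)}$ by Proposition \ref{P4.3}. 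Two elementary facts then drive everything. First, $\sigma$ multiplies every $t_1$-exponent by $p$ and the exponents occurring in $E_0^{\pm1}$ are non-negative multiples of $qp^{M-1}$, so $T(Z)$ always has all $t_1$-exponents divisible by $p$; by telescoping this yields $(1-T)W=E_0^{-1}\c E_0$ and makes $1-T$ invertible with $(1-T)^{-1}=\sum_{j\geqslant0}T^j$ along the exponent filtration. Second, if a monomial $t_1^{-b}[D_{\bar a\bar n}]$ of $\bar{\c N}^{\dag}[1,0]$ has $b>p^{M-1}(p-1)b^*$, then condition b) in the definition of $\bar{\c N}^{\dag}[1,0]$ forces $|\bar a|>(p-1)b^*/q$, whence by the choice of $\delta_0$ and {\bf C3} of Section \ref{S2.1} one gets $|\bar a|\geqslant(p-1)v_0$ and the monomial vanishes in $\bar{\c L}(p)=0$; in particular the coefficient of any $t_1^{-\iota p^r}$ with $r>r(\iota)$ in $E_0^{-1}\c E_0$ is zero, and the relevant sums stop at $0\leqslant r\leqslant r(\iota)$.

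Next I would solve \eqref{E4.3}, rewritten as $(1-T)\bar x_1\equiv\sum_{\iota\in\mathfrak{A}_1^+(p)}t_1^{-\iota}V_\iota-E_0^{-1}\c E_0\pmod{[\bar{\c L}^{[v_0]},\bar{\c L}]}$, chain by chain. Writing $E_0^{-1}\c E_0=\sum_{\iota,r}t_1^{-\iota p^r}c_{\iota r}$ (so $c_{\iota0}=\bar{\c F}[\iota]$, since only the $j=0$ term of $W$ contributes at a primitive exponent) and letting $h_{\iota r}$ be the $t_1^{-\iota p^r}$-coefficient of $\bar x_1$, the divisibility fact kills $T(\bar x_1)$ at the primitive exponent $\iota$ and gives $h_{\iota0}=V_\iota-c_{\iota0}$ together with the recursion $h_{\iota r}=\sigma(h_{\iota,r-1})-c_{\iota r}+(\ast_r)$ for $r\geqslant1$, where $(\ast_r)$ is a $W_M(k)$-linear combination of brackets of $\log E_0$ with the $h_{\iota'r'}$ occurring at strictly smaller exponents. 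Because $V_\iota=\bar{\c V}^0B^{\dag}(D^{\dag}_{\iota0})$ lies in $\bar{\c L}^{[v_0]}_{W_M(k)}$ by the very definition of $\bar{\c L}^{[v_0]}$ (Proposition \ref{P2.11}), and because $W$ obeys the companion recursion $\bar{\c F}[\iota p^r]=\sigma(\bar{\c F}[\iota p^{r-1}])+c_{\iota r}+(\ast_r')$ with the opposite sign of $c_{\iota r}$ and the same correction pattern, an induction using that $\bar{\c L}^{[v_0]}$ is an ideal gives $h_{\iota r}+\bar{\c F}[\iota p^r]\equiv\sigma^r(V_\iota)\pmod{[\bar{\c L}^{[v_0]},\bar{\c L}]}$; at $r=0$ this is $h_{\iota0}\equiv-\bar{\c F}[\iota]\pmod{\bar{\c L}^{[v_0]}}$, and propagating it along the chains gives part a). For part b) I would exploit that $\bar x_1\in\bar{\c N}^{\dag}[1,0]$ carries no term at the out-of-range exponent $\iota p^{r(\iota)+1}$: comparing $t_1^{-\iota p^{r(\iota)+1}}$-coefficients in \eqref{E4.3}, where $c_{\iota,r(\iota)+1}=0$ by the second fact, expresses $\sigma(h_{\iota,r(\iota)})$ through the correction at that exponent, i.e.\ (modulo $[\bar{\c L}^{[v_0]},\bar{\c L}]$) through $\bar{\c F}[\iota p^{r(\iota)}]$ alone; feeding this into $h_{\iota,r(\iota)}+\bar{\c F}[\iota p^{r(\iota)}]\equiv\sigma^{r(\iota)}(V_\iota)$ yields $V_\iota\equiv\sigma^{-r(\iota)}\bar{\c F}[\iota p^{r(\iota)}]\pmod{[\bar{\c L}^{[v_0]},\bar{\c L}]_{W_M(k)}}$.

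The main obstacle is part b), specifically the propagation of the correction terms $(\ast_r)$ and $(\ast_r')$ along a chain: one must check that their difference stays inside $[\bar{\c L}^{[v_0]},\bar{\c L}]$, which uses part a) — controlling $h_{\iota r}$ only modulo $\bar{\c L}^{[v_0]}$ — together with the ideal property, and that the out-of-range coefficient $c_{\iota,r(\iota)+1}$ genuinely vanishes, which is exactly where the precise interplay of $r^*$, $N^*$, $\delta_0$ and $v_0$ fixed in Section \ref{S2.1} is needed. One should also verify the termination of $W$ along the chains (so that the relevant sums stop at $r(\iota)$), since Proposition \ref{P4.3} by itself only asserts $\bar{\c F}[\iota p^r]\in\bar{\c L}_{W_M(k)}$ for every $r$.
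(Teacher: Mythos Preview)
Your overall architecture --- solve \eqref{E4.3} as a $\sigma$-twisted difference equation via the operator $T(Z)=E_0^{-1}\sigma(Z)E_0$ and compare $\bar x_1$ with the formal solution $W=\sum_{j\geqslant 0}T^j(E_0^{-1}\c E_0)$ --- is close in spirit to the paper's argument, but the way you organize the induction is genuinely different, and this is where the proposal runs into trouble.

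The paper does \emph{not} induct along ``chains of exponents''. It inducts on the nilpotency class $w$: assuming part~a) modulo $C_w(\bar{\c L})$, one substitutes the known approximation of $\bar x_1$ into the nonlinear part of \eqref{E4.4}, so that the right-hand side becomes a finite, explicitly computable sum $\sum_{\iota,r}\bar{\c F}[\iota p^r]_0t_1^{-\iota p^r}$ modulo $[\bar{\c L}^{[v_0]},\bar{\c L}]+C_{w+1}(\bar{\c L})$; then one applies the operators $\c R$ and $\c S$ of Lemma~\ref{L2.7} term-by-term. This $w$-induction is exactly what tames your correction terms $(\ast_r)$ and $(\ast_r')$: they always land in a higher commutator degree, so at each stage they are controlled by the previous step. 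Your recursion on exponents, by contrast, mixes chains (the bracket at $\iota p^{r+1}$ involves coefficients $h_{\iota'r'}$ with $\iota'\neq\iota$), and for $\iota<0$ the ordering you invoke is not well-founded: there the exponent $b=\iota p^r-c/p$ satisfies $|b|>|\iota p^r|$, so ``strictly smaller exponent'' fails. The $w$-induction is the missing ingredient.

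There is also a concrete gap in your derivation of b). Comparing coefficients at the out-of-range exponent $\iota p^{r(\iota)+1}$ does \emph{not} give $\sigma(h_{\iota,r(\iota)})\equiv 0$ modulo $[\bar{\c L}^{[v_0]},\bar{\c L}]$: the correction brackets there involve $\bar{\c F}[\iota'p^{r'}]\in\bar{\c L}$, not elements of $\bar{\c L}^{[v_0]}$, so what you actually obtain is $\sigma(h_{\iota,r(\iota)})\equiv\bar{\c F}[\iota p^{r(\iota)+1}]-\sigma\bar{\c F}[\iota p^{r(\iota)}]$, leading to $V_\iota\equiv\sigma^{-(r(\iota)+1)}\bar{\c F}[\iota p^{r(\iota)+1}]$ rather than the stated formula with $r(\iota)$. (Note that $\bar{\c F}[\iota p^{r(\iota)+1}]\neq 0$ in general: $W$ is only a formal sum and its tail lies outside $\bar{\c N}^{\dag}[1,0]$, so your ``second fact'' applies to $E_0^{-1}\c E_0$ but not to $W$.) The paper avoids this by applying $\c R$ directly to the range $0\leqslant r\leqslant r(\iota)$ and telescoping $\sum_r\sigma^{-r}\bar{\c F}[\iota p^r]_0=\sigma^{-r(\iota)}\bar{\c F}[\iota p^{r(\iota)}]$, never looking at $r(\iota)+1$; for the out-of-range terms it invokes Lemma~\ref{L4.5}, whose part b) rests precisely on the $w$-inductive hypothesis.
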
 

\begin{proof} Apply  induction on $1\leqslant w< p$ 
to  prove the proposition modulo commutators 
$C_{w}(\bar{\c L})_{O'_M(\c K')}$ 
for all $1\leqslant w\leqslant p$. 
Clearly, a) holds with $w=1$. 

Suppose $1\leqslant w<p$ and a) holds modulo 
$C_{w}(\bar{\c L})_{O'_M(\c K')}$. 

Rewrite \eqref{E4.3} in the following form 

\begin{equation} \label{E4.4} 
\sum _{\op{ch}\iota = 1}t_1^{-\iota} V_{\iota }+
\sigma{\bar x}_1^1-\bar x_1^1\equiv 
E_0^{-1}\c E_0-
\left (E_0^{-1}\sigma\bar x_1^1E_0-\sigma\bar x^1_1\right )\equiv 
\end{equation}
$$\sum _{r\leqslant m_{\iota }+1, \iota }
\bar{\c F}[\iota p^r]_0t_1^{-\iota p^r}
\,\op{mod}\,\left ([\bar{\c L}^{[v_0]},\bar{\c L}]
+C_{w+1}(\bar{\c L})\right )_{O'_M(\c K')}+\bar{\c N}^{\dag }(p-1)\,.
$$
Since the LHS belongs to $\bar{\c N}^{\dag }[0,1]$ 
the above congruence implies that 
for $r(i)<r\leqslant m_{\iota }+1$, $\bar{\c F}[\iota p^r]_0\in 
\left ([\bar{\c L}^{[v_0]},
\bar{\c L}]+C_{w+1}(\bar{\c L})\right )_{W(k)}$. (Indeed, if 
$t_1^{-\iota p^r}[D_{\bar a\bar n}]\in\bar{\c N}[0,1]$ then 
$qp^{M-1}|\bar a|\geqslant \iota p^r$ implies $|\bar a|>(p-1)v_0$ 
and $[D_{\bar a\bar n}]\in\bar{\c L}(p)_{W(k)}=0$.)

Continue the proof of proposition.

Apply the operators $\c S$ and $\c R$ from Lemma \ref{L2.9} 
to the RHS of \eqref{E4.4}. 

If $\iota <0$ then the expression for the appropriate part of 
$\bar x_1$ will be provided modulo $\left ([\bar{\c L}^{[v_0]},
\bar{\c L}]+C_{w+1}(\bar{\c L})\right )_{O'_M(\c K')}+\bar{\c N}^{\dag }(p-1)$ with  
$$\c S\left (\sum _{r}\bar{\c F}
[\iota p^r]_0t_1^{-\iota p^r}\right )=
-\sum _{j\geqslant 0}\sigma ^j\left (\sum _{\iota ,r}
\bar{\c F}[\iota p^r]_0t_1^{-\iota p^r}\right )=
-\sum _{\iota , r\leqslant m_{\iota }}\bar{\c F}[\iota p^r]t^{-\iota p^r}\,.$$
Note that the elements from 
$\bar{\c N}^{\dag }[0](p-1)\subset \bar{\c L}_{\m '}$ will not 
contribute to 
$\sum _{\iota\in\mathfrak{A}_1^+(p)}t^{-\iota }V_{\iota }$. 
This proves part a) in the case $\iota <0$. 
\medskip 

For $\iota >0$,   
$V_{\iota }t_1^{-\iota }$ is congruent modulo 
$\left ([\bar{\c L}^{[v_0]},\bar{\c L}]
+C_{w+_1}(\bar{\c L})\right )_{O'_M(\c K')}$ to  
$$\c R\left (\sum _{0\leqslant r\leqslant m_{\iota }}
\bar{\c F}[\iota p^r]_0t_1^{-\iota p^r}\right )
\equiv 
\sum _{0\leqslant r\leqslant m_{\iota }}\sigma ^{-r}\bar{\c F}
[\iota p^r]_0t_1^{-\iota }\equiv 
-\sigma ^{-m_{\iota }}\bar{\c F}[\iota p^{m_{\iota }}]t_1^{-\iota }\,.$$ 
This proves part b) modulo $C_{w+1}(\bar{\c L})_{W(k)}$. 

In this case $\bar x_1$ will be contributed modulo 
$\bar{\c L}^{[v_0]}_{O'_M(\c K')}$ with  
$$\c S\left (\sum _{0\leqslant r\leqslant m_{\iota }}
\bar{\c F}[\iota p^r]_0t_1^{-\iota p^r}\right )
=\sum _{0\leqslant r\leqslant m_{\iota }}
\sum _{0\leqslant r_1<r}\sigma ^{r_1}\left (t_1^{-\iota }
\sigma ^{-r}\bar{\c F}[\iota p^r]_0\right )
$$
$$
\equiv -\sum _{0\leqslant r\leqslant m_{\iota }}
\sum _{r\leqslant r_1\leqslant m_{\iota }}\sigma ^{r_1}\left (t_1^{-\iota }
\sigma ^{-r}\bar{\c F}[\iota p^r]_0\right )
$$

$$\equiv 
-\sum _{0\leqslant r_1\leqslant m_{\iota }}t_1^{-\iota p^{r_1}}\sigma ^{r_1}\left (
\sum _{r_1\leqslant r\leqslant m_{\iota }}
\sigma ^{-r}\bar{\c F}[\iota p^r]_0\right )
\equiv 
$$

$$\equiv 
-\sum _{0\leqslant r_1\leqslant m_{\iota }}t_1^{-\iota p^{r_1}}
(\bar{\c F}[\iota p^{r_1}]_0+\sigma ^{-1}\bar{\c F}[\iota p^{r_1+1}]_0+
\ldots +\sigma ^{r_1-m_{\iota }}\bar{\c F}[\iota p^{m_{\iota }}]_0)
\equiv 
$$
$$
-\sum _{0\leqslant r\leqslant m_{\iota }}\bar{\c F}[\iota p^r]t_1^{-\iota p^r}$$
because 
$$\sum _{0\leqslant r_1,r\leqslant m_{\iota }}
\sigma ^{r_1}\left (t_1^{-\iota }\sigma ^{-r}\bar{\c F}[\iota p^r]_0\right )
=\sum _{r_1}t_1^{-\iota p^{r_1}}\sigma ^{r_1}\left (\sum _{0\leqslant r\leqslant m_{\iota }}
\sigma ^{-r}\bar{\c F}[\iota p^r]_0\right )\in\bar{\c L}^{[v_0]}_{O'_M(\c K')}$$
\end{proof}



\subsection{A combinatorial statement}\label{S4.6} 

Recall that  the values $\eta (\bar a, \bar n)$ were defined in Section\,\ref{S4.3}. 
If  
$e_{(N^*,0]}:=\sigma ^{N^*-1}e\circ 
\sigma ^{N^*-2}e\circ \dots \circ \sigma e\circ e$ 
then 
$$\wt{\exp}\,e_{(N^*,0]}=E_{N^*-1}\ldots E_1 E_0
=\sum _{\bar a, \bar n}\eta (\bar a,\bar n)
t_1^{-qp^{M-1}\gamma (\bar a,\bar n)}
D_{\bar a\bar n}\in\bar{\c L}_{O'_M(\c K')}\, .$$
As earlier, for all $i\geqslant 0$, 
$E_i=\sigma ^i(E_0)$ and   
$\gamma (\bar a, \bar n)=a_1p^{n_1}+\ldots +a_sp^{n_s}$. 
\medskip

Let $-e_{[0,N^*)}:=(-e)\circ (-\sigma e)\dots (-\sigma ^{N^*-1}e)$. Introduce 
similarly the constants $\eta ^o(\bar a,\bar n)\in W_M(k)$ 
by the following identity  
$$\wt{\exp}(-e_{[0,N^*)})=\sum \eta ^o(\bar a,\bar n)D_{\bar a\bar n}\, .$$

Set $\eta ^o(\bar a):=\eta ^o(\bar a,\bar 0)$. 
It can be easily seen that if 
there is a partition from the 
definition of $\eta $-constants in Sect,\,\ref{S4.3} 
but such that  $m_1<m_2<\dots <m_r$, then 
$$\eta ^o(\bar a, \bar n)=\sigma ^{m_1}
\left (\eta ^o(\bar a^{(1)})\right )\sigma ^{m_2}
\left (\eta ^o(\bar a^{(2)})\right )
\dots \sigma ^{m_r}\left (\eta ^o(\bar a^{(r)})\right )\, .$$
Otherwise,  $\eta ^o(\bar a,\bar n)=0$.  
\medskip 

If we don't care what vectors  $\bar a$ and $\bar n$ are 
we can agree just to write  $\eta (1,\dots ,s)$ instead of 
$\eta (\bar a,\bar n)$ and use similar agreement for $\eta ^o$. 
E.g. the equality  
$e_{(N^*,0]}\circ (-e_{[0,N^*)})=0$  
means that 
\begin{equation} \label{E4.5} 
\sum _{0\leqslant s_1\leqslant s}
\eta (1,\dots ,s_1)\eta ^o(s_1+1,\dots ,s)=
\delta _{0s}\,.
\end{equation}

 For $1\leqslant  s_1\leqslant s<p$, 
consider the subset $\Phi _{ss_1}$ of
permutations $\pi $ of order $s$ such that $\pi (1)=s_1$
and for any $1\leqslant l\leqslant s$,
the subset $\{\pi (1),\dots ,\pi (l)\}\subset [1,s]$ is \lq\lq connected\rq\rq ,
i.e.\, for some  $n(l)\in\N $, 
$$\{\pi (1),\dots ,\pi (l)\}=\{n(l),n(l)+1,\dots ,n(l)+l-1\}.$$
By definition, $\Phi _{s0}=\Phi _{s,s+1}=\emptyset $.

Set $B_{s_1}(1,\dots ,s)=\sum_{\pi\in\Phi _{ss_1} }
\eta (\pi (1),\dots ,\pi (s))$. Then, cf.\,\cite{Ab15},\,Sect.4,  

\begin{Lem} \label{L4.6} Suppose $0\leqslant s_1\leqslant s<p$. Then  
\medskip 

{\rm a)}\ $B_{s_1}(1,\dots ,s)+B_{s_1+1}(1,\dots ,s)=
\eta (s_1,\dots ,1)
\eta (s_1+1,\dots ,s)$;
\medskip 

{\rm b)}\ $\eta ^o(1,\dots ,s)=(-1)^s\eta (s,s-1,\dots ,1)$;
\medskip 

{\rm c)}\ for indeterminates $X_1,\dots ,X_s$, 
$$\sum_{\substack{1\leqslant s_1\leqslant s \\ \pi\in\Phi _{ss_1}}} 
(-1)^{s_1-1}X_{\pi ^{-1}(1)}\dots X_{\pi ^{-1}(s)}=
[\dots [X_1,X_2],\dots ,X_s].$$
\end{Lem}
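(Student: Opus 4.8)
The plan is to prove all three assertions by elementary combinatorics, exactly in the spirit of \cite{Ab15}, Sect.\,4. The first step is to unwind the prefix–connectedness condition defining $\Phi_{ss_1}$ (for $0\le s_1\le s<p$): since $\{\pi(1),\dots,\pi(l)\}$ must be an interval for every $l$, the positions occupied by $X_1,\dots,X_l$ form a block of consecutive positions at every stage, so, starting from $X_1$ at position $s_1$, each subsequent $X_l$ ($l\ge 2$) is attached to the left end or to the right end of the current block. Reading off the value sequence $(\pi(1),\dots,\pi(s))$, this means the letters $<s_1$ occur in the decreasing order $s_1-1,s_1-2,\dots$ and the letters $>s_1$ in the increasing order $s_1+1,s_1+2,\dots$; equivalently, $\Phi_{ss_1}\sqcup\Phi_{s,s_1+1}$ is precisely the set of shuffles of the strictly decreasing word $(s_1,s_1-1,\dots,1)$ with the strictly increasing word $(s_1+1,\dots,s)$, split according to whether the first letter is $s_1$ or $s_1+1$. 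Dually, reading off $\pi^{-1}$, the monomial $X_{\pi^{-1}(1)}\cdots X_{\pi^{-1}(s)}$ attached to $\pi\in\Phi_{ss_1}$ is $X_I^{\downarrow}\,X_1\,X_J^{\uparrow}$, where $I\sqcup J=\{2,\dots,s\}$, $|I|=s_1-1$, and $X_I^{\downarrow}$ (resp.\ $X_J^{\uparrow}$) is the product of the $X_i$, $i\in I$ (resp.\ $i\in J$), in decreasing (resp.\ increasing) order of index; the map $\pi\mapsto(I,J)$ is a bijection from $\bigsqcup_{s_1}\Phi_{ss_1}$ onto the set of ordered partitions of $\{2,\dots,s\}$.

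For (a) I would apply the group-likeness condition $(*_e)$ to the sequence obtained by concatenating the block $(s_1,s_1-1,\dots,1)$ with the block $(s_1+1,\dots,s)$. Its left side is $\eta(s_1,\dots,1)\eta(s_1+1,\dots,s)$, and its right side is the sum of $\eta(\cdot)$ over all shuffles of those two monotone words, which by the description above is $B_{s_1}(1,\dots,s)+B_{s_1+1}(1,\dots,s)$. The boundary cases $s_1=0$ and $s_1=s$ are immediate from $\Phi_{s0}=\Phi_{s,s+1}=\emptyset$ and $\eta(\varnothing)=1$.

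For (b): $(*_e)$ says precisely that $E=\wt{\exp}(e)$ is group-like in $\c A/J^p$ (i.e.\ $\Delta E\equiv E\otimes E$ and $\varepsilon(E)=1$), so its inverse equals $S(E)$ for $S$ the antipode; and this inverse is $\wt{\exp}(-e)$ by the truncated Campbell--Hausdorff identity $\wt{\exp}(e)\wt{\exp}(-e)\equiv\wt{\exp}(e\circ(-e))=1$. Since $S$ is an algebra anti-automorphism restricting to $-\id$ on $\c L$, one has $S(D_{a_10}\cdots D_{a_s0})=(-1)^sD_{a_s0}\cdots D_{a_10}$; comparing the coefficient of $D_{a_10}\cdots D_{a_s0}$ on the two sides of $\wt{\exp}(-e)=S(\wt{\exp}(e))$ gives $\eta^o(a_1,\dots,a_s)=(-1)^s\eta(a_s,\dots,a_1)$, and $a_i=i$ is the assertion. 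Alternatively (b) can be obtained by a direct induction on $s$ from \eqref{E4.5}.

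For (c) I would first prove by induction on $s$ the classical expansion $[\dots[X_1,X_2],\dots,X_s]=\sum_{I\sqcup J=\{2,\dots,s\}}(-1)^{|I|}X_I^{\downarrow}X_1X_J^{\uparrow}$: in the step $[\,\cdot\,,X_s]=(\,\cdot\,)X_s-X_s(\,\cdot\,)$ the largest index $s$ is appended at the right end of $X_J^{\uparrow}$ in the first summand and at the left end of $X_I^{\downarrow}$ in the second, and the signs agree because $-(-1)^{|I|}=(-1)^{|I\cup\{s\}|}$. Combining this with the monomial description of $\Phi_{ss_1}$ from the first step — under which $(-1)^{s_1-1}=(-1)^{|I|}$, the monomial of $\pi$ is $X_I^{\downarrow}X_1X_J^{\uparrow}$, and $\bigsqcup_{s_1}\Phi_{ss_1}$ lists every ordered partition $(I,J)$ exactly once — yields (c). The one genuinely delicate point is the bookkeeping between the three incarnations of a permutation that occur here — $\pi$ itself (entering $B_{s_1}$ via $\eta(\pi(1),\dots,\pi(s))$), its inverse $\pi^{-1}$ (entering the monomial $X_{\pi^{-1}(1)}\cdots X_{\pi^{-1}(s)}$ in (c)), and the order-reversals used in (a) and (b) — and verifying that the ``attach to an end of a growing block'' picture is compatible with all of them; once this is pinned down the rest is routine, and no characteristic obstruction arises since $s<p$ keeps the combinatorics purely set-theoretic.
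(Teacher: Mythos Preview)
Your argument is correct. The paper does not give its own proof of this lemma: it simply records the statement with the reference ``cf.\ \cite{Ab15}, Sect.\,4'' and moves on. Your self-contained combinatorial proof is exactly in that spirit, and each step checks out: the ``grow an interval from $s_1$'' description of $\Phi_{ss_1}$ is accurate and immediately gives the bijection between $\Phi_{ss_1}\sqcup\Phi_{s,s_1+1}$ and the shuffles of $(s_1,\dots,1)$ with $(s_1+1,\dots,s)$, so (a) is just $(*_e)$ applied to that concatenation; the antipode argument for (b) is valid since $S$ preserves $J$ and hence $J^p$, so group-likeness and $S(g)=g^{-1}$ make sense in $\c A/J^p$; and for (c) your identification of the monomial of $\pi$ with $X_I^{\downarrow}X_1X_J^{\uparrow}$ via $I=\{i\geqslant 2:\pi(i)<s_1\}$ is a genuine bijection onto ordered partitions of $\{2,\dots,s\}$ (each $(I,J)$ encodes the left/right choices and hence determines $\pi$), and the standard inductive expansion of the left-normed commutator matches term by term with the correct sign $(-1)^{|I|}=(-1)^{s_1-1}$. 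Nothing is missing.
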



\subsection{Lie elements $\bar{\c F}[\iota ]$ 
and $\bar{\c F}[\iota ]_0$} \label{S4.7} 

Agree to write $\bar n=(n_1,\dots ,n_s)\geqslant M$ if all  
$n_i\geqslant M$, and treat similarly $
\bar n>M$, $\bar n\leqslant M$ and $\bar n<M$. 
\medskip 

 For $1\leqslant s_1\leqslant s$, let 
$\gamma ^*_{[s_1,s]}(\bar a,\bar n)=
\sum\limits _{s_1\leqslant u\leqslant s}a_up^{n^*_u}$  
where $n_u^*=0$ if $n_u=M(\bar n):=\max\{n_1,\dots ,n_s\}$ and 
$n_u^*=-\infty $  (i.e. $p^{n_u^*}=0$), otherwise. 
\medskip 

 Suppose $\iota\in\mathfrak{A}^0_1$ is given as in 
 \eqref{E4.1}. Then 
$$\bar{\c F}[\iota ]=\sum _{\bar a,\bar n, A}
\sum _{1\leqslant s_1\leqslant s}
 p^A\eta ^o(1,\dots ,s_1-1)\eta (s_1,\dots ,s)
 \gamma _{[s_1,s]}^{*}(\bar a,\bar n)D_{\bar a\bar n}$$ 
Here the first sum is taken over 
 all $(\bar a,\bar n)$ of lengths $1\leqslant s<p$ such that 
 $\bar n\geqslant 0$,  
 $\gamma (\bar a,\bar n)=p^{M(\iota )}\gamma (\iota )$ and 
 $M(\bar n)=M(\iota )+A$. 
 
 The following proposition relates the elements $\bar{\c F}[\iota ]$ and the elements 
 $\bar{\c F}_{\gamma ,-N}$ from Section\,\ref{S4.3}. 
 \medskip

\begin{Prop} \label{P4.7}
If $\iota $ is given in notation \eqref{E4.1}  and $n\geqslant 0$ 
then 
$$\bar{\c F}[\iota p^n]=\sigma ^{M(\iota )+n}
\bar{\c F}^0_{\gamma (\iota ), -(M(\iota )+n)}\,.$$
\end{Prop}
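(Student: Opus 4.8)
The plan is to compare the two explicit expansions directly. On the one hand, $\bar{\c F}[\iota p^n]$ is by definition extracted as the coefficient of $t_1^{-\iota p^n}$ in the formal sum $\sum_{j\geqslant 0} E_0^{-1}E_1^{-1}\cdots E_j^{-1}\c E_j E_{j-1}\cdots E_0$, and the explicit shape of this coefficient is the one written out just before the statement: a sum over $(\bar a,\bar n)$ with $\bar n\geqslant 0$, $\gamma(\bar a,\bar n)=p^{M(\iota)}\gamma(\iota)$, $M(\bar n)=M(\iota)+A$, of products $p^A\,\eta^o(1,\dots,s_1-1)\,\eta(s_1,\dots,s)\,\gamma^*_{[s_1,s]}(\bar a,\bar n)\,D_{\bar a\bar n}$, summed over $1\leqslant s_1\leqslant s$. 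On the other hand, $\bar{\c F}^0_{\gamma(\iota),-N}$ with $N=M(\iota)+n$ is by definition $\sum a_1p^{n_1}\eta(\bar a,\bar n)[D_{\bar a\bar n}]$ over $\bar a,\bar n$ with $n_1\geqslant 0$, all $n_i\geqslant -N$, and $\gamma(\bar a,\bar n)=\gamma(\iota)$. So the strategy is: apply $\sigma^{M(\iota)+n}$ to $\bar{\c F}^0_{\gamma(\iota),-(M(\iota)+n)}$, which shifts every $n_i\mapsto n_i+M(\iota)+n$ (moving the index range to $n_i\geqslant 0$ with top index $M(\iota)+n$) and multiplies $\gamma$ by $p^{M(\iota)+n}$, and then check that the resulting linear combination of monomials equals the displayed expansion of $\bar{\c F}[\iota p^n]$ term by term.

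First I would reduce to the case $n=0$: both sides are visibly $\sigma$-equivariant in the sense that $\bar{\c F}[\iota p^{n}] $ and $\sigma^n\bar{\c F}[\iota]$ agree once the index-shift bookkeeping is done (this is essentially the relation $\bar{\c F}[\iota p^r]=\sum_{0\leqslant j\leqslant r}\sigma^j\bar{\c F}[\iota p^{r-j}]_0$ together with the analogous splitting of $\bar{\c F}^0$), so it suffices to prove $\bar{\c F}[\iota]=\sigma^{M(\iota)}\bar{\c F}^0_{\gamma(\iota),-M(\iota)}$. Next I would unwind the product $E_0^{-1}E_1^{-1}\cdots E_j^{-1}\c E_j E_{j-1}\cdots E_0$ in the enveloping algebra using $\widetilde{\exp}$: the $E_i^{-1}$ on the left contribute $\eta^o$-type structural constants (by Lemma 4.6b, $\eta^o$ is the "reversed" $\eta$), the $E_{j-1}\cdots E_0$ on the right contribute $\eta$-type constants, and the single $\c E_j$ supplies the factor $\gamma^*_{[s_1,s]}$ — this is exactly the $p^A\sum_A t_1^{\cdots+p^{M-1-A}b^*}$ derivative-like term in the definition of $\c E_0$, and the breakpoint $s_1$ records which factor came from $\c E_j$. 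Collecting the coefficient of a fixed power $t_1^{-\iota}$ then forces $\gamma(\bar a,\bar n)=p^{M(\iota)}\gamma(\iota)$ and $M(\bar n)=M(\iota)+A$, giving precisely the displayed formula for $\bar{\c F}[\iota]$.

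The core of the argument is then the purely combinatorial identity that, after applying Lemma 4.6c (the "connected-permutation" formula turning $\sum(-1)^{s_1-1}X_{\pi^{-1}(1)}\cdots X_{\pi^{-1}(s)}$ into the iterated bracket $[\dots[X_1,X_2],\dots,X_s]$) and Lemma 4.6a–b, the associative-algebra expression $\sum_{s_1}\eta^o(1,\dots,s_1-1)\eta(s_1,\dots,s)\,\gamma^*_{[s_1,s]}(\bar a,\bar n)\,D_{\bar a\bar n}$ collapses to the Lie expression $\gamma(\bar a,\bar n)\,\eta(\bar a,\bar n)\,[D_{\bar a\bar n}]$ — note $\gamma^*_{[s_1,s]}$ only sees the top-index components, so summing its contributions over all cyclic-type insertions rebuilds the full $\gamma(\bar a,\bar n)=a_1p^{n_1}+\cdots$, and on the $\sigma^{M(\iota)}$-shifted side the leading term $a_1p^{n_1}$ is exactly what appears in the definition of $\bar{\c F}^0$. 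I expect the main obstacle to be precisely this bookkeeping: matching the index ranges and the exponent $A$ under the Frobenius shift, and verifying that the telescoping sum over $j$ together with the $\eta^o\cdot\eta$ convolution (controlled by \eqref{E4.5} and Lemma 4.6a) reproduces the single weight $\eta(\bar a,\bar n)$ with the correct $\gamma$-multiplier, rather than an off-by-one or a spurious cross term. Once the $n=0$ identity is checked on monomials, the general $n$ follows by the $\sigma$-equivariance reduction, and since $\bar{\c F}^0_{\gamma(\iota),-N}\in\c L_{W_M(k)}$ by construction, Proposition 4.3 (that $\bar{\c F}[\iota p^r]\in\bar{\c L}_{W_M(k)}$) drops out as a corollary.
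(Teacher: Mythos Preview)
Your approach is essentially the one in the paper: both proofs manipulate the displayed formula for $\bar{\c F}[\iota p^n]$ using Lemma~4.6(b) to rewrite $\eta^o(1,\dots,s_1-1)=(-1)^{s_1-1}\eta(s_1-1,\dots,1)$, then Lemma~4.6(a) to introduce the $B_{s_1}$'s, telescope over $s_1$ so that $\gamma^*_{[s_1,s]}-\gamma^*_{[s_1+1,s]}=a_{s_1}p^{n_{s_1}^*}$, and finally apply Lemma~4.6(c) to collapse the alternating sum of monomials into the iterated bracket.

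Two small points. First, the reduction to $n=0$ is unnecessary and not quite as ``visible'' as you suggest: the relation $\bar{\c F}[\iota p^r]=\sum_j\sigma^j\bar{\c F}[\iota p^{r-j}]_0$ does not by itself give $\bar{\c F}[\iota p^n]=\sigma^n\bar{\c F}[\iota]$; what is actually true is that replacing $\iota$ by $\iota p^n$ shifts $M(\iota)$ to $M(\iota)+n$ in the explicit formula, and the paper simply carries the general $n$ through the whole computation, which is cleaner. Second, your phrase ``summing its contributions over all cyclic-type insertions rebuilds the full $\gamma(\bar a,\bar n)$'' is not what happens: after telescoping and reindexing via $\pi\in\Phi_{ss_1}$, what survives is a single factor $a_1p^{n_1^*}$ (with $n_1=M(\bar n)$, so $p^A a_1p^{n_1^*}=a_1p^{n_1}$ after the $\sigma$-shift), exactly matching the weight in the definition of $\bar{\c F}^0$. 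You do say this correctly in the next clause, but the earlier sentence is misleading.
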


\begin{proof} We have 
$$\sigma ^{-(M(\iota )+A+n)}\bar{\c F}[\iota p^n]=
\sum _{\substack{1\leqslant s_1\leqslant s<p\\ (\bar a,\bar n)}}p^A
\eta ^o(1,\dots ,s_1-1)\eta (s_1,\dots ,s)
\gamma _{[s_1,s]}^{*}(\bar a,\bar n)D_{\bar a,\bar n}\, ,$$ 
where the sum is taken for $(\bar a,\bar n)$ with $M(\bar n)=A$, 
$\bar n\geqslant -(M(\iota )+n)$  
and $\gamma (\bar a,\bar n)=\gamma (\iota )=p^A\alpha $. 
By Lemma \ref{L4.2},   
$$\eta ^o(1,\dots ,s_1-1)=(-1)^{s_1-1}\eta (s_1-1,\dots ,1)\,$$  
and we obtain 
$$\sum _{\substack{1\leqslant s_1\leqslant s<p \\ (\bar a,\bar n) }}
(-1)^{s_1-1}(B_{s_1-1}(1,\dots ,s)+
B_{s_1}(1,\dots ,s))p^A\gamma ^*_{[s_1,s]}(\bar a,\bar n)D_{(\bar a\bar n)}=$$

$$\sum _{\substack{1\leqslant s_1\leqslant s<p \\ (\bar a,\bar n)}}
(-1)^{s_1-1}B_{s_1}(1,\dots ,s)p^A(\gamma ^*_{[s_1,s]}(\bar a,\bar n)-
\gamma ^*_{[s_1+1,s]}(\bar a,\bar n)){D}_{(\bar a,\bar n)}=$$

$$\sum_{\substack{1\leqslant s<p \\ (\bar a,\bar n)}}
\sum_{1\leqslant s_1\leqslant s} (-1)^{s_1-1}
B_{s_1}(1,\dots ,s)p^Aa_{s_1}p^{n_{s_1}^*}{D}_{(\bar a\bar n)}=$$

$$\sum_{\substack{1\leqslant s<p \\ (\bar a,\bar n)}}
\sum_{\substack{1\leqslant s_1\leqslant s\\ \pi\in\Phi _{ss_1}}}
(-1)^{s_1-1}\eta ({\pi (1)},\dots ,{\pi (s)})p^A
a_{s_1}p^{n_{s_1}^*}D_{(\bar a,\bar n)}=$$

$$ \sum_{\substack{1\leqslant s<p \\ (\bar a,\bar n)}}
\eta (1,\dots ,s)a_1p^A
\sum_{\substack{1\leqslant s_1\leqslant s\\ \pi\in\Phi _{ss_1} }}
(-1)^{s_1-1}D_{a_{\pi ^{-1}(1)}n_{\pi ^{-1}(1)}}\dots
D_{a_{\pi ^{-1}(s)}n_{\pi ^{-1}(s)}} =$$
$$\sum _{\substack{1\leqslant s<p\\(\bar a,\bar n)}}
\eta (1,\dots ,s)a_1p^{n_1}[\dots [D_{a_1n_1},D_{a_2n_2}],\dots ,D_{a_sn_s}]
=\bar{\c F}^0_{\gamma (\iota ) ,-(M(\iota )+n)}\,.$$
because $M(\bar n)=n_1$. The proposition is proved. 
\end{proof} 

According to remark from Section \ref{S4.5} this finishes the proof of Theorem \ref{T4.1} 
(we should use $n=m_{\iota }$ in the last Proposition). 

\subsection{Description of $\bar{\c L}^{[v_0]}$  from \cite{Ab3}}  \label{S4.8}
We are going to use Theorem\,\ref{T4.1} to recover the description of 
$\bar{\c L}^{[v_0]}$ obtained 
in \cite{Ab3} in the context of Lie algebra $\bar{\c L}$.

\begin{Thm} \label{T4.9}
Let $\wt{N}=N^*-1$.  Then the  ideal $\bar{\c L}^{[v_0]}$ is the 
minimal ideal in $\bar{\c L}$ 
 such that $\bar{\c L}_{W_M(k)}^{[v_0]}$ contains all 
 $\bar{\c F}^0_{\gamma ,-\wt{N}}$ with $\gamma\geqslant v_0$.
\end{Thm}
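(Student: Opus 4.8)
The plan is to deduce Theorem~\ref{T4.9} from Theorem~\ref{T4.1} by repackaging the generating set of the latter into the uniform shape $\bar{\c F}^0_{\gamma,-\widetilde N}$. The bridge is Proposition~\ref{P4.7} together with Corollary~\ref{C4.8}: for $\iota\in\mathfrak A^+_1(p)$ written as in \eqref{E4.1} and any $n\geqslant 0$ one has $\bar{\c F}[\iota p^n]=\sigma^{M(\iota)+n}\bar{\c F}^0_{\gamma(\iota),-(M(\iota)+n)}\in\bar{\c L}_{W_M(k)}$. Since $\bar{\c L}^{[v_0]}=\bar{\c V}(\bar{\c L}^{\dag[v_0]})$ is the image of a homomorphism of Lie $\Z/p^M$-algebras, the submodule $\bar{\c L}^{[v_0]}_{W_M(k)}=\bar{\c L}^{[v_0]}\otimes W_M(k)$ is stable under the $\sigma$-semilinear automorphism $\sigma$ of $\bar{\c L}_{W_M(k)}$; hence $\bar{\c F}[\iota p^n]\in\bar{\c L}^{[v_0]}_{W_M(k)}$ if and only if $\bar{\c F}^0_{\gamma(\iota),-(M(\iota)+n)}\in\bar{\c L}^{[v_0]}_{W_M(k)}$. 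So it is enough to (i) exhibit the right universal depth, (ii) translate Theorem~\ref{T4.1}, and (iii) match the two index sets.

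For the depth: I claim $M(\iota)+m(\iota)\leqslant\widetilde N=N^*-1$ for every $\iota\in\mathfrak A^+_1(p)$. Indeed $\iota p^{m(\iota)}\in\mathfrak A^0_1\subset\mathfrak A^0$; writing it in the form of the definition of $\mathfrak A^0$, the $p$-power factored out is $<N^*$ (Proposition~\ref{P2.1}a and the Remark after the definition of $\mathfrak A^0$) and the single $\beta$-exponent (here $\op{ch}=1$) is $\leqslant M-1$, so $v_p(\iota p^{m(\iota)})<N^*+M-1$. On the other hand $\iota p^{m(\iota)}=p^{M-1+M(\iota)+m(\iota)}(q\gamma(\iota)-b^*)$ with $q\gamma(\iota)-b^*\equiv-b^*\not\equiv0\bmod p$ because $b^*\in\Z^+(p)$, so $v_p(\iota p^{m(\iota)})=M-1+M(\iota)+m(\iota)$; comparing, $M(\iota)+m(\iota)\leqslant N^*-1$. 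Since $M(\iota)=m-A\leqslant 0$ for a primitive $\iota$ (the primitivity in $\mathfrak A^+(p)$ forces $m=0$), the value $m_\iota:=\widetilde N-M(\iota)$ is $\geqslant\widetilde N\geqslant 0$ and $\geqslant m(\iota)$, hence an admissible choice in Theorem~\ref{T4.1}. With this choice Theorem~\ref{T4.1} states precisely that $\bar{\c L}^{[v_0]}$ is the minimal ideal of $\bar{\c L}$ containing all $\bar{\c F}^0_{\gamma(\iota),-\widetilde N}$, $\iota\in\mathfrak A^+_1(p)$. (One can also reach this via the minimal choice $m_\iota=m(\iota)$ and Lemma~\ref{L4.5}, which puts the deeper layers $\bar{\c F}[\iota p^r]$, $r>r(\iota)=m(\iota)$, into $[\bar{\c L}^{[v_0]},\bar{\c L}]$ modulo higher commutators, so that passing from the natural depth to $\widetilde N$ does not change the ideal — the commutator-degree induction used throughout Sections~\ref{S3.5} and \ref{S4.5}.)

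It remains to identify $\{\gamma(\iota):\iota\in\mathfrak A^+_1(p)\}$ with the index set of Theorem~\ref{T4.9}. By Proposition~\ref{P2.4}b every $\iota\in\mathfrak A^+_1(p)$ has $\gamma(\iota)\geqslant v_0$, so the generators produced above are among those listed in Theorem~\ref{T4.9}. Conversely, if $\gamma\geqslant v_0$ and $\bar{\c F}^0_{\gamma,-\widetilde N}\neq0$, then a surviving monomial $[D_{\bar a\bar n}]$ satisfies $\gamma=\gamma(\bar a,\bar n)$ and, by Corollary~\ref{C1.8} together with $\bar{\c L}(p)=0$, has $|\bar a|<v_0U(p-1,n_1)\leqslant(p-1)v_0$, whence $\gamma<(p-1)v_0$; moreover, absorbing $p$-powers as in the proof of Proposition~\ref{P2.4} one reduces to the case $\gamma=p^A\alpha$ with $\alpha\in\Z$, $p\nmid\alpha$, $0\leqslant A\leqslant M-1$. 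Then $\iota:=p^{M-1-A}(q\gamma-b^*)$ lies in $\mathfrak A^+_1(p)$: it is positive since $\gamma\geqslant v_0>b^*/q$ by condition {\bf C3}; $q\gamma-b^*\equiv-b^*\bmod p$ is prime to $p$, so $\iota$ is primitive; $\op{ch}(\iota)=1$ and $\kappa(\iota)=1\leqslant p-1$ as $\gamma<(p-1)v_0$; and $\iota\leqslant p^{M-1}(p-1)b^*$ because $q\gamma<q(p-1)v_0<pb^*$ (again {\bf C3}, using $q>p$). As $\gamma(\iota)=\gamma$, the element $\bar{\c F}^0_{\gamma,-\widetilde N}=\bar{\c F}^0_{\gamma(\iota),-\widetilde N}$ lies in $\bar{\c L}^{[v_0]}_{W_M(k)}$; the remaining values of $\gamma$ give $\bar{\c F}^0_{\gamma,-\widetilde N}=0$ and may be discarded. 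Combining with the previous paragraph identifies $\bar{\c L}^{[v_0]}$ with the minimal ideal containing all $\bar{\c F}^0_{\gamma,-\widetilde N}$ with $\gamma\geqslant v_0$.

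I expect the main obstacle to be the depth bookkeeping in the second paragraph: keeping the $\iota$-dependent natural depth $M(\iota)+m(\iota)$ compatible with the uniform $\widetilde N$, and checking that the sharp inequality $M(\iota)+m(\iota)\leqslant N^*-1$ holds in all cases (including the degenerate small-$v_0$ situations, where both sides of Theorem~\ref{T4.9} collapse and the statement is trivial). The combinatorial matching of the index sets is routine once Corollary~\ref{C1.8} confines the surviving $\gamma$ to $[v_0,(p-1)v_0)$ and condition {\bf C3} makes the explicit $\iota=p^{M-1-A}(q\gamma-b^*)$ admissible.
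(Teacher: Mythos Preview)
The first half of your argument—uniformising the depth to $\widetilde N$ via Theorem~\ref{T4.1} and Proposition~\ref{P4.7}—is essentially the paper's own route. (Your aside that primitivity forces $m=0$ is false: $m$ can be positive whenever $p^m\alpha$ has a unit constant term. But this is harmless, since $M(\iota)=m-A\leqslant m<N^*$ already yields $m_\iota=\widetilde N-M(\iota)\geqslant 0$.)

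The genuine gap is in your converse step. You try to show that every $\gamma\geqslant v_0$ with $\bar{\c F}^0_{\gamma,-\widetilde N}\neq 0$ equals $\gamma(\iota)$ for some $\iota\in\mathfrak A^+_1(p)$, but the argument does not go through. The implication ``$|\bar a|<v_0\,U(p-1,n_1)$, whence $\gamma<(p-1)v_0$'' is false once $n_1>0$: $\gamma=\gamma(\bar a,\bar n)=\sum a_ip^{n_i}$ is weighted by positive powers of $p$ and can be much larger than $|\bar a|$. Your construction $\iota=p^{M-1-A}(q\gamma-b^*)$ with $\gamma=p^A\alpha$, $\alpha\in\Z$, $p\nmid\alpha$, further presupposes that $\gamma$ is an integer with $0\leqslant v_p(\gamma)\leqslant M-1$; neither holds in general, since $\gamma(\bar a,\bar n)$ may carry $p$-denominators from negative $n_i$. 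And even when your $\iota$ is defined, once the false bound $\gamma<(p-1)v_0$ is removed you have not verified $|\iota|\leqslant p^{M-1}(p-1)b^*$ or $\kappa(\iota)\leqslant p-1$.

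The paper does not attempt this direct matching. It isolates the residual $\gamma$'s in Proposition~\ref{P4.10} and treats them by a recursive peeling: $\bar{\c F}^0_{\gamma,-\widetilde N}$ is expanded as a combination of brackets $[\ldots[\bar{\c F}^0_{\gamma',-(\widetilde N-1)},D_{a'_1,-\widetilde N}],\ldots,D_{a'_r,-\widetilde N}]$; either the tail $a'_1+\ldots+a'_r\geqslant(p-1)v_0$ kills the term in $\bar{\c L}(p)$, or one checks $q\gamma'-b^*>0$, hence $\gamma'\geqslant v_0$, and either $\gamma'=\gamma(\iota')$ (done) or one iterates at depth $\widetilde N-1$. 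This descent is the missing ingredient; your proof would need either this argument or a correct proof that the set $\{\gamma(\iota):\iota\in\mathfrak A^+_1(p)\}$ already exhausts the relevant $\gamma$'s.
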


\begin{proof} By Theorem\,\ref{T4.1},  $\bar{\c L}^{[v_0]}$ 
is the minimal ideal in $\bar {\c L}$ such that 
$\bar{\c L}^{[v_0]}_{W(k)}$ contains all 
$\bar{\c F}[\iota p^{m_{\iota }}]$, 
where $\iota \in\frak{A}_1^+(p)$ and $m_{\iota }
\geqslant 0$ is chosen such that 
$\iota p^{m_{\iota }}\notin \mathfrak{A}^0$. 

By \eqref{E4.1},  
$$\iota =p^m(qp^{M-1}\alpha -p^{M-1-A}b^*)=
p^{M(\iota )+M-1}(q\gamma (\iota )-b^*)
\in\mathfrak{A}_1^+(p)\,,$$
where $\gamma (\iota )=p^A\alpha $, $\alpha\in A[p-1,m]$ 
and $M(\iota )=m-A$. 
Note that $\iota\in\mathfrak{A}^+_1(p)$ means mainly 
that $\iota \leqslant p^{M-1}(p-1)b^*$. 
In this case by Prop.\,\ref{P4.7}, $\bar{\c F}[\iota p^{m_{\iota }}]$ 
can be replaced by 
$\bar{\c F}^0_{\gamma (\iota ),-\wt{N}}$ because 
for some $m_{\iota }\geqslant r(\iota )$, $\wt{N}=
N^*-1={M}(\iota )+m_{\iota }$. 
Note also that $\gamma (\iota )\in A[p-1, M(\iota )]$. 
\medskip 

As a result, it remains to verify the following proposition. 
\end{proof}

\begin{Prop}\label{P4.10} Suppose $\gamma\geqslant v_0$ 
is such that 
for any $\iota\in\mathfrak{A}^+_1(p)$, 
$\gamma \ne\gamma (\iota )$ and 
$\gamma \in A[p-1,\wt{N}]$. Then 
$\bar{\c F}^0_{\gamma ,-\wt{N}}\in\bar{\c L}^{[v_0]}_{W(k)}$. 
\end{Prop}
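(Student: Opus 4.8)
The plan is to reduce Proposition \ref{P4.10} to a statement already inside the machinery of Sections \ref{S3}--\ref{S4} by exhibiting, for each admissible $\gamma\geqslant v_0$ not of the form $\gamma(\iota)$, a companion exponent $\iota'\in\mathfrak{A}^+_1(p)$ whose associated Lie element $\bar{\c F}^0_{\gamma(\iota'),-\wt N}$ \emph{is} already known to lie in $\bar{\c L}^{[v_0]}_{W_M(k)}$ by Theorem \ref{T4.1}, and such that $\bar{\c F}^0_{\gamma,-\wt N}$ differs from $\bar{\c F}^0_{\gamma(\iota'),-\wt N}$ only by terms that are already in $\bar{\c L}^{[v_0]}_{W_M(k)}+C_{w+1}(\bar{\c L})_{W_M(k)}$. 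Concretely, given $\gamma\geqslant v_0$ with $v_p(\gamma)\geqslant -\wt N$, I would first normalize: write $\gamma=p^{-\wt N}\gamma_0$ with $\gamma_0\in\Z$, and form the candidate $\iota'=p^{M-1+M(\iota')}(q\gamma-b^*)$ in the notation \eqref{E4.1}, choosing $M(\iota')$ so that $|\iota'|\leqslant p^{M-1}(p-1)b^*$. Since $\gamma\geqslant v_0$, Prop.\,\ref{P2.1}b) (applied through the inequality $q\gamma-b^*\geqslant qv_0-b^*>0$) guarantees $\iota'>0$, and the primality condition on $\iota'$ (i.e.\ $\iota'\in\mathfrak{A}^{\op{prim}}_1$) is the only thing that can fail.

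So the dichotomy to resolve is exactly the one stated: either $v_p(\gamma)+M(\iota')=0$ in the integer sense (equivalently $\gcd$ of the two ``$p$-parts'' is prime to $p$), in which case $\iota'\in\mathfrak{A}^+_1(p)$ outright and $\gamma=\gamma(\iota')$, contradicting the hypothesis $\gamma\neq\gamma(\iota)$ — hence this case is vacuous and there is nothing to prove; or the ``$p$-part'' obstruction forces the other branch, namely $M(\iota')+M-1=0$, i.e.\ $A=M-1$ and $m=0$ in the notation of the proof of Theorem \ref{T4.9}. In the second branch $\iota'=q\gamma-b^*$ literally (no extra powers of $p$), and now $\gamma\geqslant v_0$ together with condition {\bf C3} from Section \ref{S2.1} should show $\gamma\geqslant v_0-\delta_0$, hence by the choice of $\delta_0$ the value $p^{\wt N}\gamma=\gamma_0\in A[p-1,\wt N]$ after multiplying out; then Prop.\,\ref{P4.7} identifies $\bar{\c F}[\iota' p^{r}]$ with $\sigma^{\bullet}\bar{\c F}^0_{\gamma,-\wt N}$ for suitable $r\geqslant r(\iota')$, and since $\sigma$ is an automorphism of $\bar{\c L}_{W_M(k)}$ preserving the ideal $\bar{\c L}^{[v_0]}_{W_M(k)}$, membership of $\bar{\c F}^0_{\gamma,-\wt N}$ follows from membership of $\bar{\c F}[\iota' p^{r(\iota')}]$, which is Theorem \ref{T4.1}.

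The step I expect to be the real obstacle is handling the case where $q\gamma-b^*$ is divisible by $p$, so that $\iota'=q\gamma-b^*$ is \emph{not} primitive and hence $\iota'\notin\mathfrak{A}^+_1(p)$: here one cannot directly quote Theorem \ref{T4.1} for $\gamma$ itself and must instead run the induction on nilpotence class $w$ (as in Section \ref{S3.5} and the proof of Prop.\,\ref{P4.4}), pushing the relation \eqref{E4.2}/\eqref{E4.3} one commutator-degree further and checking that the coefficient of $t_1^{-\iota'}$ on the right-hand side of \eqref{E4.3} — which is $\bar{\c F}[\iota' p^{r}]_0$ for the appropriate $r$ — is forced into $\bar{\c L}^{[v_0]}_{W_M(k)}+C_{w+1}(\bar{\c L})_{W_M(k)}$ by the inductive hypothesis, using Lemma \ref{L4.5}b) to discard the tail $r>r(\iota')$. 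Once that bookkeeping is done, summing over $j$ via the telescoping identity $\bar{\c F}[\iota p^r]=\sum_{0\leqslant j\leqslant r}\sigma^j\bar{\c F}[\iota p^{r-j}]_0$ and invoking Prop.\,\ref{P4.7} delivers $\bar{\c F}^0_{\gamma,-\wt N}\in\bar{\c L}^{[v_0]}_{W_M(k)}$, completing the induction and hence the proposition.
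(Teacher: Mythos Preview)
Your proposal misidentifies the obstruction. The hypothesis $\gamma\ne\gamma(\iota)$ for all $\iota\in\mathfrak{A}^+_1(p)$ is \emph{not} a primitivity failure: when $v_p(\gamma)\geqslant 0$ the candidate $\iota'=q\gamma-b^*$ is automatically prime to $p$ (since $p\mid q$ and $\gcd(b^*,p)=1$), so your ``real obstacle'' case $p\mid q\gamma-b^*$ simply never occurs. The actual obstruction, in \emph{both} branches of the dichotomy quoted from the proof of Theorem~\ref{T4.9}, is the \emph{size bound}: every admissible candidate $p^{M-1+M'}(q\gamma-b^*)$ exceeds $p^{M-1}(p-1)b^*$ and therefore lies outside $\mathfrak{A}^0$ altogether. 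Once $\iota'\notin\mathfrak{A}^0$, Theorem~\ref{T4.1} cannot be invoked for it, Proposition~\ref{P4.7} (stated for $\iota$ in the form \eqref{E4.1}) does not apply, and Lemma~\ref{L4.5}b) is unavailable too, since that lemma concerns $\bar{\c F}[\iota p^r]$ with $\iota\in\mathfrak{A}^{\op{prim}}_1\subset\mathfrak{A}^0$ and $r\geqslant r(\iota)+1$, whereas here the primitive part of $\iota'$ is already outside $\mathfrak{A}^0$. Your nilpotence-class induction via \eqref{E4.3} therefore has nothing to grab onto.

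The paper's argument is structurally different: it is a descent on the depth parameter. One expresses $\bar{\c F}^0_{\gamma,-\wt N}$ as a $W_M(k)$-linear combination of brackets
\[
[\ldots[\bar{\c F}^0_{\gamma',-(\wt N-1)},D_{a_1',-\wt N}],\ldots,D_{a_r',-\wt N}],
\qquad \gamma=\gamma'+(a_1'+\cdots+a_r')p^{-\wt N}.
\]
If the tail $a_1'+\cdots+a_r'\geqslant (p-1)v_0$ the bracket already lies in $\bar{\c L}(p)_{W_M(k)}=0$; otherwise the size inequality from case~1) forces $q\gamma'-b^*>0$, hence $\gamma'\geqslant v_0$ by the constants of Section~\ref{S2.1}. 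Either $\gamma'=\gamma(\iota')$ for some $\iota'\in\mathfrak{A}^+_1(p)$ and Theorem~\ref{T4.1} finishes that term, or one repeats the peeling with $\gamma'$ at depth $\wt N-1$; eventually one lands in case~2) and the same peeling at depth $0$ terminates the process. None of this descent mechanism appears in your proposal, and it is where the content of the proof lies.
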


\begin{proof} 
We can assume that 
$p^{M-1+\wt{N}}(q\gamma -b^*)>p^{M-1}(p-1)b^*\,.$

The expression for $\bar {\c F}^0_{\gamma ,-\wt{N}}$ 
is a linear 
combination of the terms  
$$p^{n_1}[\dots [D_{a_1n_1},D_{a_2n_2}],\dots ,D_{a_rn_r}]\,,$$ 
where $M>n_1\geqslant n_2\geqslant 
\ldots \geqslant n_r\geqslant -\wt{N}$  
and $p^{n_1}a_1+\dots +p^{n_r}a_r=\gamma $. 
We can assume that all $a_i<(p-1)v_0$, because otherwise 
$\bar{\c F}^0_{\gamma ,-\wt{N}}\in\bar{\c L}(p)_{W(k)}$ and 
$\bar{\c F}^0_{\gamma ,-\wt{N}}=0$. 

Note that 
$\bar{\c F}^0_{\gamma ,-\wt{N}}$ can be presented as a linear 
combination of the terms 
$$[\dots [\bar{\c F}^0_{\gamma ', -(\wt{N}-1)},
D_{a_1',-\wt{N}}],\dots ,D_{a_r',-\wt{N}}]$$
where $\gamma =\gamma '+(a_1'+\dots +a_r')/p^{\wt{N}}$, 
$r\geqslant 0$, all $a'_i\in\Z ^0(p)$ 
and $\gamma '\in A[p-1,\wt{N}-1]$. 

If $a_1'+\dots +a_r'\geqslant (p-1)b^*/q$, i.e. 
it is $\geqslant (p-1)v_0$, then the whole term belongs to 
$\bar{\c L}(p)_{W(k)}=0$. 
Otherwise, the inequality 
$$b^*(p-1)/p^{\wt{N}}<q\gamma -b^*=
q\gamma '-b^*+q(a_1'+\dots +a'_r)/p^{\wt{N}}$$
implies that 
$q\gamma '-b^*>0$ and, therefore, $\gamma '\geqslant v_0$. 
If $\gamma '=\gamma (\iota ')$ for a suitable 
$\iota '\in\mathfrak{A}^+_1(p)$ then  
$\bar{\c F}^0_{\gamma ', -(\wt{N}-1)}\in\bar{\c L}^{[v_0]}$ 
and we are done. Otherwise, we should repeat 
the procedure with $\gamma '$ instead of $\gamma $. 

As a result we either prove our proposition or 
will come to the case where we have $\bar{\c F}_{\gamma ,0}$ with 
$q\gamma -b^*>p^{M-1}(p-1)b^*$. Proceed as earlier:

Let $\gamma =\gamma '+(a_1'+\ldots +a'_r)$, $r\geqslant 0$ 
and $v_p(\gamma ')\geqslant 1$. Then 
$$q\gamma -b^*=q\gamma '-b^*+q(a_1'+\ldots +a'_r)
>(p-1)b^*\,.$$
Again, if $a_1'+\ldots +a'_r\geqslant 
(p-1)v_0$ we are in $\bar{\c L}(p)_{W(k)}$. Otherwise, 
$q\gamma '-b^*>0$ implies $\gamma '\geqslant v_0$ and 
we have the following alternative: $\gamma '=\gamma (\iota ')$ 
with $\iota '\in\mathfrak{A}_1^+(p)$ or should repeat the procedure 
with $\bar{\c F}_{\gamma ', 1}\in p\bar {\c L}_{W(k)}$. It remains to note 
that $\bar{\c F}_{\gamma ,M}=0$. 
\end{proof}




\end{document}